\documentclass[11pt,dvipdfmx,oneside]{amsart}

\usepackage{amssymb,amsmath,amsthm}
\usepackage{hyperref}
\usepackage{tikz}
\usepackage{enumitem}
\usepackage{ascmac}
\usepackage{geometry}[margin = 20mm]

\setlist[enumerate,1]{label=(\arabic*)}

\setlist[enumerate,2]{label=(\theenumi.\arabic*), ref=\theenumi.\arabic*}

\theoremstyle{definition}
\newtheorem{theorem}{Theorem}[section]
\newtheorem{remark}[theorem]{Remark}
\newtheorem{lemma}[theorem]{Lemma}
\newtheorem{definition}[theorem]{Definition}
\newtheorem{corollary}[theorem]{Corollary}

\newtheorem{proposition}[theorem]{Proposition}
\newtheorem{question}[theorem]{Question}

\newcommand{\GL}{\mathsf{GL}}

\newcommand{\RCAo}{\mathsf{RCA}_0}
\newcommand{\WKLo}{\mathsf{WKL}_0}
\newcommand{\ACAo}{\mathsf{ACA}_0}

\newcommand{\hyp}{\mathchar`-}

\newcommand{\T}{\mathrm{T}}

\newcommand{\RFN}{\mathsf{RFN}}

\newcommand{\M}{\mathcal{M}}

\newcommand{\NN}{\mathcal{N}}
\newcommand{\ACA}{\mathsf{ACA}}

\newcommand{\GN}[1]{\ulcorner #1 \urcorner}

\renewcommand{\P}{\mathsf{P}}
\newcommand{\Q}{\mathsf{Q}}
\newcommand{\W}{\mathrm{W}}
\newcommand{\dom}{\mathrm{dom}}
\renewcommand{\a}{\mathrm{a}}
\renewcommand{\H}{\mathrm{H}}

\DeclareMathOperator{\Th}{\mathrm{Th}}
\DeclareMathOperator{\Tr}{\mathrm{Tr}}
\renewcommand{\Pr}{\mathrm{Pr}}

\newcommand{\PA}{\mathsf{PA}}
\newcommand{\LogAx}{\mathrm{LogAx}}

\title{On some generalizations of G\"{o}del's second incompleteness theorem}
\author{YUDAI SUZUKI}
\address{National~Institute~of~Technology, Oyama~College, Nakakuki~771, Oyama, Tochigi, Japan}
\email{yudai.suzuki.research@gmail.com}
\author{KEITA YOKOYAMA}
\address{Mathematical~Institute, Tohoku~University, Aramaki~Aza-Aoba~6-3, Aoba-ku, Sendai, Miyagi, Japan}
\email{keita.yokoyama.c2@tohoku.ac.jp}

\begin{document}

\maketitle

\begin{abstract}
In this note, we give some generalizations of G\"{o}del's second incompleteness theorem and study their surroundings. We revisit it from two perspectives.

One perspective is the relationship between the definable complexity of a theory and unprovability of its soundness. We clarify the relationship between this perspective and induction axioms.
We also determine the logical strength of Craig's trick, which is important for studying the definability of a theory, from the point of view of reverse mathematics.

The other perspective is semantic incompleteness. 
The second incompleteness theorem may be seen as the unprovability of the existence of a model.
It is known that `model' is replaced with `$\omega$-model' or `$\beta_n$-model'.
We give a new and unified proof of the $\omega$-model and $\beta_n$-model versions of the incompleteness theorem.
\end{abstract}

\section{Introduction}
In this note, we give two generalizations of the second incompleteness theorem and study their surroundings.

  The first one focuses on the relationship between the definability and the soundness of a theory.
  In the context of reflection principles, the second incompleteness can be written as follows.
  \begin{theorem}[The second incompleteness theorem]
    Let $T$ be a $\Sigma_1$-definable extension of $\mathsf{PA}$. Then, 
    $T$ is $\Pi_1$-sound if and only if $T \not \vdash \RFN_{\Pi_1}(T)$.
  \end{theorem}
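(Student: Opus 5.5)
The plan is to reduce both sides of the equivalence to plain consistency of $T$ and then apply Gödel's original second incompleteness theorem, $T\not\vdash \mathrm{Con}(T)$ for consistent $\Sigma_1$-definable $T\supseteq\PA$. Here $\RFN_{\Pi_1}(T)$ is the sentence $\forall \varphi\in\Pi_1\,(\Pr_T(\varphi)\to \Tr_{\Pi_1}(\varphi))$, where $\Tr_{\Pi_1}$ is the usual partial truth predicate for $\Pi_1$ formulas and $\Pr_T$ is built from the $\Sigma_1$ definition of $T$.

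\emph{Step 1: $T$ is $\Pi_1$-sound iff $T$ is consistent.} One direction is trivial: a $\Pi_1$-sound theory does not prove the false sentence $0\neq 0$. Conversely, suppose $T$ proves a false $\Pi_1$ sentence $\pi$. Then $\neg\pi$ is a true $\Sigma_1$ sentence. By $\Sigma_1$-completeness $\PA\vdash\neg\pi$, so $T$ proves both $\pi$ and $\neg\pi$ and is inconsistent.

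\emph{Step 2: $\PA\vdash \RFN_{\Pi_1}(T)\leftrightarrow \mathrm{Con}(T)$.}
\begin{itemize}
\item For $\to$, instantiate the reflection schema at $\varphi\equiv(0\neq0)$. Since $\PA\vdash\Tr_{\Pi_1}(\GN{0\neq 0})\leftrightarrow 0\neq0$, we get $\PA+\RFN_{\Pi_1}(T)\vdash\neg\Pr_T(\GN{0\neq0})$, which is $\mathrm{Con}(T)$.
\item For $\leftarrow$, argue inside $\PA$. Assume $\mathrm{Con}(T)$, and let $\varphi$ be $\Pi_1$ with $\Pr_T(\varphi)$ and $\neg\Tr_{\Pi_1}(\varphi)$. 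Then $\Tr_{\Sigma_1}(\neg\varphi)$ holds. Formalized $\Sigma_1$-completeness, uniform in the code, is $\PA\vdash\forall\sigma\in\Sigma_1\,(\Tr_{\Sigma_1}(\sigma)\to\Pr_{\PA}(\sigma))$. It yields $\Pr_{\PA}(\neg\varphi)$, hence $\Pr_T(\neg\varphi)$, because $\PA\subseteq T$ provably: each axiom of $\PA$ is recognized as a $T$-axiom. Combined with $\Pr_T(\varphi)$ this gives $\Pr_T(\GN{0\neq0})$, contradicting $\mathrm{Con}(T)$.
\end{itemize}

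\emph{Step 3: conclusion.}
\begin{itemize}
\item If $T$ is $\Pi_1$-sound, it is consistent. If it proved $\RFN_{\Pi_1}(T)$, then by Step 2 it would prove $\mathrm{Con}(T)$, contradicting Gödel's theorem.
\item If $T$ is not $\Pi_1$-sound, it is inconsistent by Step 1 and so proves everything, including $\RFN_{\Pi_1}(T)$.
\end{itemize}
This gives the stated equivalence.

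The main obstacle is Step 2's use of formalized $\Sigma_1$-completeness with a variable code $\sigma$ rather than a fixed sentence. This needs the standard construction of $\Tr_{\Sigma_1}$ and an induction inside $\PA$ on the build-up of $\Sigma_1$ formulas, using $\Sigma_1$-induction. I would cite this as the standard Hilbert--Bernays--Löb style fact rather than reprove it.
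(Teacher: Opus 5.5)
Your proof is correct, but it takes the classical level-one route rather than the paper's. The paper gives no separate proof of this statement. It is recovered as the case $k=0$ of Theorem~\ref{generalized incomp} (with $\Gamma=\Sigma_1$), combined with Lemma~\ref{Char of Soundness} and Theorem~\ref{Unprovability-Soundness}.

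\textbf{The paper's route.} For soundness $\Rightarrow$ unprovability, one takes finitely many axioms $\varphi_0,\dots,\varphi_{n-1}$ that derive the reflection sentence. Adding the true $\Sigma_1$ sentences $\sigma(\varphi_i)$ gives a finite theory $T_0$, which is consistent by soundness. One then shows $T_0\vdash\neg\Pr_{T_0}(0=1)$, so G\"{o}del's theorem is applied only to this finitely axiomatized $T_0$. For the converse, a provable false $\Pi_1$ sentence $\theta$ is used to build a new definition $\sigma'\land\neg\theta$ of $T$. Since $T$ proves this definition empty, $T$ proves reflection for it via $\Pi_n\hyp\RFN(\varnothing)$.

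\textbf{Your route.} You exploit what is special about $\Pi_1$: by $\Sigma_1$-completeness, $\Pi_1$-soundness is just consistency, and $\RFN_{\Pi_1}(T)$ implies $\mathrm{Con}(T)$. So you apply G\"{o}del's theorem to $T$ itself. The converse then becomes trivial and holds for every $\Sigma_1$-definition at once, since an inconsistent $T$ proves everything.

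\textbf{Comparison.} Your route is shorter and more elementary, but it does not extend. For a $\Sigma_{k+1}$-definition with $k\geq 1$, $\Pr_T$ is no longer $\Sigma_1$, so G\"{o}del's theorem cannot be applied to $T$ directly; Feferman-type definitions even make $\mathrm{Con}(T)$ provable. Also, $\Pi_{k+1}$-soundness is then strictly stronger than consistency. The paper's detours, through $T_0$ and through the definition $\sigma'\land\neg\theta$, are exactly what make its argument work at higher levels and in second-order arithmetic. The price is that its converse concerns only a specially constructed definition and needs $\Pi_n\hyp\RFN(\varnothing)$, i.e.\ enough induction.

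\textbf{A simplification.} Step~3 uses only the implication $\RFN_{\Pi_1}(T)\to\mathrm{Con}(T)$ from Step~2. The reverse implication, which you single out as the main obstacle, can simply be dropped. This also removes a shaky justification. A $\Sigma_1$-definition in the paper's sense only guarantees $\Th(T_\sigma)=\Th(T)$, so $\PA$-axioms need not be (provably, or even actually) axioms of $T_\sigma$. If you wanted that implication, you would route formalized $\Sigma_1$-completeness through the finitely axiomatized $\mathsf{Q}$ instead.
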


  An analogue of this theorem for $\Sigma_{k+1}$-definable theories is independently observed by Kikuchi-Kurahashi \cite[Theorem 5.6]{KikuchiKurahashi} and Chao-Seraji \cite{ChaoSeraji} as follows.
  \begin{theorem}\label{Kikuchi-Kurahashi}
    Let $T$ be a $\Sigma_{k+1}$-definable extension of $\mathsf{PA}$. 
    Then, $T$ is $\Pi_{k+1}$-sound if and only if $T \not \vdash \RFN_{\Pi_{k+1}}(T)$.
  \end{theorem}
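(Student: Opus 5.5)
We prove the two directions separately. The direction ``$T \vdash \RFN_{\Pi_{k+1}}(T)$ implies $T$ is not $\Pi_{k+1}$-sound'' is the substantive one; the converse is routine.

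\emph{Easy direction.} Suppose $T$ is not $\Pi_{k+1}$-sound, so some false $\Pi_{k+1}$ sentence $\pi$ is provable in $T$. Then $\neg\pi$ is a true $\Sigma_{k+1}$ sentence. Since $\PA$ is $\Sigma_{k+1}$-complete with respect to the partial truth predicate $\Tr_{\Sigma_{k+1}}$, we get $T \vdash \neg\pi$. So $T$ is inconsistent, proves everything, and in particular $T \vdash \RFN_{\Pi_{k+1}}(T)$. Hence $\Pi_{k+1}$-soundness follows from $T\not\vdash \RFN_{\Pi_{k+1}}(T)$.

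\emph{Main direction.} Assume $T$ is $\Pi_{k+1}$-sound and, towards a contradiction, $T\vdash \RFN_{\Pi_{k+1}}(T)$. Here $\RFN_{\Pi_{k+1}}(T)$ is the sentence $\forall \varphi\in\Pi_{k+1}\,(\Pr_T(\varphi)\to \Tr_{\Pi_{k+1}}(\varphi))$. The key observation is that $\Pr_T$ is $\Sigma_{k+1}$, since the axiom set of $T$ is $\Sigma_{k+1}$-definable and ``there is a proof all of whose axioms satisfy the $\Sigma_{k+1}$ formula'' stays $\Sigma_{k+1}$ by collection, which is available in $\PA$. Consequently the negation of $\RFN_{\Pi_{k+1}}(T)$ is $\Sigma_{k+1}$, so $\RFN_{\Pi_{k+1}}(T)$ is itself (equivalent in $\PA$ to) a $\Pi_{k+1}$ sentence.

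Now apply the diagonal lemma in $\PA$ to obtain a $\Pi_{k+1}$ sentence $\delta$ with
\[
\PA\vdash \delta\leftrightarrow \neg \Pr_T(\GN{\delta}).
\]
The right-hand side is $\Pi_{k+1}$ because $\Pr_T$ is $\Sigma_{k+1}$. We then argue inside $T$. Suppose $\neg\delta$; then $\Pr_T(\GN{\delta})$. Since $\delta$ is $\Pi_{k+1}$, $\RFN_{\Pi_{k+1}}(T)$ yields $\Tr_{\Pi_{k+1}}(\GN{\delta})$. The Tarski biconditional for $\Tr_{\Pi_{k+1}}$, provable in $\PA$, then gives $\delta$, a contradiction. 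Hence $T\vdash\delta$.

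Externally, $T\vdash\delta$ means $\Pr_T(\GN{\delta})$ is a true $\Sigma_{k+1}$ statement (truth of the $\Sigma_{k+1}$ definition in $\mathbb N$ matches actual provability). Therefore $\delta$ is a false $\Pi_{k+1}$ sentence provable in $T$, contradicting $\Pi_{k+1}$-soundness.

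An alternative route is to note that $T\vdash \RFN_{\Pi_{k+1}}(T)$ gives $T\vdash \neg\Pr_T(\ulcorner 0=1\urcorner)$ directly. However, plain consistency statements would then need the derivability conditions for a $\Sigma_{k+1}$ provability predicate. Those conditions may fail, because provable $\Sigma_1$-completeness does not lift to $\Sigma_{k+1}$ axiom sets. The diagonal argument above avoids derivability conditions entirely and uses only the truth predicate's Tarski clauses, so it is the route I would take.

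The main obstacle is carefully checking that $\Pr_T$ is $\Sigma_{k+1}$ in $\PA$, which needs $\Sigma_{k+1}$-collection to absorb the bounded quantifier over the axioms occurring in a proof. One also needs the partial truth predicates $\Tr_{\Pi_{k+1}}$ to satisfy the required Tarski biconditionals and $\Sigma_{k+1}$-completeness in $\PA$.
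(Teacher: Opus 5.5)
\textbf{The gap is in your ``easy'' direction.} $\PA$ is only $\Sigma_1$-complete, not $\Sigma_{k+1}$-complete for $k\ge 1$: the true $\Pi_1$, hence $\Sigma_2$, sentence $\mathrm{Con}(\PA)$ is unprovable. So a $\Pi_{k+1}$-unsound $T$ need not be inconsistent. For example, $T=\PA+\neg\mathrm{Con}(\PA)$ is consistent and proves the false $\Pi_2$ sentence $\neg\mathrm{Con}(\PA)$.

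Worse, take this $T$ with its standard $\Delta_0$ axiomatization $\tau$, which is a $\Sigma_2$-definition ($k=1$). Then $\RFN_{\Pi_2}(T_\tau)$ implies $\mathrm{Con}(T_\tau)$, which $T$ does not prove by G\"odel's theorem. So for a \emph{fixed} definition, the implication ``$T\not\vdash\RFN_{\Pi_{k+1}}(T)\Rightarrow T$ is $\Pi_{k+1}$-sound'' is simply false, and no argument of your kind can work.

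The statement has to be read as the paper restates it in Section~2: $T\not\vdash\RFN_{\Pi_{k+1}}(T_\sigma)$ for \emph{every} $\Sigma_{k+1}$-definition $T_\sigma$ of $T$. The missing idea is a deviant definition. Suppose $T\vdash\theta$ with $\theta$ a false $\Pi_{k+1}$ sentence, and let $\sigma'$ be any $\Sigma_{k+1}$-definition. Set $\sigma(x)\equiv\sigma'(x)\land\neg\theta$.
\begin{itemize}
\item It is still $\Sigma_{k+1}$, and it defines the same set in $\omega$ because $\neg\theta$ is true.
\item Yet $T\vdash\forall x\,\neg\sigma(x)$, so inside $T$, $\Pr_{T_\sigma}$ coincides with provability in pure logic.
\item $\RFN_{\Pi_{k+1}}(\varnothing)$ is provable in $\PA$, via $I\Sigma_{n+1}\leftrightarrow\Pi_{n+3}$-reflection for $\varnothing$.
\end{itemize}
Hence $T\vdash\RFN_{\Pi_{k+1}}(T_\sigma)$. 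This is Lemma~\ref{Char of Soundness} together with Theorem~\ref{Unprovability-Soundness}. Your argument is correct only for $k=0$, where $\Pi_1$-soundness is just consistency.

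\textbf{Your other direction is correct}, and it works for every fixed $\Sigma_{k+1}$-definition. Once $B\Sigma_{k+1}$ puts $\Pr_T$ in $\Sigma_{k+1}$ form, the diagonal sentence $\delta$ is $\Pi_{k+1}$. Reflection plus the Tarski clause then give $T\vdash\delta$, so $\delta$ is a false $\Pi_{k+1}$ theorem.

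The paper argues differently (Theorem~\ref{generalized incomp}). It takes the finitely many axioms $\varphi_i$ used to prove reflection and notes that $T_0=\{\varphi_i\}\cup\{\sigma(\varphi_i)\}$ is consistent by $\Pi_{k+1}$-soundness. It then shows $T_0$ would prove its own consistency, contradicting ordinary G\"odel II for a finite extension of $I\Sigma_1$.

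That route needs only $I\Sigma_1$, with no collection. It also dualizes to $\Pi_{k+1}$-definable theories with $\Sigma_{k+1}$-reflection, and to second-order classes, where your G\"odel sentence would land in the wrong class. Your route is more direct and avoids derivability conditions entirely.
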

  In this note, we give a refinement of this theorem.
  We consider extensions of $I\Sigma_n$ rather than $\mathsf{PA}$. By this refinement, we clarify the role of the induction axiom in this equivalence.
  We also give a second-order analogue of Theorem \ref{Kikuchi-Kurahashi}.

Our second generalization is an extension of the semantic incompleteness theorem.
By completeness theorem, the consistency of a theory is equivalent to the existence of a model of it. Thus, the second incompleteness theorem can be rephrased as follows for suitable theories.
\begin{quotation}
  If a theory $T$ has a model, then $T$ has a model of `$T$ has no model'.
\end{quotation}
In \cite{Steel}, Steel proved the following analogue of this formulation.
  \begin{theorem}[$\omega$-model incompleteness]
    Let $T$ be a $\Pi^1_1$-definable theory including $\ACA_0$.
    If $T$ has a coded $\omega$-model, then $T$ + `there is no coded $\omega$-model of $T$' also has a coded $\omega$-model.
  \end{theorem}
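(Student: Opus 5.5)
The plan is to argue by contradiction and turn the failure of the conclusion into an infinite descending sequence of coded $\omega$-models, each computing the Turing jump of the next. Such a sequence cannot exist; that fact is Steel's lemma on descending sequences in the Turing degrees, and proving it is the hard part.

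\emph{Step 1: reflection.} Suppose $T$ has a coded $\omega$-model, but $T+$`there is no coded $\omega$-model of $T$' has none. Then every coded $\omega$-model $M$ of $T$ satisfies `there is a coded $\omega$-model of $T$'. So there is some $N$ coded as a column of $M$ (hence $N\le_T M$) such that $M\models$ `$N$ is a coded $\omega$-model of $T$'.

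Next I check that this internal statement is true in reality. Write it as $\forall\varphi\,(\varphi\notin T\lor N\models\varphi)$. Because $T$ is $\Pi^1_1$, the clause $\varphi\notin T$ is $\Sigma^1_1$. Satisfaction $N\models\varphi$ is arithmetical in $N$, and $M\models\ACAo$ gives enough comprehension inside $M$ to evaluate it correctly. Whenever $M$ believes $\varphi\notin T$, the $\Sigma^1_1$ witness lies in $M$ and really is a witness, since $\Sigma^1_1$ statements true in an $\omega$-model are true.

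Hence $N$ is genuinely a coded $\omega$-model of $T$. Moreover $M\models\ACAo$ and $N\in M$, so $N'\in M$, which gives $N'\le_T M$.

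\emph{Step 2: build the sequence.} Let $S$ be the set of (codes of) coded $\omega$-models of $T$; by hypothesis $S\neq\emptyset$. Step 1 shows that every $M\in S$ has some $N\in S$ with $N'\le_T M$. By dependent choice (working in the real world) I obtain $M_0,M_1,\dots$ in $S$ with $M_{n+1}'\le_T M_n$ for every $n$.

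If the $\Sigma^1_1$ definability of $S$ is needed later, I record the sequence $(M_n)_n$ as a single real $X$, together with the reduction indices $e_n$ satisfying $M_{n+1}'=\Phi_{e_n}^{M_n}$. The set of such $X$ is $\Sigma^1_1$, so basis theorems can be applied to it.

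\emph{Step 3: the main obstacle.} It remains to show there is no sequence $(A_n)$ with $A_{n+1}'\le_T A_n$ for all $n$. This is exactly where the real work lies.

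I would first try the following argument. Fix such a sequence with indices $e_n$. Using the recursion theorem, uniformly in $n$, construct a set $B_n$ by asking the halting-problem oracle available to $A_n$ (namely $A_{n+1}'$) questions about $B_{n+1}$.

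The aim is to build a set $B_0\le_T A_0$ that diagonalises. Specifically, $A_0$ should be able to decide, for each $n$, a $\Sigma_1(A_{n+1})$ question whose answer is arranged to contradict the computation $\Phi_{e_0}^{A_0}$. Formally one shows by induction along the sequence that $A_0$ computes $A_n^{(n)}$ for every $n$, uniformly in the indices, and then lets $A_0$ decide its own halting problem, which is a contradiction.

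The delicate points are the following:
\begin{enumerate}
\item the indices $e_n$ are not uniformly available to $A_0$;
\item the composed reductions $A_n^{(n)}\le_T A_0$ have unbounded use, so $A_0$ cannot simply compose them.
\end{enumerate}
To overcome these I would use a fixed-point argument that guesses the indices together with a finite-injury style correction. If that fails, I would instead apply the Gandy basis theorem to the $\Sigma^1_1$ set of such sequences, so as to pick one with $\omega_1^{X}=\omega_1^{\mathrm{CK}}$. I would then compare the resulting ill-founded jump hierarchy along the sequence with the true hyperarithmetic hierarchy. That comparison gives a contradiction of the kind in Harrison's pseudo-hierarchy argument.
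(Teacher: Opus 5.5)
\textbf{There is a genuine gap: the lemma you reduce to in Step~3 is false.} Step~1 is correct. But the statement you set out to prove in Step~3 fails: there \emph{are} sequences $(A_n)_n$ with $A_{n+1}'\le_T A_n$ for all $n$. So no argument can complete Step~3.

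Such sequences come from Harrison's pseudo-hierarchies. The set of indices of recursive linear orders that carry a jump hierarchy is $\Sigma^1_1$ and contains every recursive well-order. The set of recursive well-orders is $\Pi^1_1$-complete, so the $\Sigma^1_1$ set must also contain some ill-founded order $\prec$. If $\langle H_b\rangle_b$ is a jump hierarchy on $\prec$ and $b_0\succ b_1\succ\cdots$, then $A_n=H_{b_n}$ satisfies $A_{n+1}'\le_T A_n$.

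With a little more care one even gets an infinite $\in$-descending chain of coded $\omega$-models of $\ACAo$, each certified as such by its predecessor. This is why the paper's well-foundedness result (Theorem~\ref{well-foundedness of hyperdegrees}) is stated only for $\beta$-models.

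So the sequence you build in Step~2 is not contradictory by itself. The sketch ``$A_0$ computes every $A_n^{(n)}$, hence decides its own halting problem'' contains no valid inference. The Gandy-basis/Harrison comparison points the wrong way, since pseudo-hierarchies are exactly what produce such sequences. What is contradictory is that \emph{every} coded $\omega$-model of $T$ internally certifies a submodel of $T$. Exploiting that requires a self-referential argument, or at least a much more uniform descending-sequence property than bare jump-descent. Steel's recursion-theoretic proof must likewise rely on more than the bare statement.

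The paper supplies the self-reference through L\"ob's theorem. Let $\Box x$ say that $x$ holds in every coded $\omega$-model of $T_\sigma$. Since $T\supseteq\ACAo$, these are exactly the coded $\beta_0$-models, and Lemma~\ref{box and beta models} shows that $\Box$ satisfies (D1)--(D3) over $\ACAo$.

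Your Step~1 is precisely the content of (D3). Argue inside $\ACAo$: if $\M$ is a coded $\omega$-model of $T_\sigma$ and $\M\models[\M'\models T_\sigma]$, then $\M'$ really is a model of $T_\sigma$. This holds because true instances $\sigma(\varphi)$ are $\Pi^1_1$ and so hold in $\M$, while ``$\M'\models\varphi$'' is arithmetical. Hence $\Box\varphi$ implies $\M\models\Box\varphi$.

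Formalized L\"ob (Theorem~\ref{Generalized Lob}) then gives $\ACAo\vdash\lnot\Box\bot\to\lnot\Box\lnot\Box\bot$. Reading this in $(\omega,2^\omega)$ is exactly the theorem. So keep Step~1, drop Steps~2--3, and use Step~1 to verify (D3).
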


  For this theorem, Steel gave a purely recursion theoretic proof based on a well-foundedness property of Turing jumps. 
  On the one hand, in Simpson's monograph \cite[Theorem VIII.5.6.]{Simpson} a proof-theoretic proof is given for  $\Sigma^0_1$-definable theories. This proof is based on the second incompleteness theorem, so one needs the assumption that $T$ is $\Sigma^0_1$-definable.
In \cite{MummertSimpson}, Mummert and Simpson proved a $\beta_n$-model version of Steel's incompleteness theorem (for the details, see Theorem 2.1 and Remark 2.2 in \cite{MummertSimpson}).
\begin{theorem}[$\beta_n$-model incompleteness]
    Let $T$ be a $\Sigma^0_1$-definable theory including $\ACA_0'$.
    If $T$ has a coded $\beta_n$-model, then $T$ + `there is no coded $\beta_n$-model of $T$' also has a coded $\beta_n$-model.
  \end{theorem}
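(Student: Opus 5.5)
We prove the $\beta_n$-model incompleteness theorem by reducing it to the ordinary second incompleteness theorem, applied to a suitable $\Sigma^0_1$-definable theory, along the lines of Simpson's proof of the $\omega$-model case. Fix $n$, let $T\supseteq\ACA_0'$ be $\Sigma^0_1$-definable, and let $\beta_n\mathrm{M}(T)$ be the sentence `there is a coded $\beta_n$-model of $T$'. We argue contrapositively. Suppose every coded $\beta_n$-model of $T$ satisfies $\beta_n\mathrm{M}(T)$; we show that $T$ then has no coded $\beta_n$-model.

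\textbf{Key steps.} First, consider the theory
\[
S := T + \{\sigma : \sigma \text{ a true } \Pi^1_{n}\text{ sentence}\}.
\]
Its intended use is to make $\Pi^1_n$ truth, rather than definability, do the work. Being a $\beta_n$-model is $\Pi^1_{n+1}$-expressible, since $\Sigma^1_n$ formulas must be absolute between the coded model and the universe. Hence the hypothesis ``every coded $\beta_n$-model of $T$ is a model of $\beta_n\mathrm{M}(T)$'' is itself a true $\Pi^1_{n+2}$ statement.

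That level is too high, so we refine the theory. Replace $S$ by $T$ together with a \emph{single} auxiliary predicate handled through a $\beta_n$-model reflection scheme. Concretely, work inside $\ACA_0'$ and use the standard fact that for a coded $\beta_n$-model $M$, the statement ``$M\models\varphi$'' is equivalent to $\varphi$ for $\Sigma^1_n$ sentences $\varphi$. Now take a coded $\beta_n$-model $M\models T$. By hypothesis $M\models\beta_n\mathrm{M}(T)$, so $M$ contains a code $M_1$ of a $\beta_n$-model of $T$ in the sense of $M$. Since $M$ is a $\beta_n$-model and ``$X$ is a $\beta_n$-model of $T$'' is $\Pi^1_{n}$ (for $n\ge1$, modulo $\ACA_0'$-provable equivalences), $M_1$ is a genuine $\beta_n$-model of $T$. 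Iterating gives a descending sequence $M\ni M_1\ni M_2\ni\cdots$ in which each $M_{i+1}$ is coded in $M_i$.

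Second, derive a contradiction from this sequence. The route I would use is the one in Steel's proof: $M_{i+1}\in M_i$ and $\ACA_0'$ force $M_{i+1}$ to be computable in some finite jump of a set in $M_i$, and in fact $M_i$ contains $\mathrm{TJ}^k(M_{i+1})$ for all $k$, using $\ACA_0'$ inside $M_i$. This yields a descending sequence in a well-founded relation on sets, namely ``$Y\le_T$ some $\mathrm{TJ}^k$ of $X$ but the $\omega$-jump of $Y$ is reducible to $X$''. The proof that this relation is well-founded would be Steel's jump-hierarchy argument, or, in the unified style promised in the introduction, a proof-theoretic argument. In the latter version we pick the least witness via a $\Sigma^1_{n+1}$ minimization available because $M$ is a $\beta_n$-model, and show that the ``least'' coded $\beta_n$-model would contain a smaller one.

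\textbf{Main obstacle.} The hardest step is this well-foundedness, i.e.\ ruling out the infinite descending chain. The plan is to use the Gödel-style route. Choose, by a minimality argument on the $\Sigma^0_1$ index of proofs, a $\beta_n$-model $M$ of $T$ such that the $\Pi^1_{n}$-theory of $M$ is included in $T+\neg\beta_n\mathrm{M}(T)$-consistency statements. Then apply the second incompleteness theorem to the $\Sigma^0_1$-definable theory $T+\beta_n\mathrm{M}(T)$: inside $M_1$, the existence of $M_2$ proves $\mathrm{Con}(T+\beta_n\mathrm{M}(T))$, and since $M_1$ reflects $\Sigma^1_n$ truths this consistency would hold in $M$ and be provable from $\beta_n\mathrm{M}(T+\beta_n\mathrm{M}(T))$, which contradicts the second incompleteness theorem. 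Making this formalization precise, so that the conclusion is provable in the theory rather than merely true, is where I expect the real work to be.
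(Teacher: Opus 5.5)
The proposal has a genuine gap: it never reaches a contradiction. Your chain $M\ni M_1\ni M_2\ni\cdots$ of genuine $\beta_n$-models of $T$ is obtained correctly. The problem is what you do with it.

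\textbf{Where the proposed routes fail.} The relation you appeal to is essentially $Y^{(\omega)}\le_{\mathrm{T}}X$, and it is not well-founded. Restrict a jump hierarchy along Harrison's ordering to points $a_0\succ a_1\succ\cdots$ of its ill-founded part, taken in different copies of $\omega_1^{\mathrm{CK}}$; this gives a descending sequence. In any case, $\mathrm{TJ}^k(M_{i+1})\in M_i$ for each $k$ would not give $M_{i+1}^{(\omega)}\le_{\mathrm{T}}M_i$. Worse, for $n=0$ such a chain is not contradictory at all. The same pseudohierarchies yield $\ni$-descending chains of coded $\omega$-models of $\ACA_0'$, each believed by its predecessor to be an $\omega$-model of $\ACA_0'$. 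So the \emph{universal} hypothesis has to be exploited, not merely the chain it produces. (For $n\ge1$ the chain could be refuted by the well-foundedness of $\ni$ on coded $\beta$-models, Theorem~\ref{well-foundedness of hyperdegrees}, but that is not the tool you name.) Your last paragraph does not close the gap either. The existence of $M_2$ inside $M_1$ only shows that $\mathrm{Con}(T+\beta_n\mathrm{M}(T))$ is \emph{true}, which the second incompleteness theorem does not forbid. Nothing shows that $T+\beta_n\mathrm{M}(T)$ \emph{proves} its own consistency, because a model of $T$ found inside that theory is not known to satisfy $\beta_n\mathrm{M}(T)$.

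\textbf{The missing idea.} It is the one you discarded at the outset because of a complexity miscount. Being a coded $\beta_n$-model of $T$ is $\Pi^1_n$ for $n\ge1$ (arithmetical for $n=0$). For $\Sigma^1_n$ formulas, the direction from $M\models\varphi$ to $\varphi$ already follows from the $\beta_{n-1}$ property; you use this bound yourself a few lines later. Hence the hypothesis $\chi$, ``every coded $\beta_n$-model of $T$ satisfies $\beta_n\mathrm{M}(T)$'', is a true $\Pi^1_{n+1}$ sentence, not $\Pi^1_{n+2}$. True $\Pi^1_{n+1}$ sentences hold in every coded $\beta_n$-model.

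Now let $U=T+\chi+\beta_n\mathrm{M}(T)$, which is $\Sigma^0_1$-definable since $T$ is. Reason in $U$ about the model $M$ supplied by $\beta_n\mathrm{M}(T)$:
\begin{itemize}
\item $M$ satisfies $T$;
\item $M$ satisfies $\chi$, by reflection of one fixed $\Pi^1_{n+1}$ sentence into a coded $\beta_n$-model, provable in $\ACAo$;
\item $M$ therefore satisfies $\beta_n\mathrm{M}(T)$, by $\chi$.
\end{itemize}
So $M\models U$ and $U\vdash\mathrm{Con}(U)$, whence $U$ is inconsistent by G\"{o}del's second theorem. But the given $\beta_n$-model $M_0$ of $T$ is a model of $U$ by the same reasoning carried out in the real world, which is a contradiction.

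This is essentially the G\"{o}del-based argument the paper attributes to Simpson and Mummert--Simpson. The paper itself avoids G\"{o}del's theorem for $T$. It instead verifies the derivability conditions over $\ACAo$ for $\Box x$ defined as ``$x$ holds in every coded $\beta_n$-model of $T$'', then applies formalized L\"{o}b. That route also covers $\Pi^1_{n+1}$-definable $T$.
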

Since Mummert and Simpson followed the proof in \cite[Theorem VIII.5.6.]{Simpson}, they assumed $T$ to be $\Sigma^0_1$-definable. 
Later, Lutz and Walsh \cite{LutzWalsh,LutzWalshCorrigenda} extended this result to $\Sigma^1_n$-definable theories\footnote{In fact, they pointed out that their proof can be applied to $\mathsf{A}(\Sigma^1_n)$-definable theories.
Here, a formula is $\mathsf{A}(\Sigma^1_n)$ if it is constructed from $\Sigma^1_n$-formulas by disjunction, conjunction, negation and number quantifiers.}. 
Their proof was based on the well-foundedness of hyperjumps, just as Steel's proof was based on a well-foundedness property of Turing jumps. So these proofs are very similar, but not identical.
In this note, we give a general method to prove both  Steel's result and that of Lutz and Walsh.

\section*{Acknowledgement}
The authors would like to thank Taishi Kurahashi and Koshiro Ichikawa for their fruitful comments on this study.
The first author also would like to thank Taishi Kurahashi for introducing him to the paper \cite{KikuchiKurahashi}, which motivated the present research.
The work of the first author is partially supported by JSPS KAKENHI grant number 26K17028.
The work of the second author is partially supported by JSPS KAKENHI grant numbers JP21KK0045, JP23K03193 and 26K00615.


\section{Soundness, reflection and definability}
In this section, we study the relationship between the definability and the soundness of a theory.
We first prove that a $\Gamma$-definable $\check{\Gamma}$-sound theory does not prove its $\check{\Gamma}$-soundness where $\check{\Gamma}$ is the dual of $\Gamma$. Then, we show that the converse holds in a sense. We also consider the logical strength of Craig's trick, which plays an important role in the study of the definability.

The unprovability of $\check{\Gamma}$-soundness was first observed by Kikuchi-Kurahashi\cite{KikuchiKurahashi} and Chao-Seraji\cite{ChaoSeraji} for extensions of $\mathsf{PA}$. In \cite{Walsh,TowsnerWalsh}, it was observed for some theories of second-order arithmetic. In \cite{TowsnerWalsh}, a referee gave a simple and general proof for these results. We introduce it.

We present some definitions.  Throughout this note, $\GN{\varphi}$ denotes the G\"{o}del number of a formula $\varphi$. We do not distinguish a formula $\varphi$, its G\"{o}del number $\GN{\varphi}$ and its numeral $\overline{\GN{\varphi}}$ if it is clear from the context. For instance, we sometimes write $\sigma(\varphi)$ or $\sigma(\GN{\varphi})$ instead of $\sigma(\overline{\GN{\varphi}})$ for formulas $\sigma(x)$ and $\varphi$.

\begin{definition} 
  In the following definitions, we consider formulas and theories of first-order arithmetic.
  \begin{itemize}
    \item Let $\sigma(x)$ be a formula. Then $T_{\sigma}$ denotes $\{\varphi : \omega \models \sigma(\varphi)\}$. We call $T_{\sigma}$ the theory defined by $\sigma$.
    \item For a theory $T$, $\Th(T)$ denotes the set of theorems of $T$. That is, $\Th(T) = \{\varphi : T \vdash \varphi\}$.
    \item Let $T$ be a theory,  $\Gamma$ be a class of formulas and $\sigma \in \Gamma$. If $\Th(T_{\sigma}) = \Th(T)$ and $\sigma \in \Gamma$, then we call $T_{\sigma}$ a $\Gamma$-definition of $T$. If $T$ has a $\Gamma$-definition, then we say $T$ is $\Gamma$-definable.
    \item A theory $T$ is $\Gamma$-sound if any sentence $\sigma \in \Gamma$ provable from $T$ is true in the standard model $\omega$.
  \end{itemize}
  By replacing the standard model $\omega$ with the intended model $(\omega,2^{\omega})$ of second-order arithmetic, we can define the definability and the soundness of theories of second-order arithmetic.
\end{definition}

\begin{definition}\label{Def of ref}
  Let $\sigma(x)$ be a formula and $\Gamma \in \{\Sigma_{k+1}, \Pi_{k+1} : k \in \omega\}$. 
  Then, $\Gamma\hyp\RFN(T_{\sigma})$ denotes the formula stating that `$T_{\sigma}$ is $\Gamma$-sound'. Formally, 
  \begin{align*}
    \Gamma\hyp\RFN(T_{\sigma}) \equiv \forall x \in \Gamma(\Pr_{T_{\sigma}}(x) \to \Tr_{\Gamma}(x)).
  \end{align*}
  Here, $\Pr_{T_{\sigma}}$ is the provability predicate for $T_{\sigma}$, and $\Tr_{\Gamma}$ is the $\Gamma$-truth predicate.
\end{definition}

Henceforth, $\check{\Gamma}$ denotes the dual of $\Gamma$. That is, if $\Gamma=\Sigma^0_k$, then $\check{\Gamma}=\Pi^0_k$. If $\Gamma=\Pi^0_k$, then $\check{\Gamma} =\Sigma^0_k$.

\begin{lemma}
  Let $\Gamma \in \{\Sigma_{k+1},\Pi_{k+1}: k \in \omega\}$. If a theory $T$ is $\check{\Gamma}$-sound, then $T + \{\theta \in \Gamma : \omega \models \theta\}$ is consistent.
\end{lemma}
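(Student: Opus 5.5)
Argue by contradiction and use compactness, together with the fact that $\Gamma$ is closed under finite conjunction up to logical equivalence. Suppose $T + \{\theta \in \Gamma : \omega \models \theta\}$ is inconsistent. By compactness, finitely many true sentences $\theta_1,\dots,\theta_m \in \Gamma$ already make it inconsistent with $T$, so
\[
T \vdash \neg(\theta_1 \wedge \dots \wedge \theta_m).
\]
Since $\Gamma \in \{\Sigma_{k+1},\Pi_{k+1}\}$ is closed under conjunction up to provable (indeed logical, after prenexing) equivalence, there is a single $\theta \in \Gamma$ with $\omega \models \theta$ and $T \vdash \neg\theta$; take $\theta$ to be the prenex normal form of $\theta_1 \wedge \dots \wedge \theta_m$. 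The case $m=0$, where $T$ itself is inconsistent, is covered by taking $\theta$ to be any true $\Gamma$-sentence such as $0=0$.

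Next, $\neg\theta$ is logically equivalent to a $\check{\Gamma}$-sentence $\theta'$, obtained by pushing the negation through the prenex quantifiers and the matrix. Hence $T \vdash \theta'$ with $\theta' \in \check{\Gamma}$. By $\check{\Gamma}$-soundness of $T$ we get $\omega \models \theta'$, i.e.\ $\omega \models \neg\theta$. This contradicts $\omega \models \theta$.

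The only point needing care is the bookkeeping about which formula classes are used. Both the prenexing of a conjunction of $\Gamma$-sentences and the dualization of a negated $\Gamma$-sentence should be purely logical equivalences, so that no induction or other arithmetic from $T$ is required. If the paper's $\Sigma_{k+1}$ and $\Pi_{k+1}$ are meant to allow bounded quantifiers in the matrix, the matrix remains $\Delta_0$ under negation and conjunction, so this still holds. Alternatively, one can read $\Gamma$ and $\check{\Gamma}$ as closed under logical equivalence.

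The same argument works verbatim for second-order arithmetic with $(\omega,2^\omega)$ in place of $\omega$.
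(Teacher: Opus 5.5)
Your proof is correct and is essentially the paper's argument. You use compactness to get finitely many true $\Gamma$-sentences, conjoin them into a single true $\Gamma$-sentence $\theta$ with $T \vdash \lnot\theta$, and observe that $\lnot\theta$ is then a false $\check{\Gamma}$-sentence provable in $T$; your extra bookkeeping (the case $m=0$ and the prenexing and dualization remarks) only makes explicit what the paper leaves implicit.
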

\begin{proof}
  For the sake of contradiction, assume that $T + \{\theta \in \Gamma : \omega \models \theta\}$ is inconsistent.
  Then there are $\theta_0,\ldots,\theta_{n-1} \in \Gamma$ such that $T + \{\theta_i : i < n\}$ is inconsistent. 
  Put $\theta \equiv \bigwedge_{i<n} \theta_i$. Then $\theta$ is still in $\Gamma$. In addition, $T \vdash \lnot \theta$ and $\omega \models \theta$. However, this is impossible because $T$ is $\check{\Gamma}$-sound.
\end{proof}

The following proof is pointed out by a referee in \cite[Theorem 2.4.]{TowsnerWalsh}.
\begin{theorem}\label{generalized incomp}
  Let $\Gamma \in \{\Sigma_{k+1},\Pi_{k+1}: k \in \omega\}$ and $\sigma(x) \in \Gamma$ such that $T_{\sigma} \supseteq I\Sigma_1$.
  If $T_{\sigma}$ is $\check{\Gamma}$-sound, then $T_{\sigma} \not \vdash \check{\Gamma}\hyp\RFN(T_{\sigma})$. 
\end{theorem}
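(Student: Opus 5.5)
We argue by contradiction, using the preceding lemma to obtain a consistent extension and then the ordinary second incompleteness theorem applied to it. Assume $T_{\sigma}$ is $\check{\Gamma}$-sound and $T_{\sigma} \vdash \check{\Gamma}\hyp\RFN(T_{\sigma})$. By the preceding lemma, the theory
\[
U := T_{\sigma} + \{\theta \in \Gamma : \omega \models \theta\}
\]
is consistent. The aim is to show that $U$ proves its own consistency in a form to which G\"{o}del's second incompleteness theorem applies, which is impossible.

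The key observation is that $U$ is axiomatized by the single formula
\[
\tau(x) :\equiv \sigma(x) \lor (x \in \Gamma \land \Tr_{\Gamma}(x)).
\]
Since $\sigma \in \Gamma$ and $\Tr_{\Gamma}$ is itself a $\Gamma$-formula, $\tau$ is (equivalent over $I\Sigma_1$ to) a $\Gamma$-formula. Moreover, whenever $\sigma(\GN{\varphi})$ holds, the sentence $\sigma(\GN{\varphi})$ is a true $\Gamma$-sentence. Hence $U$ proves $\sigma(\GN{\varphi})$ for every axiom $\varphi$ of $T_{\sigma}$, and likewise $U \vdash \Tr_{\Gamma}(\GN{\theta})$ for every true $\Gamma$-sentence $\theta$; the latter holds because $U \vdash \theta$ and $I\Sigma_1$ proves the Tarski biconditional $\theta \leftrightarrow \Tr_{\Gamma}(\GN{\theta})$. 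So $U$ proves $\tau(\GN{\psi})$ for each of its axioms $\psi$, which is the $\Gamma$-analogue of $\Sigma_1$-completeness for the axiom predicate.

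Next we show that $U$ proves $\mathrm{Con}(T_\tau)$. Reason inside $U$. Suppose $T_\tau$ is inconsistent, witnessed by a proof using finitely many axioms from $T_\sigma$ and finitely many true $\Gamma$-sentences $\theta_0,\dots,\theta_{n-1}$. Then $T_\sigma \vdash \lnot\bigwedge_i\theta_i$. Since $\lnot\bigwedge\theta_i$ is a $\check{\Gamma}$-sentence, the hypothesis $\check{\Gamma}\hyp\RFN(T_{\sigma})$, which $U$ proves, gives $\Tr_{\check{\Gamma}}(\lnot\bigwedge\theta_i)$. On the other hand, $\Tr_\Gamma(\theta_i)$ holds for each $i$, and by compositional properties of partial truth predicates, provable in $I\Sigma_1$, we get $\Tr_\Gamma(\bigwedge\theta_i)$, a contradiction. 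If $n=0$, we use $\check{\Gamma}$-reflection on $0=1$. Hence $U \vdash \mathrm{Con}(T_\tau)$ with $T_\tau$ a $\Gamma$-definition of $U$.

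The main obstacle is the final step: the standard second incompleteness theorem requires a $\Sigma_1$ axiom predicate, while $\tau$ is only $\Gamma$. We expect to handle this with the usual diagonal argument, relativized. Take the G\"{o}del sentence $G \leftrightarrow \lnot\Pr_{T_\tau}(\GN{G})$. The derivability conditions $D1$ and $D2$ hold for $\Pr_{T_\tau}$ in $I\Sigma_1$ for any formula $\tau$. For $D3$, the needed instance is $U \vdash \Pr_{T_\tau}(\GN{G}) \to \Pr_{T_\tau}(\GN{\Pr_{T_\tau}(\GN{G})})$. Formalized $\Sigma_1$-completeness must be applied to $\Pr_{T_\tau}(\GN{G})$, which is only $\Sigma_1$ relative to $\tau$. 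What saves us is that every instance $\tau(a)$ actually used is a true $\Gamma$-sentence, and $U$ proves all of these; by the argument in the second paragraph, $U$ in fact proves $\forall a(\tau(a) \to \Pr_{T_\tau}(\GN{\tau(\dot a)}))$, since $\tau(\dot a)$ is a $\Gamma$-sentence that is ``true'' and hence itself an axiom. With this uniform version, $D3$ follows by the standard induction on proofs. L\"{o}b's argument then yields $U \vdash \lnot\mathrm{Con}(T_\tau)$, contradicting the consistency of $U$. Alternatively, if the paper has a Kikuchi--Kurahashi style lemma available, one could cite it here.
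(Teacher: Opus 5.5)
Your key ingredient is right: adding true $\Gamma$-sentences lets the theory prove $\sigma(\GN{\varphi})$ for its own $T_\sigma$-axioms, and these are exactly the sentences the paper adds. But the step ``$U\vdash\mathrm{Con}(T_\tau)$'' is not justified once $k\ge 1$.

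\textbf{The gap.} Reasoning inside $U$, a $T_\tau$-proof of $0=1$ may be nonstandard and use nonstandardly many axioms. Your argument then needs two things.
\begin{itemize}
\item[(i)] A coded splitting of these axioms into those satisfying $\sigma$ and those satisfying $x\in\Gamma\land\Tr_\Gamma(x)$. This is a bounded separation principle for $\Gamma$-formulas.
\item[(ii)] The passage from $\forall i<n\,\Tr_\Gamma(\theta_i)$ to the truth of (a prenex form of) the nonstandard conjunction $\bigwedge_{i<n}\theta_i$, and dually extracting a false conjunct from $\Tr_{\check{\Gamma}}(\lnot\bigwedge_{i<n}\theta_i)$. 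This requires collecting witnesses: essentially $B\Sigma_{k+1}$ for $\Gamma=\Sigma_{k+1}$ and $B\Sigma_k$ for $\Gamma=\Pi_{k+1}$.
\end{itemize}
Neither is a ``compositional property provable in $I\Sigma_1$''. $U$ is only guaranteed to contain $I\Sigma_1$ plus true $\Gamma$-sentences, and these do not yield such principles in general. This is the phenomenon behind the paper's lemma on Craig's trick, where handling $\Sigma_{k+1}$-axiom sets inside a proof costs exactly $B\Sigma_{k+1}$. For $\Gamma\in\{\Sigma_1,\Pi_1\}$ your steps are available in $I\Sigma_1$, but not beyond that.

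\textbf{The D3 step.} Your patch is sound in substance, but not via ``the standard induction''. That induction would be on formulas $\Pr_{T_\tau}(\cdot)$, which contain $\tau$ under a bounded quantifier and are not $\Sigma_1$, so $I\Sigma_1$ does not license it. You must instead build the proof of $\Pr_{T_\tau}(\GN{\Pr_{T_\tau}(\GN{\varphi})})$ by an explicit primitive recursive transformation of the given proof, invoking your uniform observation at each axiom position.

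\textbf{How the paper avoids this.} It applies compactness outside the theory. Fix standard $\varphi_0=A_{I\Sigma_1},\varphi_1,\dots,\varphi_{n-1}\in T_\sigma$ with $\{\varphi_i:i<n\}\vdash\check{\Gamma}\hyp\RFN(T_\sigma)$, and let $T_0=\{\varphi_i:i<n\}\cup\{\sigma(\varphi_i):i<n\}$.
\begin{itemize}
\item By the preceding lemma, $T_0$ is a consistent finite extension of $I\Sigma_1$.
\item Its axiom predicate is $\Delta_0$, so the ordinary second incompleteness theorem applies and no D3 issue arises.
\item Inside $T_0$ one only needs reflection for the single standard $\check{\Gamma}$-sentence $\bigvee_{i<n}\lnot\sigma(\varphi_i)$: $\Pr_{T_0}(0=1)\to\Pr_{T_\sigma}(\bigvee_{i<n}\lnot\sigma(\varphi_i))\to\bigvee_{i<n}\lnot\sigma(\varphi_i)$, contradicting the axioms $\sigma(\varphi_i)$.
\end{itemize}
Replacing your $U$ by this finite $T_0$ repairs the proof.
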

\begin{proof}
  We may assume that $T_{\sigma}$ contains a formula $A_{I\Sigma_1}$ which axiomatizes $I\Sigma_1$.

  For the sake of contradiction, assume $T_{\sigma} \vdash \check{\Gamma}\hyp\RFN(T_{\sigma}) $. 
  Then, there are $\varphi_0,\ldots,\varphi_{n-1} \in T_{\sigma}$ such that $\varphi_0 \equiv A_{I\Sigma_1}$ and 
  $\{\varphi_i : i < n\} \vdash \check{\Gamma}\hyp\RFN(T_{\sigma})$.

  We note that $T_0 = \{\varphi_i : i < n\} \cup \{\sigma(\varphi_i) : i < n\}$ is consistent because $T_0$ is a subset of $T + \{\theta \in \Gamma : \omega \models \theta\}$. Therefore, $T_0$ is a finite consistent extension of $I\Sigma_1$. In particular, $T_0$ does not prove its consistency. 

  By definition of $T_0$, it believes that $\{\varphi_i : i < n\}$ is a subtheory of $T_{\sigma}$. Thus,
  within $T_0$, 
  \begin{align*}
    \Pr_{T_0}(0=1) &\to \Pr_{\varphi_0,\ldots,\varphi_{n-1}}(\lnot \sigma(\varphi_0) \lor \cdots \lor \lnot \sigma(\varphi_{n-1})) && (T_0 = \{\varphi_i : i < n\} \cup \{\sigma(\varphi_i) : i < n\}) \\
    &\to \Pr_{T_{\sigma}}(\lnot \sigma(\varphi_0) \lor \cdots \lor \lnot \sigma(\varphi_{n-1})) && (\varphi_i \in T_{\sigma} \text{ for all } i < n) \\
    &\to \lnot \sigma(\varphi_0) \lor \cdots \lor \lnot \sigma(\varphi_{n-1}) && (\check{\Gamma}\hyp\RFN(T_{\sigma})).
  \end{align*}
  Since $T_0 \vdash \sigma(\varphi_0) \land \cdots \land \sigma(\varphi_{n-1})$, $T_0 \vdash \lnot \Pr_{T_0}(0=1)$. This means that $T_0$ proves its own consistency, a contradiction.
\end{proof}

\begin{remark}
  By the same argument, we can prove the previous theorem for $\Gamma \in \{\Sigma^0_{k+1},\Pi^0_{k+1}: k \in \omega\}$ if $T_{\sigma} \supseteq \RCAo$, and for $\Gamma \in \{\Sigma^1_{k+1},\Pi^1_{k+1} : k \in \omega\}$ if $T_{\sigma} \supseteq \ACAo$. In the latter case, $\ACAo$ is needed for defining a truth predicate for $\check{\Gamma}$-formulas. Here, a $\Sigma^1_1$ truth predicate means a truth predicate for formulas of the form $\exists X \Pi^0_2$. For simplicity, we write $\Sigma^1_1\hyp\RFN$ to mean $(\exists X \Pi^0_2)\hyp\RFN$. We use a similar notation for other classes.
\end{remark}

As in the previous theorem, the soundness of a theory implies the unprovability of the soundness. Conversely, the unprovability of the soundness implies the soundness of a theory. 
It is proved in \cite[Theorem 5.6]{KikuchiKurahashi} that for any $\Sigma_{k+1}$-definable extension $T$ of $\PA$, the following are equivalent.
  \begin{enumerate}
    \item $T$ is $\Pi_{k+1}$-sound,
    \item for any $\Sigma_{k+1}$-definition $T_{\sigma}$ of $T$, $T \not \vdash \Pi_{k+1}\hyp\RFN(T_{\sigma})$,
    \item for any $\Sigma_{k+1}$-definition $T_{\sigma}$ of $T$, $T \not \vdash \RFN(T_{\sigma})$.
  \end{enumerate}
We give a more detailed observation.

\begin{lemma}\label{Char of Soundness}
  Let $T$ be a $\Sigma_{k+1}$-definable extension of $I\Sigma_1$. Then the following are equivalent.
  \begin{enumerate}
    \item $T$ is $\Pi_{k+1}$-sound,
    \item for any $\Sigma_{k+1}$-definition $T_{\sigma}$ of $T$, $T \not \vdash T_{\sigma} = \varnothing$.
  \end{enumerate} 
\end{lemma}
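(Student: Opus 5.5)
Both directions go through the formula $T_\sigma = \varnothing$, i.e.\ $\forall x\, \lnot\sigma(x)$, which is $\Pi_{k+1}$ because $\sigma\in\Sigma_{k+1}$.

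\emph{(1)$\Rightarrow$(2).} Suppose $T$ is $\Pi_{k+1}$-sound and $T_\sigma$ is a $\Sigma_{k+1}$-definition of $T$. If $T \vdash \forall x\,\lnot\sigma(x)$, then soundness makes this sentence true in $\omega$. So $T_\sigma=\varnothing$ in reality, hence $\Th(T)=\Th(\varnothing)$, which is just the set of logically valid sentences. But $T$ extends $I\Sigma_1$, so $T$ proves some non-valid sentence, e.g.\ $\forall x\,(x+1\neq 0)$, a contradiction.

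\emph{(2)$\Rightarrow$(1).} I argue by contraposition. Suppose $T$ is not $\Pi_{k+1}$-sound. Then there is a $\Pi_{k+1}$ sentence $\pi$ with $T\vdash\pi$ and $\omega\models\lnot\pi$. Fix any $\Sigma_{k+1}$-definition $T_\tau$ of $T$, and set
\[
  \sigma(x) \equiv \tau(x) \land \lnot\pi .
\]
Then $\lnot\pi$ is $\Sigma_{k+1}$, so $\sigma$ is $\Sigma_{k+1}$ up to the usual prenex manipulations, which are available over the weak base theory used to verify complexity. Since $\omega\models\lnot\pi$, we get $T_\sigma=T_\tau$ as sets, so $T_\sigma$ is also a $\Sigma_{k+1}$-definition of $T$. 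Internally, $T\vdash\pi$ gives $T\vdash \forall x\,\lnot\sigma(x)$, i.e.\ $T\vdash T_\sigma=\varnothing$. This violates (2).

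The main point to check is that $\tau(x)\land\lnot\pi$ really is (equivalent in the relevant sense to) a $\Sigma_{k+1}$ formula. This is immediate after pulling quantifiers out, since $x$ does not occur in $\pi$. If definitions must be literally in $\Sigma_{k+1}$ normal form, I would take the prenex normal form, whose equivalence is purely logical. Note that the weak hypothesis $I\Sigma_1$ is used only to guarantee that $T$ proves some non-valid sentence; no reflection or incompleteness theorem is needed for this lemma.
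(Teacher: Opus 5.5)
Your proof is correct and is essentially the paper's. In (2)$\Rightarrow$(1) you use exactly the same modified definition $\tau(x)\land\lnot\pi$. In (1)$\Rightarrow$(2) you apply $\Pi_{k+1}$-soundness to $\forall x\,\lnot\sigma(x)$ itself, whereas the paper argues contrapositively via the false instance $\lnot\sigma(\varphi)$ for an axiom $\varphi$; this is an inessential variation. It does have the small merit of making explicit why $T_\sigma\neq\varnothing$ (because $T\supseteq I\Sigma_1$), which the paper's ``take a formula $\varphi\in T$'' uses tacitly.
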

\begin{proof}
$(1) \Rightarrow (2)$ We show the contraposition. Assume there is a $\Sigma_{k+1}$-definiton $T_{\sigma}$ of $T$ such that $T \vdash T_{\sigma} = \varnothing$. Take a formula $\varphi \in T$.
  Then $\omega \models \sigma(\varphi)$ but $T \vdash \lnot \sigma(\varphi)$. Thus, $\lnot \sigma(\varphi)$ is a $\Pi_{k+1}$-formula provable in $T$ but not true.

 $(2) \Rightarrow (1)$ We show the contraposition. Assume $T$ is not $\Pi_{k+1}$-sound. 
 Let $\theta$ be a $\Pi_{k+1}$-formula such that $T \vdash \theta$ but $\theta$ is not true.
 Take a $\Sigma_{k+1}$-definiton $\sigma'(x)$ of $T$ and 
 put $\sigma(x) \equiv \sigma'(x) \land \lnot \theta$. Then
 \begin{itemize}
   \item $\sigma$ is $\Sigma_{k+1}$ because both of $\sigma',\lnot \theta$ are $\Sigma_{k+1}$.
   \item $\omega \models T_{\sigma'} = T$ because $\lnot \theta$ is true.
   \item $T \vdash T_{\sigma} = \varnothing$ because $T \vdash \theta$.
 \end{itemize}
 Thus, $T_{\sigma}$ is a $\Sigma_{k+1}$-definition of $T$ such that $T \vdash T_{\sigma} = \varnothing$.
\end{proof}

\begin{theorem}\label{Unprovability-Soundness}
  Let $T$ be a $\Sigma_{k+1}$-definable extension of $I\Sigma_1$. Assume there is an $n$ such that for any $\Sigma_{k+1}$-definition $T_{\sigma}$ of $T$, $T + \Pi_n\hyp\RFN(\varnothing) \not \vdash \Pi_n\hyp\RFN(T_{\sigma})$. Then, $T$ is $\Pi_{k+1}$-sound.
\end{theorem}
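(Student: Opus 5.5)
We argue by contraposition, using Lemma~\ref{Char of Soundness}. Suppose $T$ is not $\Pi_{k+1}$-sound. By the direction $(2)\Rightarrow(1)$ of Lemma~\ref{Char of Soundness} (more precisely, its proof), there is a $\Sigma_{k+1}$-definition $T_{\sigma}$ of $T$ with $T \vdash T_{\sigma} = \varnothing$. Concretely, take a false $\Pi_{k+1}$ sentence $\theta$ with $T\vdash\theta$ and put $\sigma(x)\equiv\sigma'(x)\land\lnot\theta$. We then show, for every $n$, that
\[
T + \Pi_n\hyp\RFN(\varnothing) \vdash \Pi_n\hyp\RFN(T_{\sigma}).
\]
This contradicts the hypothesis, which requires a single $n$ working for all definitions.

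The key steps, in order, are as follows. First, reason inside $T + \Pi_n\hyp\RFN(\varnothing)$ and derive $T_{\sigma}=\varnothing$, i.e.\ $\forall x\,\lnot\sigma(x)$; this holds since $T\vdash\theta$ and $\theta$ gives $\lnot\sigma(x)$ for all $x$. Second, observe that under $T_\sigma=\varnothing$ the provability predicate $\Pr_{T_{\sigma}}(x)$ is equivalent to $\Pr_{\varnothing}(x)$. A $T_\sigma$-proof is a logical derivation whose nonlogical premises $\psi$ all satisfy $\sigma(\psi)$; since no such $\psi$ exists, every $T_\sigma$-proof is a pure logical proof. This step is a simple $\Delta_0$-style induction or just an inspection of the definition of proof, and is available in $I\Sigma_1$. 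Third, combine these: for any $x\in\Pi_n$ with $\Pr_{T_\sigma}(x)$ we get $\Pr_{\varnothing}(x)$, and then $\Pi_n\hyp\RFN(\varnothing)$ yields $\Tr_{\Pi_n}(x)$. Hence $\Pi_n\hyp\RFN(T_\sigma)$ holds.

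The main obstacle is to make sure that the formalized provability predicate really behaves as described. That is, $\Pr_{T_\sigma}$ must be defined as ``there is a derivation from sentences satisfying $\sigma$'', and $T$ (an extension of $I\Sigma_1$) must prove that such a derivation with an empty axiom set is an $\varnothing$-derivation. I expect no obstruction from the complexity of $\sigma$: we never need to evaluate $\sigma$ beyond using $\forall x\,\lnot\sigma(x)$, which $T$ proves outright. A second point to check is that $\Pi_n\hyp\RFN(\varnothing)$ is stated with the same truth predicate $\Tr_{\Pi_n}$ as in Definition~\ref{Def of ref}, so the final step is immediate. Note also that the case $n\le k+1$ versus $n>k+1$ needs no separate treatment, since the argument is uniform in $n$.
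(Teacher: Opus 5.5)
Your proof is correct and follows the paper's argument: contraposition through the $(2)\Rightarrow(1)$ direction of Lemma~\ref{Char of Soundness}, which yields a $\Sigma_{k+1}$-definition $T_\sigma$ with $T \vdash T_\sigma = \varnothing$, and then the observation that $T + \Pi_n\hyp\RFN(\varnothing) \vdash \Pi_n\hyp\RFN(T_\sigma)$ for every $n$. You also spell out why that last step holds (when $T_\sigma=\varnothing$, every $T_\sigma$-proof is a purely logical proof, so $\Pr_{T_\sigma}(x)$ gives $\Pr_{\varnothing}(x)$), a detail the paper leaves implicit.
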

\begin{proof}
Let $n,k$ and $T$ be as above.
Assume $T$ is not $\Pi_{k+1}$-sound for the sake of contradiction. Then, there is a $\Sigma_{k+1}$-definiton $T_{\sigma}$ of $T$ such that $T \vdash T_{\sigma} = \varnothing$ by Lemma \ref{Char of Soundness}. Now $T + \Pi_n\hyp\RFN(\varnothing) \vdash \Pi_n\hyp\RFN(T_{\sigma})$, a contradiction.
\end{proof}

Let $T$ be a $\Sigma_k$-definable theory.
For $n \in \omega$, put $P_k(n)$ as the following condition.
\begin{quote}
  $P_k(n)$: for all $\Sigma_k$-definition $T_{\sigma}$ of $T$, $T \not \vdash \Pi_n\hyp\RFN(T_{\sigma})$.
\end{quote}

The following is a refinement of \cite[Theorem 5.6]{KikuchiKurahashi}.
\begin{corollary}
  Let $T$ be a $\Sigma_{k+1}$-definable extension of $I\Sigma_{1}$. 
  Then, for each $k$, we have
  \begin{align*}
     &P_{k+1}(1) \Rightarrow P_{k+1}(2) \Rightarrow  P_{k+1}(3) \Rightarrow \cdots  \\
     &P_{k+1}(n+3) \Rightarrow T \text{ is $\Pi_{k+1}$-sound}\;\; \text{  if $T \supseteq I\Sigma_{n+1}$} \\
     &T \text{ is $\Pi_{k+1}$-sound} \Rightarrow P_{k+1}(k+1)
  \end{align*}
\end{corollary}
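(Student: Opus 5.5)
Three claims need proof. The first is the chain $P_{k+1}(n)\Rightarrow P_{k+1}(n+1)$. The second is $P_{k+1}(n+3)\Rightarrow$ $\Pi_{k+1}$-soundness when $T\supseteq I\Sigma_{n+1}$. The third is soundness $\Rightarrow P_{k+1}(k+1)$.

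\textbf{The chain.} I argue by contraposition. Suppose $T\vdash \Pi_{n+1}\hyp\RFN(T_\sigma)$ for some $\Sigma_{k+1}$-definition $T_\sigma$. Every $\Pi_n$-formula is (provably in $I\Sigma_1$) a $\Pi_{n+1}$-formula. Moreover $I\Sigma_1$ proves $\Tr_{\Pi_{n+1}}(x)\to\Tr_{\Pi_n}(x)$ for $x\in\Pi_n$, using the usual compatibility of the partial truth predicates. Hence $T\vdash\Pi_n\hyp\RFN(T_\sigma)$, so $P_{k+1}(n)$ fails.

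\textbf{The last implication.} $\Pi_{k+1}\hyp\RFN$ is $\check{\Gamma}\hyp\RFN$ for $\Gamma=\Sigma_{k+1}$. Every $\Sigma_{k+1}$-definition $T_\sigma$ of $T$ has $\Th(T_\sigma)=\Th(T)\supseteq I\Sigma_1$ and is $\Pi_{k+1}$-sound. Theorem~\ref{generalized incomp} therefore gives $T_\sigma\not\vdash\Pi_{k+1}\hyp\RFN(T_\sigma)$, i.e.\ $T\not\vdash\Pi_{k+1}\hyp\RFN(T_\sigma)$. One point needs care: the proof of Theorem~\ref{generalized incomp} assumes $A_{I\Sigma_1}\in T_\sigma$. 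I handle this by replacing $\sigma$ with $\sigma(x)\lor x=\GN{A_{I\Sigma_1}}$, which is still $\Sigma_{k+1}$ and defines the same theorems. I then check that the argument with the added axiom still yields the claim for the original $\sigma$. Alternatively I rerun that proof directly, picking finitely many $\varphi_i\in T_\sigma$ that prove $A_{I\Sigma_1}$.

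\textbf{The middle implication.} I use Theorem~\ref{Unprovability-Soundness} with parameter $n+3$: it suffices that $T+\Pi_{n+3}\hyp\RFN(\varnothing)\not\vdash\Pi_{n+3}\hyp\RFN(T_\sigma)$ for all definitions. Assume $P_{k+1}(n+3)$, and suppose toward a contradiction that $T+\Pi_{n+3}\hyp\RFN(\varnothing)\vdash\Pi_{n+3}\hyp\RFN(T_\sigma)$. The key fact is that $I\Sigma_{n+1}$ proves $\Pi_{n+3}\hyp\RFN(\varnothing)$, i.e.\ reflection for pure logic. To prove this I formalize cut-elimination or a Herbrand/partial-truth soundness argument for cut-free proofs of $\Pi_{n+3}$-sentences, which needs only induction on a $\Pi_{n+2}$ or $\Sigma_{n+2}$ truth predicate restricted to bounded-complexity formulas. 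Since $I\Sigma_{n+1}$ is equivalent to $I\Pi_{n+1}$, I will aim to get the induction down to $\Sigma_{n+1}$ or $\Pi_{n+1}$ by exploiting the subformula property: all formulas in a cut-free proof of a $\Pi_{n+3}$ sentence lie in $\Sigma_{n+3}\cup\Pi_{n+3}$, and the inductive claim about sequents can be phrased with one fewer alternating block.

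Granting this fact, $T\supseteq I\Sigma_{n+1}$ yields $T\vdash\Pi_{n+3}\hyp\RFN(T_\sigma)$, contradicting $P_{k+1}(n+3)$. So $T$ is $\Pi_{k+1}$-sound.

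\textbf{Main obstacle.} Pinning down the exact induction needed for logical reflection, namely that $I\Sigma_{n+1}\vdash\Pi_{n+3}\hyp\RFN(\varnothing)$, is the crux, and the offset $3$ is presumably calibrated to it. One subtlety is cut elimination itself, which is superexponential. I would avoid it by working with a sequent calculus whose soundness is proved by induction on proof length. The inductive statement would be ``every sequent in the proof is true under all assignments, as witnessed by a partial truth predicate.'' This is where the two extra quantifier levels are consumed.
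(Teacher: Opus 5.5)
Your argument is the paper's argument. The chain is monotonicity of partial reflection, which the paper calls trivial. The last implication is Theorem~\ref{generalized incomp} with $\Gamma=\Sigma_{k+1}$. The middle implication is Theorem~\ref{Unprovability-Soundness} at level $n+3$ together with $I\Sigma_{n+1}\vdash\Pi_{n+3}\hyp\RFN(\varnothing)$; the paper does not prove this fact but quotes it as the known characterization of $I\Sigma_m$ by $\Pi_{m+2}$-reflection (Leivant, Ono).

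On the side issue of $A_{I\Sigma_1}\in T_\sigma$, use your second option: rerun the proof with finitely many $\varphi_i\in T_\sigma$ that derive $A_{I\Sigma_1}$. This works and is more careful than the paper's ``we may assume''. Your first option would need an extra argument that $T$ derives $\Pi_{k+1}\hyp\RFN$ for the enlarged definition from $\Pi_{k+1}\hyp\RFN(T_\sigma)$. That is not automatic, because $T$ need not prove the true $\Sigma_{k+1}$ sentences $\sigma(\varphi_i)$.

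The one place your plan would actually fail is your own sketch of the key fact, so either cite it or repair the sketch as follows.

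\textbf{Cut elimination cannot be avoided.} A proof with cuts of a $\Pi_{n+3}$ sentence may contain cut formulas of unbounded (nonstandard) complexity. No partial truth predicate $\Tr_{\Pi_j}$ covers all its sequents, so your induction on proof length cannot even be stated. The subformula property you invoke holds only for cut-free proofs.

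\textbf{There is no need to avoid it.} The superexponential blow-up is primitive recursive and provably total in $I\Sigma_1$. So formalized cut elimination is available in $I\Sigma_1\subseteq I\Sigma_{n+1}$.

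\textbf{The quantifier levels are consumed differently.} Even for cut-free proofs, the naive claim ``every sequent is true under every assignment'' is roughly $\Pi_{n+3}$, not $\Pi_{n+1}$. The two extra levels are removed as follows.
\begin{enumerate}
\item Instantiating the outer universal quantifier reduces the task to showing that a cut-free derivable $\Sigma_{n+2}$ sentence $\exists y\,\pi(y)$, with $\pi\in\Pi_{n+1}$, is true.
\item Assume $\forall y\,\lnot\pi(y)$. Prove by induction on height that every sequent of the derivation, with $\exists y\,\pi(y)$ deleted, is true under every assignment.
\item All remaining formulas have complexity at most $\Pi_{n+1}$. Using $B\Sigma_{n+1}$ to absorb the bounded disjunction, the claim is $\Pi_{n+1}$, so $I\Pi_{n+1}=I\Sigma_{n+1}$ suffices.
\item Each $\exists$-inference introducing $\exists y\,\pi(y)$ is discharged by the hypothesis that every instance $\pi(t)$ is false.
\item At the root you obtain a true empty sequent, which is a contradiction.
\end{enumerate}
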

\begin{proof}
  The implication $P_{k+1}(m) \Rightarrow P_{k+1}(m+1)$ is trivial.
  The implication $P_{k+1}(n+3) \Rightarrow T \text{ is }\Pi_
  {k+1}$-sound follows from because Theorem \ref{Unprovability-Soundness} because $I\Sigma_{n+1}$ is equivalent to $\Pi_{n+3}\hyp\RFN(\varnothing)$.
  The implication ($T \text{ is $\Pi_{k+1}$-sound} \Rightarrow P_{k+1}(k+1)$) is Theorem \ref{generalized incomp}.
\end{proof}

\begin{proposition}
  The implication $P_{k+1}(m) \Rightarrow P_{k+1}(m+1)$ is strict for $k \geq 1$ and $m \leq k$.
\end{proposition}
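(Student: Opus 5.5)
The plan is to exhibit, for each $k\ge1$ and $m\le k$, a $\Sigma_{k+1}$-definable extension $T$ of $I\Sigma_1$ such that $P_{k+1}(m+1)$ holds and $P_{k+1}(m)$ fails. First, a constraint on $T$. If $T$ is not $\Pi_{k+1}$-sound, Lemma \ref{Char of Soundness} gives a definition $T_\sigma$ with $T\vdash T_\sigma=\varnothing$, and then every $P_{k+1}(j)$ fails as soon as $T\vdash\Pi_j\hyp\RFN(\varnothing)$. So $T$ must be $\Pi_{k+1}$-sound (we take it sound). The argument then uses two devices: a "self-reflecting" definition, which refutes $P_{k+1}(m)$, and a monotonicity observation, which establishes $P_{k+1}(m+1)$ by reduction to pure logic.

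\emph{Refuting $P_{k+1}(m)$.} Let $\sigma_0$ be a $\Sigma_1$-definition of $T$ and put $\sigma(x)\equiv\sigma_0(x)\land\Pi_m\hyp\RFN(T_{\sigma_0})$. Since $\Pr_{T_{\sigma_0}}$ is $\Sigma_1$ and $\Tr_{\Pi_m}$ is $\Pi_m$, the conjunct is $\Pi_m$ for $m\ge1$. Because $m\le k$, it is also $\Sigma_{k+1}$, so $\sigma\in\Sigma_{k+1}$. Soundness of $T$ makes the conjunct true, hence $T_\sigma=T_{\sigma_0}$ in $\omega$, and $T_\sigma$ is a $\Sigma_{k+1}$-definition of $T$. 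Inside $T$ we argue by cases:
\begin{itemize}
\item if $\Pi_m\hyp\RFN(T_{\sigma_0})$ holds, then $T_\sigma=T_{\sigma_0}$ and $\Pi_m\hyp\RFN(T_\sigma)$ holds;
\item otherwise $T_\sigma=\varnothing$, and we need $\Pi_m\hyp\RFN(\varnothing)$.
\end{itemize}
So $T\vdash\Pi_m\hyp\RFN(T_\sigma)$, i.e.\ $P_{k+1}(m)$ fails, provided $T\vdash\Pi_m\hyp\RFN(\varnothing)$. By the fact used in the Corollary, $\Pi_{j+3}\hyp\RFN(\varnothing)\equiv I\Sigma_{j+1}$, this holds if $T\supseteq I\Sigma_{\max(1,m-2)}$.

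\emph{Establishing $P_{k+1}(m+1)$.} There are two cases.
\begin{itemize}
\item Case $m=k$. Take $T=\PA$ (or any sound $\Sigma_1$-definable extension of $\PA$). $P_{k+1}(k+1)$ follows directly from Theorem \ref{generalized incomp} with $\Gamma=\Sigma_{k+1}$. Note that the self-reflecting trick breaks down here exactly because $\Pi_{k+1}\hyp\RFN(T_{\sigma_0})$ is not $\Sigma_{k+1}$.
\item Case $m<k$. Any sound $T\supseteq\PA$ fails $P_{k+1}(m+1)$ by the same trick (now $\Pi_{m+1}\subseteq\Sigma_{k+1}$), so the witness must have limited induction. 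Observe that $\Pr_\varnothing(x)\to\Pr_{T_\sigma}(x)$ is provable for every $\sigma$, so $T\vdash\Pi_{m+1}\hyp\RFN(T_\sigma)$ implies $T\vdash\Pi_{m+1}\hyp\RFN(\varnothing)$, which is equivalent to $I\Sigma_{m-1}$. Taking $T=I\Sigma_{m-2}$ (finitely axiomatized, true, hence $\Sigma_1$-definable and sound) for $m\ge3$ gives $T\nvdash I\Sigma_{m-1}$, hence $P_{k+1}(m+1)$. On the other side, $T\vdash\Pi_m\hyp\RFN(\varnothing)$, so the first paragraph gives $\lnot P_{k+1}(m)$.
\end{itemize}

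The main obstacle is the small cases $m\in\{1,2\}$ with $m<k$. There $\Pi_{m+1}\hyp\RFN(\varnothing)$ is already provable in $I\Sigma_1$, so the monotonicity reduction to pure logic gives nothing. For these I would first try a weaker sound base theory (in the spirit of $I\Sigma_1$ restricted, or $I\Sigma_1$ plus a carefully chosen true sentence) and a sharper reflection-for-logic bound, namely that $\Pi_{m+1}\hyp\RFN(T_\sigma)$ over a theory containing $I\Sigma_1$ yields something like $\mathrm{Con}(I\Sigma_1)$-strength. The second ingredient would then be the unprovability of that consequence from $T$ via G\"odel's theorem for the finite theory. Checking that the resulting $T$ still proves $\Pi_m\hyp\RFN(\varnothing)$, so that the self-reflecting definition still refutes $P_{k+1}(m)$, is the delicate point.
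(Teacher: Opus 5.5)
You prove what you prove correctly, and by a simpler route than the paper. The paper builds one Niebergall-style theory $S_2=\Th(S_0)\cap\Th(S_1)$ whose natural $\Sigma_{m+1}$-definition $\sigma_2$ has $S_2$-provable $\Pi_m$-reflection. You instead use two witnesses. For $m=k$ you take $\PA$: Theorem~\ref{generalized incomp} gives $P_{k+1}(k+1)$, and your self-reflecting definition $\sigma_0\wedge\Pi_k\hyp\RFN(T_{\sigma_0})$ refutes $P_{k+1}(k)$. For $3\le m<k$ you take $I\Sigma_{m-2}$, where $\Pr_\varnothing(x)\to\Pr_{T_\sigma}(x)$ yields $P_{k+1}(m+1)$ for all definitions at once.

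\textbf{The gap.} You do not establish the statement for $m\in\{1,2\}$ with $m<k$ (already $(k,m)=(2,1)$). Your sketched repair also fails for extensions of $I\Sigma_1$. For these $m$ we have $m+1\le k$ and $I\Sigma_1\vdash\Pi_{m+1}\hyp\RFN(\varnothing)$. So your own trick, run with $\Pi_{m+1}$ in place of $\Pi_m$, shows that every sound $\Sigma_1$-definable extension of $I\Sigma_1$ fails $P_{k+1}(m+1)$; in particular every $I\Sigma_1+\theta$ with $\theta$ true does. For the same reason, no bound like ``$\Pi_{m+1}\hyp\RFN(T_\sigma)$ yields $\mathrm{Con}(I\Sigma_1)$'' can hold uniformly in $\sigma$. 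Dropping below $I\Sigma_1$ leaves the setting of the Corollary, and you give no argument there either.

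More sharply, a witness must be $\Pi_{k+1}$-sound, by Theorem~\ref{Unprovability-Soundness} with $n=m+1$. It also cannot be $\Sigma_k$-definable. If it had a $\Sigma_k$-definition $\tau$, Craig's trick would give a $\Pi_{k-1}$-definition $\pi=C_\tau$, since $x,y$ are recoverable from $z$. Then $\Pi_{m+1}\hyp\RFN(T_\pi)$ is $\Pi_k$, and $\pi\wedge\Pi_{m+1}\hyp\RFN(T_\pi)$ is a $\Sigma_{k+1}$-definition refuting $P_{k+1}(m+1)$. So the missing ingredient is a $\Pi_{k+1}$-sound extension of $I\Sigma_1$ that is $\Sigma_{k+1}$- but not $\Sigma_k$-definable and still satisfies $P_{k+1}(m+1)$. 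Nothing in your proposal points toward one. Relatedly, your remark that every sound $T\supseteq\PA$ fails $P_{k+1}(m+1)$ is justified by the trick only when $T$ is $\Sigma_k$-definable.

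\textbf{The paper does not supply this ingredient either.} It asserts $P_{k+1}(m+1)$ for $S_2$ ``by Theorem~\ref{generalized incomp}''. With $\Gamma=\Sigma_{k+1}$ that theorem yields only $P_{k+1}(k+1)$, and with $\Gamma=\Sigma_{m+1}$ it controls only $\Sigma_{m+1}$-definitions. So this step covers only $k=m$.
\begin{itemize}
\item For $k>m\ge 3$, $S_2$ can be rescued by exactly your monotonicity argument, since $S_2\subseteq\Th(S_0)$ and $I\Sigma_1+\Th_{\Pi_m}(\omega)\nvdash I\Sigma_{m-1}$.
\item For $m\le 2<k$, $S_2$ is $\Sigma_{m+1}$-definable, hence $\Sigma_k$-definable. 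The construction above then gives a $\Sigma_{k+1}$-definition of $S_2$ whose $\Pi_{m+1}$-reflection $S_2$ proves, so $P_{k+1}(m+1)$ fails for $S_2$.
\end{itemize}
Thus the cases you leave open are exactly those where any witness must be properly $\Sigma_{k+1}$-definable. This is essentially the Question the paper raises right after the proof. Outside those cases your argument is complete and uses simpler witnesses than the paper's.
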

\begin{proof}
  Let $k \geq 2$. 
  We define theories $S,T$ and $U$ as follows. 
  \begin{align*}
    &S_0 = I\Sigma_1 + \Th_{\Pi_{m}}(\omega), \\
    &S_1 = I\Sigma_1 + \Pi_{m}\hyp\RFN(S_0), \\
    &S_2 = \Th(S_0) \cap \Th(S_1)
  \end{align*}
  We also define formulas $\sigma_0(x),\sigma_1(x),\sigma_2(x)$ by 
  \begin{align*}
    &\sigma_0(x) \equiv x = \GN{A_{I\Sigma_1}} \lor (x \in \Pi_m \land \Tr_{\Pi_m}(x)), \\
    &\sigma_{1}(x) \equiv x = \GN{A_{I\Sigma_1}} \lor x =\GN{\Pi_{m}\hyp\RFN(T_{\sigma_0})}, \\
    &\sigma_{2}(x) \equiv \exists p(\Pr_{\sigma_0}(p) \land p(|p|-1) = x) \land \exists q (\Pr_{\sigma_1}(q) \land q(|q|-1) = x)
  \end{align*}
  Here $A_{I\Sigma_1}$ axiomatizes $I\Sigma_1$.
  Then $\sigma_i$ defines $S_{i}$ for $i = 0,1,2$. More formally, $S_1$ is defined as $I\Sigma_1 + \Pi_m\hyp\RFN(T_{\sigma_0})$.
  
  We note that $\Pr_{\sigma_0}(p)$ is of the form $\Pi_{m}$, and hence $\sigma_2$ is of the form $\Sigma_{m+1}$.
 Since $m \leq k$, $S_2$ is $\Sigma_{k+1}$-definable by $\sigma_2$. In addition, $U$ is sound by definition.
Therefore, $P_{k+1}(m+1)$ holds for $S_2$ by Theorem \ref{generalized incomp}.

We show that $S_2$ proves $\Pi_{m}\hyp\RFN(T_{\sigma_{2}})$. 
Since $\sigma_1$ is a $\Sigma_0$-formula, $\Pi_{m}\hyp\RFN(T_{\sigma_1})$ is a true $\Pi_{m}$-sentence. Thus, $S_0 \vdash \Pi_{m}\hyp\RFN(T_{\sigma_1})$ and hence $S_0 \vdash \Pi_{m}\hyp\RFN(T_{\sigma_2})$.
On the one hand, $S_1 \vdash \Pi_{m}\hyp\RFN(T_{\sigma_0})$ by definition. Thus $S_1 \vdash \Pi_{m}\hyp\RFN(T_{\sigma_0})$ as well. 
So $U \vdash \Pi_{m}\hyp\RFN(T_{\sigma_0})$. This means that $P_{k+1}(m)$ does not hold.
\end{proof}

\begin{remark}
  The above example is essentially given by Niebergall \cite{Niebergall}.
\end{remark}

\begin{question}
  The above example $S_2$ is $\Sigma_{m+1}$-definable rather than $\Sigma_{k+1}$-definable.
  Is there a properly $\Sigma_{k+1}$-definable theory witnessing the strictness of $P_k(m) \Rightarrow P_k(m+1)$?
\end{question}

\begin{remark}
The assumption $T \supseteq I\Sigma_{n+1}$ is optimal for the implication $P(n+3) \Rightarrow`T \text{ is $\Pi_k$-sound'}$ if $k \geq n + 4$ in the following sense. 
Let $T = I\Sigma_{n} + \lnot I\Sigma_{n+1}$.
Then, $T \not \vdash \Pi_{n+3}\hyp\RFN(T)$ because $I\Sigma_1 \vdash I\Sigma_{n+1} \leftrightarrow \Pi_{n+3}\hyp\RFN(\varnothing)$. On the one hand, $T$ is not $\Pi_{n+4}$-sound because $\lnot I\Sigma_{n+1}$ is a false $\Pi_{n+4}$-sentence.
\end{remark}

In the previous discussion, we mainly consider $\Sigma_{k+1}$-definable theories.
Since $\Sigma_{k+1}$-definability is equivalent to $\Pi_{k}$-definability by Craig's trick, it seems that we do not need consider $\Pi_k$-definable theories. 
However, it is not clear that the $\Pi_k$-definition $\pi$ of a theory $T$ obtained by Craig's trick defines the same theory as the original $\Sigma_{k+1}$-definition $\sigma$ over a weak theory $T_0$. Thus, it may happen that $T_0$ does not prove $\RFN(T_{\pi}) \leftrightarrow \RFN(T_{\sigma})$.
In fact, a bounded principle is needed to ensure Craig's trick works as desired.


\begin{definition}
  Let $\sigma(x) \equiv \exists y \sigma'(x,y)$ be a $\Sigma_{k+1}$-formula.
  Define a formula $C_{\sigma}(z)$ by 
  \begin{align*}
    C_{\sigma}(z) \equiv \exists x,y \leq z (\sigma'(x,y) \land z = \bigwedge_{i < y} x).
  \end{align*}
\end{definition}

\begin{remark}
  Formally, $x \land y$ is a sequence of the form $(\land,x,y)$. Thus, 
  $\bigwedge_{i < y} x$ is a sequence of the form
  $ (\land,x,(\cdots (\land,x,(\land,x,x))) \cdots )$.
  Therefore, if we know that $z$ is of the form $\bigwedge_{i < y} x$, then we can compute $x$ and $y$ from $z$.
\end{remark}

\begin{lemma}
  The following assertions are equivalent over $I\Sigma_1$.
  \begin{enumerate}
    \item $B\Sigma_{k+1}$.
    \item For any $\sigma(x) \in \Sigma_{k+1}$, $\Th(T_{\sigma}) \subseteq \Th(T_{C_{\sigma}})$.
    \item The theorem schema $\{\Th(T_{\sigma}) \subseteq \Th(T_{C_{\sigma}}) : \sigma \in \Sigma_{k+1}\}$ where $\sigma$ runs over meta-formulas.
  \end{enumerate}
  In (2), $\sigma \in \Sigma_{k+1}$ is a formula coded in the base theory, rather than a meta-formula.
\end{lemma}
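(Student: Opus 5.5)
The plan is the cycle $(1)\Rightarrow(2)\Rightarrow(3)\Rightarrow(1)$. The step $(2)\Rightarrow(3)$ is immediate: each meta-formula $\sigma\in\Sigma_{k+1}$ has a code, the code is a (standard) $\Sigma_{k+1}$-formula, and the $\Sigma_{k+1}$-satisfaction predicate agrees provably in $I\Sigma_1$ with $\sigma$ on that code. So (2), instantiated at the code, yields the instance of (3).

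For $(1)\Rightarrow(2)$, work in $I\Sigma_1+B\Sigma_{k+1}$ and fix a $T_\sigma$-proof $p$ of $\varphi$. Its axioms $x_0,\dots,x_{l-1}$ (with $l$ possibly nonstandard) each satisfy $\exists y\,\sigma'(x_i,y)$, where $\sigma'$ is $\Pi_k$.
\begin{enumerate}
\item Apply $B\Sigma_{k+1}$, in its $B\Pi_k$ form, to $\forall i<l\,\exists y\,\sigma'(x_i,y)$. This gives a bound $b$ with $\forall i<l\,\exists y<b\,\sigma'(x_i,y)$.
\item To select witnesses simultaneously we need a sequence $\langle y_i : i<l\rangle$. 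Use the $\Pi_k$ (equivalently $\Delta_{k+1}$) least-number principle, available from $B\Sigma_{k+1}$ with $I\Sigma_1$ (which yields $I\Delta_{k+1}$ in the relevant sense), to take $y_i$ least. Then show by $\Sigma_{k+1}$/$\Delta_{k+1}$ bounded comprehension, also a consequence of $B\Sigma_{k+1}$, that the finite sequence exists.
\item With witnesses $y_i$ fixed, each $z_i=\bigwedge_{j<y_i}x_i$ satisfies $C_\sigma(z_i)$, because $x_i,y_i\le z_i$ for a standard coding.
\item From $z_i$ the formula $x_i$ is derivable by $\wedge$-elimination in $y_i$ steps. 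Replace each axiom occurrence $x_i$ in $p$ by $z_i$ followed by these steps.
\end{enumerate}
The resulting object is a $T_{C_\sigma}$-proof of $\varphi$; it is built by a $\Delta_1$ procedure from the data $p,\langle y_i\rangle$, so it exists in $I\Sigma_1$.

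For $(3)\Rightarrow(1)$, assume $\lnot B\Sigma_{k+1}$ and produce a failing instance. Take a $\Pi_k$ formula $\theta(i,y)$ (parameters absorbed by coding) and an $a$ with $\forall i<a\,\exists y\,\theta(i,y)$ but no bound on the witnesses. The meta-formula will be
\[
\sigma(x)\equiv \exists i<a\,\bigl(x=\GN{P_i}\land\exists y\,\theta(i,y)\bigr)\lor\dots,
\]
where $P_i$ are coded sentences; the parameter $a$ is handled by making it a free variable that is universally quantified in the schema instance. The intended effect is that $T_\sigma$ contains all of $P_0,\dots,P_{a-1}$, so $T_\sigma\vdash\bigwedge_{i<a}P_i$ by a short proof. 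Any $T_{C_\sigma}$-proof, however, uses finitely many conjunctions $\bigwedge_{j<y}P_i$ with $\sigma'(P_i,y)$; if a single proof covers every $i<a$, its length bounds the witnesses $y$, contradicting unboundedness. One must also ensure the conjunction is not obtainable from a proper subset of the $P_i$. Choosing $P_i$ to be independent atoms, e.g. $P_i\equiv(c_i=c_i)$, fails because those are logically valid. Instead, take $P_i$ to be the sentence ``$\bar i\in X$'' for a fresh unary predicate, or the arithmetic sentence $\lnot(\bar{i}=\bar{i}+1)$ chosen to require its own axiom.

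The main obstacle is making this formal provability argument inside a nonstandard model. It needs a conservative, cut-elimination-free argument that a $T_{C_\sigma}$-proof of $\bigwedge_{i<a}P_i$ must mention each $P_i$, which I would obtain with a simple $\Delta_0$ semantic valuation argument (set $P_i$ false and the others true) internal to $I\Sigma_1$. A secondary difficulty is checking that step 2 of $(1)\Rightarrow(2)$ really needs only $B\Sigma_{k+1}$ and not $I\Sigma_{k+1}$.
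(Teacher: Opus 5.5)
Your proposal is correct and is essentially the paper's proof. It matches step by step: $(1)\Rightarrow(2)$ via a $B\Sigma_{k+1}$-bound plus a choice function for the witnesses, replacing each axiom by its iterated conjunction followed by $\wedge$-eliminations; $(2)\Rightarrow(3)$ by instantiation at codes; and $(3)\Rightarrow(1)$ by putting pairwise independent sentences into $T_\sigma$ (the paper uses $0<\bar x$) and reading a bound on the $\Pi_k$-witnesses off any $T_{C_\sigma}$-proof of their conjunction, where the paper merely asserts the independence you propose to verify semantically. Only small repairs are needed: bounded $\Sigma_{k+1}$-comprehension does not follow from $B\Sigma_{k+1}$ (bounded $\Pi_k$- or $\Delta_{k+1}$-comprehension does, and suffices); the natural countermodel is a finite structure of size about $a$ whose satisfaction relation is $\Delta_1$ via exponentially large evaluation tables (available in $I\Sigma_1$) rather than a $\Delta_0$ valuation; and the unspecified ``$\lor\dots$'' in $\sigma$ must not add arithmetic axioms, since over Robinson arithmetic $\lnot(\bar i=\bar i+1)$ is provable outright and independence fails.
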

\begin{proof}
We prove $1 \to 2 \to 3 \to  1$.

$(1 \to 2)$
  Let $\sigma(x) \equiv \exists y \sigma'(x,y) \in \Sigma_{k+1}$.
Let $p$ be a proof over $T_{\sigma}$.
By bounded $\Pi_0$-comprehension, take the following $A'$.
\begin{align*}
  A' = \{i < |p| : p(i) \in \LogAx \lor \text{$p(i)$ is inferred from $p(0),\ldots, p(i-1)$}\}.
\end{align*}
Here, $p$ denotes the length of $p$, and $\LogAx$ denotes the set of logical axioms.
Let $A = \{i < |p| : i \not \in A'\}$. Then $A$ is the set of non-logical axioms appearing in $p$.
Now we have $\forall i < |p|  \exists y (i \in A \to \sigma'(p(i),y))$.
By $B\Sigma_{k+1}$, there exist a number $y$ and a function $f: A \to y$ such that $\forall i \in A  (\sigma'(p(i),f(i)))$.
By using $f$, we can replace $p(i)$ to the concatenation of $\langle \bigwedge_{j < f(i)} p(i)\rangle$ and a derivation of $p(i)$ from $\bigwedge_{j < f(i)} p(i)$  for all $i \in A$.
Then the resulting sequence is a proof over $T_{C_{\sigma}}$ proving the same sentence as $p$.

$(2 \to 3)$ Trivial.

$(3 \to 1)$ We prove $B\Pi_{k}$. Let $\tau(x,y)$ be a $\Pi_k$-formula. 
Let $u$ be such that $\forall x \leq u \exists y \sigma(x,y)$.
We show that $\exists v \forall x \leq u \exists y \leq v \tau(x,y)$.

For each number $x$, let $a_x$ be the formula $0 < \overline{x}$.
Then each $a_x$ is independent from others over logical axioms.

Define $\sigma(z) \equiv \exists x \leq z(z = a_x \land \exists y \tau(x,y) )  $.
Then $\sigma$ is $\Sigma_{k+1}$.
Let $p$ be the sequence $\langle a_0, \ldots, a_{u}, a_0 \land \cdots \land a_{u} \rangle$. 
Then $p$ is a proof\footnote{More formally, we add a derivation of $a_0 \land \cdots \land a_u$ from $a_0,\ldots,a_u$ if it is needed.} of $a_0 \land \cdots \land a_{u}$ from $T_{\sigma}$.
Let $q$ be a proof of $a_0 \land \cdots \land a_u$ from $T_{C_{\sigma}}$. Let $A$ be the non-logical axioms appearing in $q$.
Since each member of $T_{C_{\sigma}}$ is of the from $a_x \land a_x \land \cdots \land a_x$ and each $a_x$ is independent from other $a_y$'s, $A$ must include formulas of the form $a_x \land \cdots \land a_x$ for all $x \leq u$.
Thus we have $\forall x \leq u \exists y \leq p \tau(x,y)$.
\end{proof}

\begin{corollary}
  The following assertions are equivalent over $I\Sigma_1$.
  \begin{enumerate}
    \item $B\Sigma_{k+1}$.
    \item Uniform $\Sigma_{k+1}$ Craig's trick: $\forall \sigma(x) \in \Sigma_{k+1}(\Th(T_{\sigma}) = \Th(T_{C_{\sigma}}))$.
    \item $\Sigma_{k+1}$ Craig's trick: The theorem schema $\{\Th(T_{\sigma}) = \Th(T_{C_{\sigma}}) : \sigma \in \Sigma_{k+1}\}$ where $\sigma$ runs over meta-formulas.
  \end{enumerate}
\end{corollary}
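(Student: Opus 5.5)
The plan is to derive the corollary from the preceding lemma, which already gives one inclusion, by adding the reverse inclusion $\Th(T_{C_\sigma}) \subseteq \Th(T_\sigma)$. We will show this reverse inclusion is provable in $I\Sigma_1$ alone, uniformly in a coded $\sigma \in \Sigma_{k+1}$. With that in hand, (1) $\Rightarrow$ (2) follows from (1) $\Rightarrow$ (2) of the lemma together with the reverse inclusion. Next, (2) $\Rightarrow$ (3) is immediate, since each meta-formula instance is obtained from the uniform statement by instantiation (using that the coded formula $\ulcorner\sigma\urcorner$ and the truth/satisfaction predicate give $T_{\ulcorner \sigma\urcorner} = T_\sigma$ provably in $I\Sigma_1$). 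Finally, (3) $\Rightarrow$ (1) holds because (3) trivially yields the schema $\Th(T_\sigma)\subseteq\Th(T_{C_\sigma})$, which is item (3) of the lemma, and that already implies $B\Sigma_{k+1}$.

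For the reverse inclusion, we argue inside $I\Sigma_1$. Let $q$ be a proof from $T_{C_\sigma}$, and let $z = q(i)$ be a non-logical axiom occurrence. Then $C_\sigma(z)$ holds: there are $x,y \le z$ with $\sigma'(x,y)$ and $z = \bigwedge_{i<y} x$. We transform $q$ by inserting, before each such $z$, the formula $x$ followed by a derivation of $\bigwedge_{i<y}x$ from $x$. This derivation is a $\Delta_1$-constructible sequence of length polynomial in $z$, using $\land$-introduction $y$ times. The formula $x$ itself is in $T_\sigma$, witnessed by $y$. The key point is that no collection is needed: the witness $y$ is bounded by $z$ and can be recovered from $z$ by the preceding remark. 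So the map sending $z$ to $(x,\text{derivation})$ is primitive recursive in $z$, and the whole transformed proof is obtained by a primitive recursive operation on $q$, which $I\Sigma_1$ proves total. We then check that the new sequence is a proof from $T_\sigma$ of the same end formula. The only new non-logical axioms are the $x$'s, and for each of these $\sigma(x)$ holds, because $\sigma'(x,y)$ holds for the decoded $y$. The check that this is a proof is a $\Sigma_{k+1}$ verification of axiomhood for each entry. Here we only need the witnesses individually, not a single bound, because $\Pr_{T_\sigma}$ asserts $\forall i<|p|\,(\text{axiom}(p(i)) \to \sigma(p(i)))$ and each instance is witnessed.

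The main obstacle is this last point. We must make sure the formalization of $\Pr_{T_\sigma}$ really is ``for each entry, either logical, inferred, or $\sigma$ holds'', rather than something requiring a uniform bound on witnesses. If the paper's $\Pr$ is formalized that way, the inclusion $\Th(T_{C_\sigma})\subseteq \Th(T_\sigma)$ needs no collection. We would also verify that the bounded quantifiers in $C_\sigma$ and the decoding of $x,y$ from $z$ are $\Delta_0$, so that $T_{C_\sigma}$ is genuinely $\Pi_k$ and the manipulation stays within $I\Sigma_1$.
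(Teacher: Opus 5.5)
Your proposal is correct and follows the paper's route: (1)$\Rightarrow$(2) by the lemma plus the reverse inclusion $\Th(T_{C_\sigma})\subseteq\Th(T_\sigma)$, (2)$\Rightarrow$(3) by instantiation, and (3)$\Rightarrow$(1) by item (3) of the lemma; your collection-free $I\Sigma_1$ argument for the reverse inclusion is exactly what the paper's proof (run in $B\Sigma_{k+1}$) and the remark after it assert. As in the paper, the argument rests on $(x,y)$ being recoverable from $z$, which needs a uniquely decodable padding: if $\bigwedge_{i<1}x$ is $x$ itself, then $w\wedge w$ decodes both as $(w\wedge w,1)$ and as $(w,2)$, and a pair $(x,y)$ picked without consulting $\sigma'$ need not satisfy $\sigma'(x,y)$ — though for (1)$\Rightarrow$(2) alone this is harmless, since $B\Sigma_{k+1}\vdash I\Sigma_k$ gives bounded $\Pi_k$-comprehension to choose a decoding satisfying $\sigma'$.
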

\begin{proof}
  It is enough to prove that over $B\Sigma_{k+1}$, $\forall \sigma \in \Sigma_{k+1} (\Th(T_{\sigma}) \supseteq \Th(T_{C_{\sigma}}))$ holds.

  We work in $B\Sigma_{k+1}$. Let $\sigma(x) \equiv \exists y \sigma'(x,y)$ be a $\Sigma_{k+1}$-formula.
  Let $p$ be a proof over $T_{C_{\sigma}}$ and $A$ be the set of non-nolgical axioms appearing in $p$. 
  By definition of $C_{\sigma}$, $\forall i \in A \exists x,y < p (\sigma'(x,y) \land p(i) = \bigwedge_{j < y} x )$.
  We note that these $x$ and $y$ are computable from $i \in A$.
Thus there is a function $f,g: A \to p$ such that $\forall i \in A(p(i) = \bigwedge_{j < g(i)} f(i) \land \sigma'(f(i),g(i)))$.
  By using $f,g$, we can transform $p$ to a proof over $T_{\sigma}$ by replacing each $p(i)$ to the concatenation of $\langle f(i) \rangle$ and the derivation of $\bigwedge_{j < g(i)} f(i) \rangle$ from $f(i)$.
\end{proof}

\begin{remark}
  The inclusion $\Th(T_{\sigma}) \supseteq \Th(T_{C_{\sigma}})$ holds already in $I\Sigma_1$ by the same proof as above.
\end{remark}

\begin{theorem}[$B\Sigma_{k+1}$]
$\forall \sigma \in \Sigma_{k+1}\exists \pi \in \Pi_{k} (\Th(T_{\sigma}) = \Th(T_{\pi}))$.
\end{theorem}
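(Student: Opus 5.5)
The witness will be $\pi \equiv C_{\sigma}$, and the equality $\Th(T_{\sigma}) = \Th(T_{\pi})$ will come from the preceding corollary (uniform $\Sigma_{k+1}$ Craig's trick), which holds under $B\Sigma_{k+1}$.

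The remaining work is to check that $C_{\sigma}$ is $\Pi_k$ provably in $B\Sigma_{k+1}$. Write $\sigma(x) \equiv \exists y\, \sigma'(x,y)$ with $\sigma' \in \Pi_k$. Then
\[
C_{\sigma}(z) \equiv \exists x,y \leq z\,(\sigma'(x,y) \land z = \textstyle\bigwedge_{i<y} x).
\]
This is a bounded existential quantifier in front of a $\Pi_k$ matrix together with a $\Delta_0$ (indeed primitive recursive, hence $\Delta_1$ in $I\Sigma_1$) clause.

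Two routes show that this lies in $\Pi_k$.

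\textbf{Decoding route (preferred).} By the remark before the lemma, $x$ and $y$ are computable from $z$ whenever $z$ has the form $\bigwedge_{i<y} x$. So define a $\Delta_1$-in-$I\Sigma_1$ decoding: $\mathrm{Form}(z)$ says that $z$ is of the form $\bigwedge_{i<y}x$, and $d_0(z), d_1(z)$ return the unique such $x$ and $y$. Then $C_{\sigma}(z)$ is equivalent to
\[
\mathrm{Form}(z) \land \forall x,y \leq z\,\bigl((x,y) = (d_0(z),d_1(z)) \to \sigma'(x,y)\bigr),
\]
which is $\Pi_k$. This uses only the uniqueness of $x,y$, proved in $I\Sigma_1$ by induction on the syntactic structure of $z$, and it avoids collection entirely.

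\textbf{Collection route.} Alternatively, $B\Sigma_{k+1}$ (equivalently $B\Pi_k$) shows that $\Pi_k$ formulas are closed under bounded quantification, so a bounded existential in front of a $\Pi_k$ formula is again $\Pi_k$. This is the standard fact that bounded quantifiers can be pushed inside unbounded ones using $B\Sigma_k$, and $B\Sigma_k$ follows from $B\Sigma_{k+1}$.

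Either route gives the result. The main obstacle is purely a matter of formalization. One must fix the G\"odel-numbering conventions so that $z = \bigwedge_{i<y}x$ forces $x, y \leq z$ and uniquely determines both; if $y = 0$ or $y = 1$ is allowed, the empty or trivial conjunction needs a fixed convention. One must also argue in $B\Sigma_{k+1}$ that $\pi$ is literally a (coded) $\Pi_k$ formula that is provably equivalent to $C_{\sigma}$, and not merely equivalent to it in $\omega$. Granting this, the corollary transfers: $\Th(T_{C_{\sigma}}) = \Th(T_{\pi})$, since $T_{C_\sigma} = T_\pi$ provably, and hence $\Th(T_{\sigma}) = \Th(T_{\pi})$.
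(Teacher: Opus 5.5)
Your proof is correct. Your ``collection route'' is exactly the paper's proof. The paper observes that $C_\sigma(z)$ has the form $\exists x\le z\,\pi'(x,z)$ with $\pi'\in\Pi_k$. It rewrites this as $\exists x\le z\,\Tr_{\Pi_k}(\GN{\pi'},x,z)$ to handle the fact that $\sigma$ ranges over coded formulas, absorbs the bounded quantifier into $\Pi_k$ by collection, and combines the result with the uniform Craig's trick corollary.

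Your preferred ``decoding route'' is genuinely different. It replaces collection in the complexity calculation by unique readability of $\bigwedge_{i<y}x$. The $\Pi_k$ form of $C_\sigma$ is then available already over $I\Sigma_1$, and $B\Sigma_{k+1}$ enters only through the equality $\Th(T_\sigma)=\Th(T_{C_\sigma})$. That is precisely the step the preceding lemma shows to require $B\Sigma_{k+1}$. So your route localises the use of collection more sharply. The price is dependence on coding conventions ($x,y\le z$ and a $\Delta_1$ decoding), whereas the paper's route only needs $C_\sigma$ to be a bounded existential over a $\Pi_k$ matrix.

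There are two small points to tidy.
\begin{enumerate}
\item As you wrote it, $\forall x,y\le z\,(\cdots\to\sigma'(x,y))$ is a bounded universal in front of a $\Pi_k$ formula. It is not literally $\Pi_k$, and the standard reduction to $\Pi_k$ form uses $B\Sigma_{k-1}$ when $k\ge 2$. To make ``avoids collection entirely'' literally true, drop the bounds, which is harmless since the antecedent already forces $x,y\le z$. Alternatively, write $\sigma'(d_0(z),d_1(z))$ as $\forall x\forall y\,(x=d_0(z)\land y=d_1(z)\to\sigma'(x,y))$.
\item The uniformity issue you flag at the end is routine on your route. The code of $\pi$ is obtained primitive recursively from the code of $\sigma'$. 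The agreement of the truth of $C_\sigma(z)$ with $\Tr_{\Pi_k}(\pi,z)$ then follows from the Tarski clauses for the fixed-level truth predicates, and the paper's $\Tr_{\Pi_k}$ reformulation plays the same role.
\end{enumerate}
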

\begin{proof}
We work in $B\Sigma_{k+1}$.
Let $\sigma \in \Sigma_{k+1}$. 
Then $C_{\sigma}(z)$ is of the form $\exists x \leq z \pi'(x,z) $ for some $\pi' \in \Pi_{k}$.
Note that $\forall z(C_{\sigma}(z) \leftrightarrow \exists x \leq z \Tr_{\Pi_k}(\GN{\pi'},x,z))$. 
Since the latter is equivalent to a $\Pi_k$-formula, this completes the proof.
\end{proof}

We next consider the relationship between the soundness and the definability for theories of second-order arithmetic.
As in Theorem \ref{Unprovability-Soundness}, the reflection principle for the empty set is important.
\begin{lemma}[{\cite[Theorem 5.1.]{Frittaion}}]
  Over $\ACAo$, 
  \begin{itemize}
    \item $I\Pi^1_{n+1}$ is equivalent to $\Pi^1_{n+3}\hyp\RFN(\varnothing)$,
    \item $(I\Pi^1_{n+1})^{-}$ is equivalent to $\Sigma^1_{n+2}\hyp\RFN(\varnothing)$.
  \end{itemize}
  Here, $(I\Pi^1_{n+1})^{-}$ is the induction schema for $\Pi^1_{n+1}$ formulas without set parameters.
\end{lemma}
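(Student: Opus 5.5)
The plan is to prove both equivalences the same way, one direction at a time, working throughout in $\ACAo$. $\ACAo$ supplies the partial truth predicates $\Tr_{\Pi^1_{j}}$ and $\Tr_{\Sigma^1_{j}}$ (for $\exists X\Pi^0_2$-style normal forms, as in the Remark above). It also proves that the proof predicate $\Pr_{\varnothing}$ of pure (two-sorted) logic behaves well under substitution of numerals and sets of formulas.

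\emph{Reflection implies induction.} Let $\varphi(x,X)$ be $\Pi^1_{n+1}$ and put
\[
\psi(x) \equiv \forall X\bigl[\lnot\varphi(0,X)\lor\exists y(\varphi(y,X)\land\lnot\varphi(y+1,X))\lor\varphi(x,X)\bigr].
\]
The middle disjunct is $\Sigma^1_{n+2}$, so $\psi(x)$ is $\Pi^1_{n+3}$. First I will check in $\ACAo$ (indeed in $I\Sigma_1$) that $\forall x\,\Pr_{\varnothing}(\psi(\overline{x}))$. The required logical proof is built by a $\Sigma_1$-recursion on $x$, chaining $x$ instances of modus ponens along $0,S0,\dots,S^x0$. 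Then $\Pi^1_{n+3}\hyp\RFN(\varnothing)$ gives $\forall x\,\Tr_{\Pi^1_{n+3}}(\psi(\overline{x}))$. By the Tarski clauses for the partial truth predicate, this yields $\forall x\,\psi(x)$, which is the full induction instance with arbitrary parameter $X$. For the parameter-free version I will drop $\forall X$. The sentence is then a disjunction of a $\Sigma^1_{n+1}$, a $\Sigma^1_{n+2}$ and a $\Pi^1_{n+1}$ formula, hence $\Sigma^1_{n+2}$, so $\Sigma^1_{n+2}\hyp\RFN(\varnothing)$ suffices. This direction is routine.

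\emph{Induction implies reflection.} Suppose a $\Pi^1_{n+3}$ sentence $\chi$ has a logical proof $p$. I want to show $\Tr_{\Pi^1_{n+3}}(\chi)$, arguing inside $\ACAo+I\Pi^1_{n+1}$ by a formalized soundness argument: induction along $p$ on the assertion that each line is true under every assignment. The obstacle is that lines of $p$ may have unbounded complexity, so no partial truth predicate covers them. Full cut elimination, which would give the subformula property, needs superexponentiation and is not available. I will first try to imitate the first-order proof that $I\Sigma_{n+1}$ proves $\Pi_{n+3}\hyp\RFN$ for pure logic.
\begin{enumerate}
\item Pass to a Tait-style sequent calculus and write $\lnot\chi$ as $\exists X\,\theta(X)$ with $\theta\in\Pi^1_{n+2}$.
\item Fix a witness $X_0$ and Skolemize the outer $\forall\exists$ layers of $\theta$. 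This treats the $\Pi^1_{n+1}$ and $\Sigma^1_{n+1}$ kernels as fresh relation symbols interpreted by the true (parameter-free, respectively with parameter $X_0$) $\Pi^1_{n+1}$ predicates.
\item The proof $p$ then becomes a first-order logical refutation of a set of universal axioms over these new atoms. By a Herbrand-style argument formalized in the weak base, it yields a finite disjunction of quantifier-free instances, which is refuted using the truth of the atoms.
\end{enumerate}

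The step I expect to be hardest is step (3): showing that this Herbrand or partial cut-reduction argument needs only $\Pi^1_{n+1}$-induction. This means applying induction only to statements of the form ``every sequent of the (partially reduced) proof up to stage $i$ is true'', where the relevant sequents involve only the designated $\Pi^1_{n+1}$ atoms. It also means tracking that set parameters enter exactly when $X_0$ is used. This is what makes $(I\Pi^1_{n+1})^-$ match $\Sigma^1_{n+2}\hyp\RFN(\varnothing)$: there is no outer $\forall X$, so no witness $X_0$ appears. If the direct formalization stalls, my fallback is to follow Frittaion's argument in \cite{Frittaion} at this point.
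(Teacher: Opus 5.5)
The paper gives no proof of this lemma; it quotes it from Frittaion, so your proposal has to stand on its own. The direction from reflection to induction is fine. One bookkeeping point: $\Tr$ is defined on normal forms, and bringing $\psi(\overline{x})$ into normal form uses pairing and comprehension, not pure logic. So reflect on $A\to\psi(\overline{x})$ instead, with $A$ a single $\Pi^1_2$ axiom of $\ACAo$; this sentence is still $\Pi^1_{n+3}$, resp.\ $\Sigma^1_{n+2}$.

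The converse direction, which is the substantive half, has a genuine gap, and part of it rests on a misdiagnosis. Cut elimination \emph{is} available. $\ACAo$ contains $I\Sigma_1$ (its first-order part is even $\PA$), which proves superexponentiation total, and that is all the formalized cut-elimination theorem for pure two-sorted logic needs. Conversely, the formalized Herbrand theorem you invoke in step (3) costs the same superexponential blow-up, so the detour would not avoid the obstacle even if it existed. Steps (2)--(3) also fail on their own terms. A logical proof of $\chi$ may depend on the internal structure of the $\Pi^1_{n+1}$/$\Sigma^1_{n+1}$ kernels, for instance on two syntactically different but logically equivalent kernels. Replacing them by fresh relation symbols therefore need not produce a refutation of anything. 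Moreover, Skolem functions for set quantifiers are maps from sets to sets; they cannot be interpreted in second-order arithmetic, and $\ACAo$ has no choice principle to supply them. As written, the hard direction is really deferred to ``follow Frittaion''.

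Your step (1) and the choice of $X_0$ are the right start; what is missing is how to use $X_0$.
\begin{enumerate}
\item Write $\chi\equiv\forall X\exists Y\kappa(X,Y)$ with $\kappa\in\Pi^1_{n+1}$, and take a cut-free proof of $\exists Y\kappa(C,Y)$ with $C$ a free set variable. Every formula in it is either $\exists Y\kappa(C,Y)$ or an instance of a subformula of $\kappa$, hence of complexity at most $\Pi^1_{n+1}$.
\item Keep $\theta(X_0)\equiv\forall Y\lnot\kappa(X_0,Y)$ as a side hypothesis, outside the induction formula.
\item Prove by induction along the derivation that, in each sequent, the disjunction of the formulas other than $\exists Y\kappa(C,Y)$ holds under every assignment sending $C$ to $X_0$. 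This is a $\Pi^1_{n+1}$ claim whose only set parameter is $X_0$, modulo routine handling of the bounded quantifier over earlier sequents.
\item The inference introducing $\exists Y\kappa(C,Y)$ from $\kappa(C,V)$ is absorbed by the side hypothesis. All other rules are handled by the Tarski clauses for $\Tr_{\Pi^1_{n+1}}$.
\item At the end sequent the remaining disjunction is empty, which is the contradiction.
\end{enumerate}
For a $\Sigma^1_{n+2}$ sentence there is no $C$, so the induction formula has no set parameter and $(I\Pi^1_{n+1})^{-}$ suffices. This, rather than any Skolemization, is the precise content of your remark that set parameters enter exactly through $X_0$.
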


\begin{remark}
  In \cite{Frittaion}, the axiom of uniform reflection is used. On the other hand, we adopt global reflection as the definition of the reflection principle (Definition \ref{Def of ref}).  We note that the principles of global and uniform reflection are equivalent if the base theory is strong enough to control the truth predicates such as $\ACAo$.
\end{remark}

\begin{theorem}\label{Unprovability-Soundness-SecondOrder}
  Let $T \supseteq \ACAo$.
  \begin{enumerate}
    \item Assume $T \vdash I \Pi^1_{n+1}$ and $\Sigma^1_{k+1}$-definable. If $T \not \vdash \Pi^1_{n+3}\hyp\RFN(T_{\sigma})$ for any $\Sigma^1_{k+1}$ definition $T_{\sigma}$ of $T$, then $T$ is $\Pi^1_{k+1}$-sound.
    \item Assume $T \vdash (I\Pi^1_{n+1})^{-}$ and $\Pi^1_{k+1}$-definable.
    If $T \not \vdash \Sigma^1_{n+2}\hyp\RFN(T_{\sigma})$ for any $\Sigma^1_{k+1}$ definition $T_{\sigma}$ of $T$, then $T$ is $\Sigma^1_{k+1}$-sound.
  \end{enumerate}
\end{theorem}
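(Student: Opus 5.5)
Both parts follow the first-order argument of Theorem \ref{Unprovability-Soundness}, with the Frittaion lemma in place of the equivalence of $I\Sigma_{n+1}$ with $\Pi_{n+3}\hyp\RFN(\varnothing)$. In each case we argue by contraposition. Assuming $T$ is not sound for the relevant class, we build a definition $T_\sigma$ of $T$ that $T$ proves to be empty. Reflection for $T_\sigma$ then reduces to reflection for $\varnothing$, which $T$ proves by the induction hypothesis.

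For (1) we first need the second-order analogue of Lemma \ref{Char of Soundness}. Suppose $T$ is not $\Pi^1_{k+1}$-sound, and let $\theta$ be a $\Pi^1_{k+1}$-sentence with $T\vdash\theta$ and $(\omega,2^\omega)\models\lnot\theta$. Take a $\Sigma^1_{k+1}$-definition $\sigma'$ of $T$ and set $\sigma(x)\equiv\sigma'(x)\land\lnot\theta$. Then $\sigma$ is $\Sigma^1_{k+1}$, since $\lnot\theta$ is a $\Sigma^1_{k+1}$ sentence and $\Sigma^1_{k+1}$ is closed under conjunction up to $\ACAo$-provable equivalence; if the official definition demands a literal normal form, we first prenex $\sigma$ within $\ACAo$. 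Because $\lnot\theta$ is true, $\sigma$ defines the same set as $\sigma'$ in the intended model, so $T_\sigma$ is a $\Sigma^1_{k+1}$-definition of $T$. Moreover $T\vdash\theta$ gives $T\vdash\forall x\,\lnot\sigma(x)$, that is, $T\vdash T_\sigma=\varnothing$. Reasoning in $T$, this means $\Pr_{T_\sigma}$ coincides with $\Pr_{\varnothing}$, so $\Pi^1_{n+3}\hyp\RFN(T_\sigma)\leftrightarrow\Pi^1_{n+3}\hyp\RFN(\varnothing)$. By Frittaion's lemma the right-hand side follows from $I\Pi^1_{n+1}$ over $\ACAo$, and $T$ proves $I\Pi^1_{n+1}$. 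Hence $T\vdash\Pi^1_{n+3}\hyp\RFN(T_\sigma)$, contradicting the hypothesis.

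Part (2) is the same argument with the classes dualized, and here it matters which definitions the hypothesis quantifies over. Suppose $T$ is not $\Sigma^1_{k+1}$-sound, witnessed by a false $\Sigma^1_{k+1}$ theorem $\theta$. Then $\lnot\theta$ is a true $\Pi^1_{k+1}$ sentence, so $\sigma\equiv\sigma'\land\lnot\theta$ is $\Pi^1_{k+1}$ when we start from a $\Pi^1_{k+1}$-definition $\sigma'$. This is why the hypothesis of (2) is $\Pi^1_{k+1}$-definability, and the ``$\Sigma^1_{k+1}$ definition'' appearing in the conclusion should presumably read $\Pi^1_{k+1}$; I would prove (2) under that reading. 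The rest goes through as before: $T\vdash T_\sigma=\varnothing$, hence $T\vdash\Sigma^1_{n+2}\hyp\RFN(T_\sigma)\leftrightarrow\Sigma^1_{n+2}\hyp\RFN(\varnothing)$, and the latter follows from $(I\Pi^1_{n+1})^-$ by the second clause of Frittaion's lemma.

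The main obstacle is not the combinatorics but checking that the formalized equivalences hold inside $T$. First, $T_\sigma=\varnothing$ must yield $\Pr_{T_\sigma}(x)\leftrightarrow\Pr_\varnothing(x)$ with the same truth predicate on both sides; this needs only $\ACAo$. Second, global reflection, which the paper uses, must match the uniform reflection in Frittaion's result, and the preceding remark supplies this over $\ACAo$. Third, the conjunction $\sigma'\land\lnot\theta$ must be provably equivalent to a formula in the official class, including the paper's convention that $\Sigma^1_1$ means $\exists X\,\Pi^0_2$.
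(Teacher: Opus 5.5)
Your proof is correct and is the paper's own argument. The paper only says that the proof of Theorem~\ref{Unprovability-Soundness} carries over: take $\sigma\equiv\sigma'\land\lnot\theta$ so that $T\vdash T_\sigma=\varnothing$, then use Frittaion's lemma to obtain reflection for $\varnothing$ from the induction that $T$ proves. Your reading of the hypothesis in (2) as ranging over $\Pi^1_{k+1}$-definitions is the intended one, as the corollary immediately following the theorem confirms.
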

\begin{proof}
  The same proof as in Theorem \ref{Unprovability-Soundness} works.
\end{proof}
\begin{corollary}
  Let $T \supseteq \ACAo$.
\begin{enumerate}
  \item Assume $T \vdash I \Pi^1_{n+1}$ and is $\Sigma^1_{k+1}$-definable. Then $T$ is $\Pi^1_{k+1}$-sound if and only if $T \not \vdash \Pi^1_{n+3}\hyp\RFN(T_{\sigma})$ for any $\Sigma^1_{k+1}$-definition $T_{\sigma}$ of $T$.
  \item Assume $T \vdash (I\Pi^1_{n+1})^{-}$ and is $\Pi^1_{k+1}$-definable. Then $T$ is $\Sigma^1_{k+1}$-sound if and only if $T \not \vdash \Sigma^1_{n+2}\hyp\RFN(T_{\sigma})$ for any $\Pi^1_{k+1}$-definition $T_{\sigma}$ of $T$.
  \end{enumerate}
\end{corollary}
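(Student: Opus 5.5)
Each clause of the corollary is an equivalence, and I will prove the two directions separately. In each case one direction comes from the second-order version of Theorem \ref{generalized incomp}, and the other is Theorem \ref{Unprovability-Soundness-SecondOrder}.

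For the forward direction of (1), assume $T$ is $\Pi^1_{k+1}$-sound and let $T_{\sigma}$ be any $\Sigma^1_{k+1}$-definition of $T$. By the Remark after Theorem \ref{generalized incomp} with $\Gamma=\Sigma^1_{k+1}$, using $T_\sigma \supseteq \ACAo$ since $\Th(T_\sigma)=\Th(T)$, we get $T_{\sigma}\nvdash \Pi^1_{k+1}\hyp\RFN(T_{\sigma})$. To reach the required $\Pi^1_{n+3}\hyp\RFN(T_\sigma)$, I would use that over $\ACAo$ the reflection schemata are monotone in the level: $\Pi^1_{j}\hyp\RFN$ implies $\Pi^1_{i}\hyp\RFN$ for $i\le j$, because truth predicates compose correctly in $\ACAo$. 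So if $n+3\geq k+1$, unprovability of the weaker principle gives unprovability of $\Pi^1_{n+3}\hyp\RFN(T_\sigma)$. If $n+3<k+1$, I would instead rerun the argument of Theorem \ref{generalized incomp} directly. That argument only needs the true sentences $\sigma(\varphi_i)$ to be controlled by the reflection used, so the step $\Pr_{T_\sigma}(\lnot\sigma(\varphi_0)\lor\cdots)\to\lnot\sigma(\varphi_0)\lor\cdots$ needs reflection at the level of $\lnot\sigma$, namely $\Pi^1_{k+1}$. I therefore expect to need $n+3\ge k+1$, or to read the statement with this implicit convention. Making this level bookkeeping match is the delicate point I would check first.

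For the reverse direction of (1), this is exactly Theorem \ref{Unprovability-Soundness-SecondOrder}(1). Its proof mirrors Theorem \ref{Unprovability-Soundness}. Suppose $T$ is not $\Pi^1_{k+1}$-sound, and pick a false $\theta\in\Pi^1_{k+1}$ with $T\vdash\theta$. Set $\sigma=\sigma'\land\lnot\theta$. This is still $\Sigma^1_{k+1}$ and still defines $T$, since $\lnot\theta$ is true. Then $T\vdash T_\sigma=\varnothing$, hence $T\vdash \Pi^1_{n+3}\hyp\RFN(T_\sigma)\leftrightarrow \Pi^1_{n+3}\hyp\RFN(\varnothing)$. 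By Frittaion's lemma the latter is $I\Pi^1_{n+1}$, which $T$ proves, so $T\vdash\Pi^1_{n+3}\hyp\RFN(T_\sigma)$.

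Clause (2) is handled dually. For the forward direction, take $\Gamma=\Pi^1_{k+1}$, so $\check\Gamma=\Sigma^1_{k+1}$, in the second-order Theorem \ref{generalized incomp}. Then monotonicity passes to $\Sigma^1_{n+2}$, with the same level caveat. For the reverse direction, use a false provable $\theta\in\Sigma^1_{k+1}$ and a definition $\pi\land\lnot\theta$, which remains $\Pi^1_{k+1}$. Then apply the second bullet of Frittaion's lemma: $(I\Pi^1_{n+1})^-$ is equivalent to $\Sigma^1_{n+2}\hyp\RFN(\varnothing)$. The hypothesis in Theorem \ref{Unprovability-Soundness-SecondOrder}(2) should quantify over $\Pi^1_{k+1}$-definitions, as it does in the corollary.
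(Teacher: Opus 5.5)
Your proposal follows the paper's own argument: the forward directions come from the second-order form of Theorem \ref{generalized incomp} (the Remark following it), and the converses are Theorem \ref{Unprovability-Soundness-SecondOrder}, whose item (2) should indeed quantify over $\Pi^1_{k+1}$-definitions, as you note. The level bookkeeping you flag is a real issue that the paper's one-line proof silently assumes away: Theorem \ref{generalized incomp} only gives $T\nvdash\Pi^1_{k+1}\hyp\RFN(T_\sigma)$ (resp.\ $T\nvdash\Sigma^1_{k+1}\hyp\RFN(T_\sigma)$), so the forward directions follow by monotonicity only when $k+1\le n+3$ in (1) and $k+1\le n+2$ in (2), and the Niebergall-type example in the paper's Proposition on the strictness of $P_{k+1}(m)\Rightarrow P_{k+1}(m+1)$ indicates that outside this range the forward implication can genuinely fail, so the restriction belongs in the statement.
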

\begin{proof}
  They are immediate from Theorem \ref{generalized incomp} and Theorem \ref{Unprovability-Soundness-SecondOrder}.
\end{proof}

\begin{remark}
  As in Theorem \ref{Unprovability-Soundness}, the assumption $T$ proves sufficiently strong induction is essential for Theorem \ref{Unprovability-Soundness-SecondOrder}. 

  Let $T = \ACAo + \lnot (I\Pi^1_{n+1})^{-} = \ACAo + \lnot \Sigma^1_2\hyp\RFN(\varnothing)$. Since $\lnot \Sigma^1_2\hyp\RFN(\varnothing)$ is of the form $\exists x \Pi^1_{2}$, there is a $\Pi^1_2$ formula $\psi$ such that 
  \begin{align*}
    \ACAo \vdash \lnot \Sigma^1_2\hyp\RFN(\varnothing) \to \psi \text{ and } (\omega,2^{\omega}) \models \psi \to \lnot \Sigma^1_2\hyp\RFN(\varnothing).
  \end{align*}
  Thus, $T$ does not prove $\Sigma^1_2\hyp\RFN(T_{\sigma})$ for any $\sigma$ but $T$ prove a false $\Pi^1_2$-sentence $\psi$.
\end{remark}

\section{omega-model reflections}

Roughly speaking, the second incompleteness theorem says that for any suitable theory $T$, if $T$ is consistent, then $T$ does not prove its own consistency. If $T$ is sufficiently strong to prove the completeness theorem (e.g., $T$ includes $\WKLo$), then the second incompleteness theorem can be rephrased as follows.
\begin{center}
  If $T$ has a model, then $T$ has a model of `$T$ has no model'.
\end{center}
We call this formulation a semantic incompleteness theorem.
For the semantic incompleteness theorem, it is known that `a model' can be replaced with `an $\omega$-model' \cite{Simpson, Steel} or `a $\beta_n$-model' \cite{LutzWalsh,LutzWalshCorrigenda,MummertSimpson}.
In this section, we introduce a more general form of the semantic incompleteness theorem including these results.

The second incompleteness theorem deeply depends on the properties of the provability predicate known as derivability conditions. 

\begin{definition}
   Let $\Box x$ be a formula whose free variable is just $x$.
  In what follows we write $\Box \varphi$ for $\Box \overline{\GN{\varphi}}$.
  Let $T$ be a recursive extension of $\mathsf{PA}$.
  We introduce properties D1, D2 and D3 for $\Box x$ as follows.
   \begin{description}
    \item [(D1)] If $T \vdash \varphi$, then $T \vdash \Box \varphi$,
    \item [(D2)] $T \vdash \Box \varphi \land \Box(\varphi \to \psi) \to \Box\psi$     
    \item [(D3)] $T \vdash \Box \varphi \to \Box\Box \varphi$.
  \end{description}
  We call D1, D2 and D3 derivability conditions.
\end{definition}

It is known that the second incompleteness theorem is obtained from derivability conditions as follows.
\begin{theorem}[Formalized L\"{o}b's theorem] \label{Generalized Lob}
  Assume that $\Box x$ satisfies derivability conditions.
  Then, $T \vdash \Box(\Box \varphi \to \varphi) \to \Box \varphi$ for any $\varphi$. 
\end{theorem}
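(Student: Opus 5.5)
The plan is the standard derivation of Löb's theorem, carried out one level up inside $\Box$. We use the diagonal lemma, available since $T$ extends $\mathsf{PA}$. Fix $\varphi$ and take a sentence $\psi$ with
\[
T \vdash \psi \leftrightarrow (\Box \psi \to \varphi).
\]
First we obtain the usual chain showing that $T \vdash \Box\psi \to \Box\varphi$.
\begin{itemize}
\item From $T \vdash \psi \to (\Box\psi \to \varphi)$, applying D1 and then D2 twice gives $T \vdash \Box\psi \to (\Box\Box\psi \to \Box\varphi)$.
\item D3 gives $T \vdash \Box\psi \to \Box\Box\psi$.
\item Combining the two yields $T \vdash \Box\psi \to \Box\varphi$.
\end{itemize}

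Now we add the hypothesis $\Box(\Box\varphi \to \varphi)$, working inside $\Box$.
\begin{itemize}
\item By D1 applied to the theorem $(\Box\psi\to\Box\varphi)\to((\Box\varphi\to\varphi)\to(\Box\psi\to\varphi))$, and then D2, we get
\[
T \vdash \Box(\Box\psi\to\Box\varphi) \wedge \Box(\Box\varphi\to\varphi) \to \Box(\Box\psi\to\varphi).
\]
\item D1 applied to $T\vdash \Box\psi\to\Box\varphi$ gives $T\vdash\Box(\Box\psi\to\Box\varphi)$. So $T \vdash \Box(\Box\varphi\to\varphi)\to\Box(\Box\psi\to\varphi)$.
\item D1 and D2 applied to the fixed-point equivalence $(\Box\psi\to\varphi)\to\psi$ give $T\vdash \Box(\Box\psi\to\varphi)\to\Box\psi$.
\item Hence $T\vdash \Box(\Box\varphi\to\varphi)\to\Box\psi$.
\item Composing with $T\vdash\Box\psi\to\Box\varphi$ gives $T \vdash \Box(\Box\varphi\to\varphi)\to\Box\varphi$.
\end{itemize}

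The only delicate point is the repeated conversion of provable implications (and of implications between conjunctions) into implications between boxed formulas. This is routine from D1 and D2 via the tautology $\Box(A\to(B\to C))\wedge\Box A\wedge\Box B\to\Box C$. Note that D3 is needed only once, in the first chain. Throughout, $\Box x$ is treated abstractly using only D1–D3 and the diagonal lemma, so the argument applies to any predicate satisfying the derivability conditions. This generality is what is needed later for $\omega$- and $\beta_n$-model versions of $\Box$.
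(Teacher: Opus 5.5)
Your proposal is correct. It is the standard diagonal-lemma derivation of formalized L\"{o}b's theorem from D1--D3, which is what the paper relies on when it states this result as known without giving a proof. One cosmetic quibble: $\Box(A\to(B\to C))\wedge\Box A\wedge\Box B\to\Box C$ is not a tautology but a $T$-provable schema obtained from two applications of D2.
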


Taking $\varphi$ to be $\bot$, a formula representing a contradiction (e.g., $0 = 1$), we obtain the following formalized second incompleteness theorem.
\begin{theorem}[Formalized second incompleteness theorem] 
  Assume that $\Box x$ satisfies derivability conditions.
  Then, $T \vdash \lnot \Box \bot \to \lnot \Box \lnot \Box \bot$.
\end{theorem}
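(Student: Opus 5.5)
The plan is to obtain the statement directly from Theorem \ref{Generalized Lob} by instantiating $\varphi$ as $\bot$ and then rearranging with the derivability conditions inside $T$. Theorem \ref{Generalized Lob} with $\varphi \equiv \bot$ gives
\begin{align*}
  T \vdash \Box(\Box \bot \to \bot) \to \Box \bot .
\end{align*}
Contraposing inside $T$ turns this into $T \vdash \lnot \Box \bot \to \lnot \Box(\Box\bot \to \bot)$. What remains is to replace $\Box(\Box\bot\to\bot)$ by $\Box\lnot\Box\bot$.

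For this I will use D1 and D2. The sentence $\lnot\Box\bot \to (\Box\bot\to\bot)$ is a propositional tautology, so it is provable in $T$, and D1 yields $T \vdash \Box(\lnot\Box\bot \to (\Box\bot \to \bot))$. Applying D2 with $\varphi \equiv \lnot\Box\bot$ and $\psi \equiv \Box\bot\to\bot$ gives $T \vdash \Box\lnot\Box\bot \to \Box(\Box\bot\to\bot)$. Contraposing, $T \vdash \lnot\Box(\Box\bot\to\bot) \to \lnot\Box\lnot\Box\bot$. Chaining this with the contraposed L\"{o}b instance gives $T \vdash \lnot\Box\bot \to \lnot\Box\lnot\Box\bot$, as required.

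Since Theorem \ref{Generalized Lob} may be assumed, there is no real obstacle; the only care needed is to make the two formulas $\lnot\Box\bot$ and $\Box\bot\to\bot$ interchangeable under $\Box$ via D1 and D2, rather than treating them as literally identical. If one preferred not to use the formalized L\"{o}b theorem, the fallback would be the standard argument: take a diagonal sentence $G$ with $T\vdash G\leftrightarrow\lnot\Box G$, and use D1--D3 to derive $T\vdash \Box G\to\Box\bot$ and hence $T\vdash\lnot\Box\bot\to G$. In that route the one delicate step is the use of D3 to get $\Box G \to \Box\Box G$.
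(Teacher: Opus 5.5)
Your proof is correct and follows the paper's route: the paper obtains this statement by taking $\varphi$ to be $\bot$ in Theorem \ref{Generalized Lob}. Your additional D1/D2 step makes explicit the identification of $\Box(\Box\bot\to\bot)$ with $\Box\lnot\Box\bot$, which the paper leaves implicit (it is literally trivial if $\lnot A$ abbreviates $A\to\bot$).
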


In the context of the semantic incompleteness theorem, we interpret $\Box \varphi$ as `$\varphi$ is true in any model with some specific conditions'. For example, if $\Box \varphi$ is `$\varphi$ is true in any model of $T$', then $\lnot \Box \bot \to \lnot \Box \lnot \Box \bot$ means `if $T$ has a model, then $T$ has a model of ($T$ has no model)'. So this is the usual second incompleteness theorem.
We will see that both of Steel's incompleteness theorem for $\omega$-models \cite{Steel} and Lutz-Walsh's incompleteness theorem for $\beta_n$-models \cite{LutzWalsh,LutzWalshCorrigenda} can be seen as variants of this formulation.

\begin{definition}[$\ACAo$]
  Let $\M$ be a coded $\omega$-model and $x$ a code of a formula.
  We write $\M \models x$ if $\M$ is a weak-model of $x$, that is, there is an evaluation $f$ for $(\M,x)$ such that $f(x) = \top$. For the details, see also \cite[VII.2.1]{Simpson}. 
  For a formula $\sigma(x)$, we write $\M \models T_{\sigma}$ if $\forall x (\sigma(x) \to \M \models x)$.
\end{definition}
 
\begin{lemma}
  Let $T_{\sigma}$ be a theory defined by $\sigma(x) \in \Pi^1_{n+1}$.
  Then, the following is provable in $\ACA_0$. Let $\M$ be a coded $\omega$-model and $\M'$ be a coded $\beta_n$-model such that $\M \in \M'$.
  If $\M' \models [\M \models T_{\sigma}]$, then $\M \models T_{\sigma}$. 
\end{lemma}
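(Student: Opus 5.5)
The argument is a short absoluteness computation: pull $\sigma(x)$ down into $\M'$, and pull the satisfaction statement $\M \models x$ back up out of $\M'$. We work in $\ACA_0$ and fix $\M$, $\M'$ as in the statement with $\M' \models [\M \models T_\sigma]$, that is,
\[
\M' \models \forall x\,(\sigma(x) \to \M \models x).
\]
To show $\M \models T_\sigma$, we fix $x$ with $\sigma(x)$ and aim to show $\M \models x$.

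\emph{Step 1 (downward absoluteness of $\sigma$).} We use the standard convention (Simpson VII.7) that a coded $\beta_n$-model $\M'$ is an $\omega$-model in which every $\Sigma^1_n$ formula with parameters from $\M'$ is absolute. Write $\sigma(x) \equiv \forall X\, \theta(x,X)$ with $\theta \in \Sigma^1_n$. For each $X \in \M'$, $\theta(x,X)$ holds, so by $\Sigma^1_n$-absoluteness $\M' \models \theta(x,X)$. Hence $\M' \models \sigma(x)$; for $n=0$ this is just the usual fact that an $\omega$-model reflects arithmetical formulas. In $\ACA_0$ the satisfaction relation of the coded $\omega$-model $\M'$ for formulas of a fixed complexity is available by arithmetical comprehension, so this reasoning formalizes, with $\sigma$ treated as a fixed meta-formula.

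\emph{Step 2 (instantiate the hypothesis).} Since $\M' \models \sigma(x)$ and $\M' \models \forall x(\sigma(x) \to \M\models x)$, we get $\M' \models [\M \models x]$. Unwinding the definition, there is $f \in \M'$ such that $\M'$ satisfies ``$f$ is an evaluation for $(\M,x)$ and $f(x)=\top$''.

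\emph{Step 3 (upward absoluteness of the witness).} The clause ``$f$ is an evaluation for $(\M,x)$ with $f(x)=\top$'' is arithmetical in the parameters $\M, x, f$, all of which lie in $\M'$. Since $\M'$ is an $\omega$-model, arithmetical formulas with parameters from $\M'$ are absolute. Therefore $f$ really is an evaluation for $(\M,x)$ with $f(x)=\top$, so $\M \models x$. As $x$ was arbitrary, $\M \models T_\sigma$.

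The only delicate point is Step 1: being careful about which convention for $\beta_n$-models is used, and checking that the absoluteness of $\Sigma^1_n$ formulas gives downward absoluteness of $\Pi^1_{n+1}$ formulas inside $\ACA_0$. This is exactly why $\sigma$ is required to be $\Pi^1_{n+1}$ and not $\Sigma^1_{n+1}$. Step 3 only needs the $\omega$-model property and is routine.
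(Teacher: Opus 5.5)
Your proof is correct and matches the paper's argument step for step. You use downward absoluteness of the $\Pi^1_{n+1}$ formula $\sigma$ into the $\beta_n$-model $\M'$, then instantiate the hypothesis to obtain an evaluation $f \in \M'$, then use upward absoluteness of the arithmetical statement that $f$ is an evaluation for $(\M,x)$ with $f(x)=\top$; your added remarks on the $\beta_n$ convention and the formalization in $\ACAo$ only spell out what the paper leaves implicit.
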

\begin{proof}
  We work in $\ACAo$. Assume $\M' \models [\M \models T_{\sigma}]$. We will show that $\M \models T_{\sigma}$. That is, $\M \models x$ for any code $x$ of a formula with $\sigma (x)$.
  
  Let $x$ be a code of a formula such that $\sigma(x)$. Since $\sigma$ is $\Pi^1_{n+1}$ and $\M'$ is a $\beta_n$-model, $\M' \models \sigma(x)$. By the assumption $\M' \models [\M \models T_{\sigma}]$, we obtain $\M' \models [\M \models x]$.
  Therefore, there is an $f \in \M'$ such that $\M'$ believes `$f$ is an evaluation for $(\M,x)$ such that $f(x) = \top$'. Since the condition in the quotation marks is arithmetic, it is actually true. Therefore $\M \models x$ actually holds.
\end{proof}

\begin{lemma}[$\ACAo$]
  Let $\M'$ be a $\beta_n$-model and $\M \in \M'$.
  If $\M' \models [\M \text{ is a $\beta_n$-model}]$,
  then $\M'$ is a $\beta_n$-model.
\end{lemma}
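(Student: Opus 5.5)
The conclusion should be read as ``then $\M$ is a $\beta_n$-model''. As literally written, with $\M'$ in the conclusion, the statement merely repeats the hypothesis. The plan is to mirror the preceding lemma: transfer satisfaction facts between $\M$, $\M'$ and the real world. Two ingredients do the work. First, $\M'$ is a $\beta_n$-model, so $\Sigma^1_n$ facts with parameters in $\M'$ are absolute between $\M'$ and the universe. Second, any statement of the form ``$\M \models \varphi$'' is arithmetical in the codes of $\M$ and $\varphi$, and is therefore absolute for every $\omega$-model containing $\M$, in particular for $\M'$.

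We work in $\ACAo$ and use the formalized notion: $\M$ is a $\beta_n$-model iff for every (code of a) $\Sigma^1_n$ formula $\varphi$ with parameters from $\M$, $\varphi$ holds iff $\M \models \varphi$. Fix such a $\varphi$. Every parameter of $\varphi$ is a member of $\M$, and $\M \in \M'$, so each parameter is also an element of $\M'$. Hence $\varphi$ is a $\Sigma^1_n$ formula with parameters from $\M'$, and the $\beta_n$ property of $\M'$ applies to it.

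For the forward direction, suppose $\varphi$ holds. Since $\M'$ is a $\beta_n$-model, $\M' \models \varphi$. Since $\M' \models [\M \text{ is a } \beta_n\text{-model}]$ and $\M'$ recognizes $\varphi$ as a $\Sigma^1_n$ formula with parameters in $\M$, we get $\M' \models [\M \models \varphi]$. This means some $f \in \M'$ is, inside $\M'$, an evaluation for $(\M,\varphi)$ sending $\varphi$ to $\top$. That condition is arithmetical and $\M'$ is an $\omega$-model, so $f$ really is such an evaluation, and $\M \models \varphi$.

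For the converse, suppose $\M \models \varphi$, witnessed by an evaluation $f$. I must show that $\M' \models [\M \models \varphi]$. The cleanest route is to use that the evaluation for $(\M,\varphi)$ is unique and arithmetically definable from $\M$ and $\varphi$. $\M'$ is an $\omega$-model of $\ACAo$ (or at least closed under arithmetical definability), so $f \in \M'$, and $\M'$ sees that it is an evaluation. Then $\M' \models [\M \text{ is a }\beta_n\text{-model}]$ yields $\M' \models \varphi$, and the $\beta_n$ property of $\M'$ gives that $\varphi$ is true.

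The main obstacle is this converse step. It requires that the evaluation witnessing $\M \models \varphi$ actually belongs to $\M'$, which needs $\M'$ to satisfy enough of $\ACAo$. If this is not available, I would instead invoke the convention of the preceding lemma, that $\beta_n$-models are coded $\omega$-models of $\ACAo$. The other delicate point is to carry out the argument uniformly in $\varphi$ inside $\ACAo$, using the satisfaction relations of the coded models rather than a $\Sigma^1_n$ truth predicate. This is possible because every step above only compares $\M \models \cdot$ with $\M' \models \cdot$ and uses the $\beta_n$ property as given.
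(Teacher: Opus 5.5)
Your proposal is correct and is the paper's argument written out in detail. Like you, the paper reads the conclusion as ``$\M$ is a $\beta_n$-model'' and gives only the chain $\theta \leftrightarrow \M' \models \theta \leftrightarrow \M \models \theta$, using the $\beta_n$-property of $\M'$, the internal statement $\M' \models [\M \text{ is a } \beta_n\text{-model}]$, and absoluteness of the arithmetical condition $[\M \models \theta]$ for $\M'$. The paper leaves implicit the point you flag in the converse direction, namely that the evaluation witnessing $\M \models \varphi$ lies in $\M'$. It is covered by the paper's standing convention, used again in (D1), that $\beta_n$-models are models of $\ACAo$; for $n \geq 1$ it also follows from $\Sigma^1_1$-absoluteness of $\M'$ applied to the true statement that such an evaluation exists.
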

\begin{proof}
  Let $\theta$ be a $\Pi^1_{n}$ formula with parameters from $\M$.
  Then, $\theta \leftrightarrow \M' \models \theta \leftrightarrow \M \models \theta$.  Thus we obtain $\theta \leftrightarrow \M \models \theta$. This completes the proof.
\end{proof}

\begin{lemma} \label{box and beta models}
  Let $T_{\sigma}$ be a $\Pi^1_{n+1}$-definable theory defined by $\sigma$. 
  Let $C$ be a new constant symbol.
  Define a formula $\Box x$ by [$x$ is a code of a formula and any coded $\beta_n$-model of $T_{\sigma}$ containing $C$ satisfies $x$].
  Then, $\Box x$ satisfies the derivability conditions over $\ACAo$.
\end{lemma}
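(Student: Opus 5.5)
We verify D1, D2, D3 in turn, working in the theory $T$, which extends $\ACAo$ and in which we treat $C$ as a set constant. Throughout, $\Box x$ means: for every coded $\beta_n$-model $\M$ with $C \in \M$ and $\M \models T_\sigma$, we have $\M \models x$.

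\textbf{D1.} Suppose $T \vdash \varphi$. Then finitely many axioms $\psi_0,\dots,\psi_{m-1}$ of $T$ yield $\varphi$. By $T \supseteq \ACAo$ and the soundness theorem for weak models in $\ACAo$ (cf. \cite[II.8, VII.2]{Simpson}), any coded $\omega$-model satisfying $\psi_0,\dots,\psi_{m-1}$ satisfies $\varphi$. It therefore suffices that $T$ proves $\M \models T_\sigma \to \M \models \psi_i$ for each standard $i$. For this we need $T \vdash \sigma(\psi_i)$. This is the delicate point, since $\sigma(\psi_i)$ is a true $\Pi^1_{n+1}$ sentence but need not be provable.

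I would handle it in one of two ways:
\begin{itemize}
\item assume, as is implicit in the intended use, that $T$ proves $\sigma(\psi)$ for each axiom $\psi$; or
\item replace $\Box$ by the standard trick of requiring $\M$ to satisfy each standard axiom.
\end{itemize}
I expect the paper's convention to make D1 go through this way, so I will state the needed hypothesis explicitly.

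\textbf{D2.} This is the closure of ``$\M \models$'' under modus ponens for weak models. If $\M \models \varphi$ and $\M \models \varphi\to\psi$, then the evaluation for $(\M,\varphi\to\psi)$ restricts and combines to give $\M \models \psi$. All of this is arithmetical in $\M$ and is provable in $\ACAo$, uniformly over all $\M$ in the range of $\Box$.

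\textbf{D3 (the main step).} Fix $\varphi$ and assume $\Box\varphi$. Let $\M'$ be a coded $\beta_n$-model with $C \in \M'$ and $\M' \models T_\sigma$. We must show $\M' \models \Box\varphi$, that is, $\M'$ satisfies: every $\M \in \M'$ which $\M'$ believes to be a $\beta_n$-model of $T_\sigma$ containing $C$ satisfies $\varphi$. Fix such an $\M \in \M'$. We transfer each piece of $\M'$'s belief to the real world:
\begin{itemize}
\item \emph{$\beta_n$-model.} By the preceding lemma on $\beta_n$-models, $\M'$ believing that $\M$ is a $\beta_n$-model makes $\M$ really a $\beta_n$-model. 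The lemma as stated concludes that $\M'$ is a $\beta_n$-model, but its proof shows exactly that $\theta \leftrightarrow \M\models\theta$ for $\Pi^1_n$ formulas $\theta$ with parameters from $\M$.
\item \emph{Containing $C$.} Membership $C \in \M$ is arithmetical, so it is absolute.
\item \emph{Model of $T_\sigma$.} By the first lemma (using $\sigma \in \Pi^1_{n+1}$ and $\M'$ being $\beta_n$), $\M' \models [\M\models T_\sigma]$ gives $\M \models T_\sigma$.
\end{itemize}
Hence $\M$ really is in the range of $\Box$, and $\Box\varphi$ yields $\M \models \varphi$. This last fact is arithmetical in $\M$ and $\varphi$, witnessed by an evaluation $f$. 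To conclude $\M' \models [\M\models\varphi]$ we need $f$, or some evaluation, to lie in $\M'$. Since $\M'$ is a $\beta_n$-model (with $n \ge 0$, so in particular an $\omega$-model of $\ACAo$), it is closed under arithmetical definability from $\M$. So the unique evaluation, which is arithmetical in $\M$, exists in $\M'$, and $\M'$ recognises it as such.

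The main obstacle is exactly this coordination in D3. Downward, $\M'$'s beliefs must be transferred to the real world: the $\beta_n$ property of $\M$ and $\M \models T_\sigma$, using the two preceding lemmas. Upward, the real fact $\M\models\varphi$ must be reflected into $\M'$, which needs $\M'$ to satisfy enough arithmetical comprehension. I would make sure $T_\sigma \supseteq \ACAo$, or else add ``$\M \models \ACAo$'' into the definition of $\Box$, so that this reflection is available.
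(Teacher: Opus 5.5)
Your D2 and D3 are fine and follow the paper's argument. You are also right that the second preliminary lemma should conclude that $\M$, not $\M'$, is a $\beta_n$-model.

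The gap is in D1, and it comes from using the wrong base theory. The lemma asserts the derivability conditions \emph{over $\ACAo$}. So D1 means that $\ACAo \vdash \varphi$ implies $\ACAo \vdash \Box\varphi$, and this is exactly what the formalized L\"{o}b argument behind Theorem~\ref{Formalized incomp} uses. You instead work over a theory $T$ whose axioms you treat as members of $T_\sigma$, and you route D1 through the step $\M \models T_\sigma \Rightarrow \M \models \psi_i$. As you note yourself, that route needs $T \vdash \sigma(\psi_i)$, which is not available. Both of your patches change the statement: the first adds a hypothesis the lemma does not have, and the second replaces $\Box$ by a different formula. So as written, D1 is not established.

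The missing observation is that D1 should not pass through $T_\sigma$ at all. It is provable in $\ACAo$ that every coded $\beta_n$-model is a weak model of $\ACAo$, which is finitely axiomatizable. Now let $p$ be a proof of $\varphi$ from $\ACAo$. Then $\ACAo$ proves that $p$ is such a proof. By the formalized soundness theorem for weak models, $\ACAo$ therefore proves that every weak model of $\ACAo$ satisfies $\varphi$. In particular every coded $\beta_n$-model of $T_\sigma$ containing $C$ satisfies $\varphi$, which is $\Box\varphi$. No assumption on $\sigma$, and no assumption $T_\sigma \supseteq \ACAo$, is needed.

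You already use this very fact in D3 (`$\M'$ is a $\beta_n$-model, so in particular an $\omega$-model of $\ACAo$'). The same fact makes your closing suggestion unnecessary: there is no need to require $T_\sigma \supseteq \ACAo$ or to add `$\M \models \ACAo$' to the definition of $\Box$.
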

\begin{proof}
  We will show that (D1), (D2) and (D3) holds.
  \begin{description}
    \item [(D1)] Assume $\ACAo \vdash \varphi$. 
    We show that $\ACAo \vdash \Box \varphi$.
    We work in $\ACAo$ and prove that any $\beta_n$-model of $T_{\sigma}$ satisfies $\varphi$.
    
    Let $p$ be a proof of $\varphi$ over $\ACAo$. Then $\ACAo$ proves that [$p$ is a proof of $\varphi$ over $\ACAo$]. Hence, $\ACAo$ proves that if $\M$ is a (weak-)model of $\ACAo$, then $\M \models \varphi$.
    Since [any $\beta_n$-model is a (weak-)model of $\ACAo$] is provable in $\ACAo$, $\Box \varphi$ is also provable in $\ACAo$.
    \item [(D2)] This is trivial.
    \item [(D3)] We work in $\ACAo$.
    Assume $\Box \varphi$ holds. 
    Let $\M$ be a coded $\beta_n$-model of $T$ containing $C$. We show that $\M \models \Box \varphi$. 

    Let $\M' \in \M$ such that  $\M \models [\M' \models T]$ and $C \in \M'$.
    Then $\M'$ is a $\beta_n$-model of $T$ by previous lemmas and hence $\M' \models \varphi$. Since $\M' \models \varphi$ is an arithmetical condition, $\M \models [\M' \models \varphi]$.
  \end{description}
      This completes the proof.
\end{proof}

\begin{theorem}[{Formalized $\beta_n$-incompleteness theorem}]\label{Formalized incomp}
  Let $T$ be a $\Pi^1_{n+1}$-definable theory. Then, 
  the following is provable in $\ACAo$ for any set $X$.
  \begin{quotation}
    If $T$ has a coded $\beta_{n}$-model containing $X$, then $T$ + [$T$ has no coded $\beta_n$-model] has a coded $\beta_n$-model containing $X$.
  \end{quotation}
\end{theorem}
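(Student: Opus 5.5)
The plan is to derive the statement from the formalized second incompleteness theorem, applied to the provability-like predicate of Lemma \ref{box and beta models}. Write $T=T_\sigma$ with $\sigma\in\Pi^1_{n+1}$. Working in $\ACAo$, fix $X$ and interpret the constant $C$ as $X$. Let $\Box x$ say: `$x$ is a code of a formula, and every coded $\beta_n$-model of $T_\sigma$ containing $X$ satisfies $x$'. By Lemma \ref{box and beta models}, $\Box$ satisfies (D1), (D2) and (D3) over $\ACAo$. The formalized second incompleteness theorem, a consequence of Theorem \ref{Generalized Lob}, then gives
\begin{align*}
\ACAo \vdash \lnot\Box\bot \to \lnot\Box\lnot\Box\bot .
\end{align*}
Unwinding the definitions:
\begin{itemize}
\item $\lnot\Box\bot$ says that $T$ has a coded $\beta_n$-model containing $X$, which is exactly the hypothesis.
\item $\lnot\Box\lnot\Box\bot$ says that there is a coded $\beta_n$-model $\M$ of $T$ with $X\in\M$ and $\M\models\Box\bot$.
\end{itemize}
One has to check that a coded $\beta_n$-model makes no $\bot$ true, so that $\M\models\Box\bot$ really means `$\M$ believes $T$ has no coded $\beta_n$-model containing $X$'.

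The remaining step is to upgrade $\M\models[\text{$T$ has no coded $\beta_n$-model containing }X]$ to $\M\models[\text{$T$ has no coded $\beta_n$-model}]$. I would not try to argue this inside an arbitrary such $\M$: a model $\NN\in\M$ that $\M$ thinks is a $\beta_n$-model of $T$ need not contain $X$, and $\NN$ cannot simply be enlarged by $X$ while remaining a model of $T$.

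Instead I would change the fixed point. Apply Theorem \ref{Generalized Lob} to the same $\Box$ but with the $X$-free sentence $\psi\equiv$ `$T$ has no coded $\beta_n$-model'. Assume towards a contradiction that $\Box\lnot\psi$ holds, i.e.\ every $\beta_n$-model of $T$ containing $X$ believes that $T$ has some coded $\beta_n$-model. The aim is then to prove $\Box(\Box\bot\to\bot)$ from $\Box\lnot\psi$, i.e.\ that every such model $\M$ satisfies $\lnot\Box\bot$ (satisfies `$T$ has a coded $\beta_n$-model containing $X$'). Löb would then yield $\Box\bot$, contradicting the hypothesis.

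To establish $\M\models\lnot\Box\bot$, I would use the two absoluteness lemmas preceding Lemma \ref{box and beta models}. If $\M\models[\NN\text{ is a }\beta_n\text{-model of }T]$, then $\NN$ really is a $\beta_n$-model of $T$, and truths about $X$ of complexity $\Pi^1_n$ are reflected between $\M$ and $\NN$. Inside $\M$, I would then build a $\beta_n$-model of $T$ containing $X$ by taking the $\beta_n$-model $\NN$ provided by $\lnot\psi$ relativized to $X$: the point is that $\M$ itself, from the outside, is a $\beta_n$-model of $T$ containing $X$, so the $\Sigma^1_{n+1}$-style existence statement `there is a $\beta_n$-model of $T$ containing $X$' should be reflected into $\M$.

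The step I expect to be the main obstacle is this final reflection into $\M$. The statement `there is a coded $\beta_n$-model of $T$ containing $X$' is of complexity about $\Sigma^1_{n+2}$ when $T$ is $\Pi^1_{n+1}$-definable, which is just beyond what $\beta_n$-ness of $\M$ guarantees. My first attempt would be to redo (D3) for the relativized predicate, in the same way Lemma \ref{box and beta models} handles `containing $C$'. If that fails, I would weaken $\psi$ to a Löb sentence asserting the absence of $\beta_n$-models that are comparable with $X$ in the appropriate sense, so that the required closure falls within the $\Pi^1_{n+1}$ absoluteness already available.
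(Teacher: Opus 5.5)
Your first paragraph is the paper's entire proof. The paper takes $\Box$ from Lemma~\ref{box and beta models} with the constant $C$ interpreted as $X$, uses (D1)--(D3), and reads the theorem off $\ACAo\vdash\lnot\Box\bot\to\lnot\Box\lnot\Box\bot$; nothing further is done. The bracket ``[$T$ has no coded $\beta_n$-model]'' in the statement is meant as $\Box\bot$ itself, i.e.\ ``$T$ has no coded $\beta_n$-model containing $X$''. The corollary immediately after the theorem states the conclusion in exactly this form. So you should stop after unwinding $\lnot\Box\lnot\Box\bot$; that already proves what the paper proves.

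The ``remaining step'' is not merely hard. It is false, so neither the modified L\"ob sentence nor the reflection argument you sketch can succeed. Work in $(\omega,2^{\omega})$, take $T=\ACAo$ (which is $\Pi^1_{n+1}$-definable), and let $X$ code some coded $\beta_n$-model of $\ACAo$.
\begin{itemize}
\item The hypothesis holds: $T$ has a coded $\beta_n$-model containing $X$, for instance a coded countable elementary submodel of $(\omega,2^{\omega})$ containing $X$.
\item Every coded $\beta_n$-model $\M\ni X$ of $\ACAo$ satisfies ``$X$ is a coded $\beta_n$-model of $\ACAo$''. This is because $X\models\ACAo$ is arithmetical in $X$, and ``$X$ is a $\beta_n$-model'' is $\Pi^1_n$ in $X$, hence absolute for $\M$.
\end{itemize}
Consequently no $\beta_n$-model of $T$ containing $X$ satisfies your $X$-free sentence $\psi$, and the $X$-free reading of the theorem fails in a model of $\ACAo$. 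In terms of your plan, here $\Box\lnot\psi$ holds while $\Box\bot$ fails. By L\"ob, $\Box(\Box\bot\to\bot)$ then also fails, so the derivation of $\Box(\Box\bot\to\bot)$ from $\Box\lnot\psi$ must break down. It breaks exactly where you suspected: a $\beta_n$-model $\NN\in\M$ of $T$ cannot in general be replaced by one containing $X$.
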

\begin{proof}
  Let $\Box x$ as in the previous lemma. Then, $\Box x$ satisfies derivability conditions. Thus, 
  $\ACAo \vdash \lnot \Box \bot \to \lnot \Box \lnot \Box \bot$. This completes the proof.
\end{proof}

\begin{remark}
  In \cite{MummertSimpson}, it is stated that the previous theorem can be proved in $\ACA_0'$ if $T$ is $\Sigma^0_1$-definable.  Our result extends this theorem. The formalized version of the $\beta_n$-incompleteness theorem is not explicitly argued in \cite{LutzWalsh}, but it is not hard to formalize their proof in $\ACAo$.
\end{remark}

\begin{remark}
  As we have mentioned before, the proofs in Simpson's monograph \cite{Simpson} and the paper by Mummert and Simpson \cite{MummertSimpson} are based on the usual incompleteness theorem.
  Thus, in these proofs, $T$ is assumed to be $\Sigma^0_1$-definable. If we use Theorem \ref{generalized incomp} instead of the usual incompleteness theorem, the same proof works for Theorem \ref{Formalized incomp}.
\end{remark}

As stated in Theorem \ref{Formalized incomp}, the $\beta_n$-incompleteness theorem can be formalized in $\ACAo$.
Since a coded $\beta_n$-model is a model of $\ACAo$, we can use it in coded $\beta_n$-models.
In this way, we can obtain a stronger result.
\begin{corollary}
  Let $T$ be a $\Pi^1_{n+1}$-definable theory.
  Then, the following is provable in $\ACAo$ for any set $X$.
  \begin{quotation}
  If $\M$ is a coded $\beta_n$-model of $T$ containing $X$, then we can find an $\M$-computable $\beta_n$-model $\M'$ of $T$ satisfying [$T$ has no coded $\beta_n$-model containing $X$] and containing $X$.
  \end{quotation}
\end{corollary}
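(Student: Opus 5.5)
The plan is to apply Theorem \ref{Formalized incomp} \emph{inside} the given model $\M$ and then transfer the conclusion back out. We work in $\ACAo$ and fix a coded $\beta_n$-model $\M$ of $T$ with $X \in \M$. Since a coded $\beta_n$-model is in particular an $\omega$-model of $\ACAo$, and Theorem \ref{Formalized incomp} is a single theorem of $\ACAo$ (one instance for the fixed $\Pi^1_{n+1}$ formula $\sigma$ defining $T$, universally quantified over $X$), we get $\M \models$ [for all $X$, if $T$ has a coded $\beta_n$-model containing $X$ then $T$ + [$T$ has no coded $\beta_n$-model containing $X$] has one]. Here I would use the relativized version: run the proof of Lemma \ref{box and beta models} with the $\beta_n$-models required to contain the constant $C$, and interpret $C$ as $X$ inside the negated box.

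For the hypothesis, we need to produce inside $\M$ a coded $\beta_n$-model of $T$ containing $X$, and this is the main obstacle, since $\M$ itself is not an element of $\M$. I would first try the following. In $\ACAo$, the statement ``there is a coded $\beta_n$-model of $T$ containing $X$'' is (for $n\ge 1$) a $\Sigma^1_{n+1}$-ish statement about $X$: being a $\beta_n$-model is $\Pi^1_n$-definable over the code, and satisfying $T_\sigma$ involves $\sigma\in\Pi^1_{n+1}$. Using $\M$ as a witness, it is true; the task is to reflect it into $\M$.

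If $\beta_n$-absoluteness only covers $\Sigma^1_{n+1}$ statements of lower complexity, the fallback is to argue by contradiction with the dichotomy. Suppose $\M \models$ [$T$ has no coded $\beta_n$-model containing $X$]. Then $\M$ itself is the desired $\M'$, since it is trivially $\M$-computable, satisfies $T$, and contains $X$. Otherwise $\M$ believes such a model exists, and the formalized theorem inside $\M$ yields $\M'' \in \M$ with $\M \models [\M'' \text{ is a } \beta_n\text{-model of } T + \text{no model containing } X]$ and $X \in \M''$.

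To finish, we transfer from $\M$ to the real world. By the lemma before Lemma \ref{box and beta models} (the one using that $\sigma$ is $\Pi^1_{n+1}$ and $\M$ is $\beta_n$), $\M''$ is truly a model of $T$. By the next lemma, $\M''$ is truly a $\beta_n$-model. Satisfaction of the extra sentence by $\M''$ is arithmetical in $\M''$ and the satisfaction function in $\M$, so it holds externally. Since $\M'' \in \M$, it is $\M$-computable (a finite-coordinate projection of the code $\M$). This case split makes the case analysis itself the proof, and it avoids the absoluteness worry entirely.
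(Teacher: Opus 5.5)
Your proposal is correct and is essentially the paper's proof. The paper likewise obtains $\M \models \lnot\Box\bot \to \lnot\Box\lnot\Box\bot$ via (D1), using that $\M$ is a $\beta_n$-model of $T$ containing $X$. It then splits on whether $\M \models \Box\bot$, in which case $\M$ itself works. Otherwise it takes the witness $\M'' \in \M$, which is a column of $\M$ and hence $\M$-computable.

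Your explicit transfer of ``$\M''$ is a $\beta_n$-model of $T$ containing $X$ and satisfying $\Box\bot$'' from inside $\M$ to the real world fills in a step the paper leaves implicit. You do this via the two preceding lemmas plus arithmetical absoluteness. You were also right to abandon the initial idea of reflecting the existence of a model into $\M$ in favour of this case split.
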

\begin{proof}
  Let $\Box x$ be as in the previous theorem.
  In the following argument, we work in $\ACAo$.
  
  Let $\M$ be a $\beta_n$-model of $T$ containing $X$. We show that there exists $\M' \leq_{\T} \M$ such that 
  $\M'$ is a model of `$T$ + there is no coded $\beta_n$-model of $T$'. By the previous theorem, $\ACAo \vdash \lnot \Box \bot \to \lnot \Box \lnot \Box \bot$.
  By (D1), we obtain $\Box(\lnot \Box \bot \to \lnot \Box \lnot \Box \bot)$ and hence $\M \models \lnot \Box \bot \to \lnot \Box \lnot \Box \bot$. Now there are two possibilities: $\M \models \Box \bot$ or $\M \models \lnot \Box \bot$.

Assume $\M \models \Box \bot$. Then $\M$ itself is a $\beta_n$-model of `$T$ + there is no coded $\beta_n$-model of $T$'.

Assume $\M \models \lnot \Box \bot$. Then $\M \models \lnot \Box \lnot \Box \bot$. Hence, there is a $\beta_n$-model $\M' \in \M$ of $T$ such that $\M' \models \Box \bot$. This $\M'$ is a $\beta_n$-model of `$T$ + there is no coded $\beta_n$-model of $T$'.
\end{proof}

\begin{remark}
  For a $\beta_n$-model $\M$ and a $\Pi^1_{n+1}$-definable theory $T_{\sigma}$, $\M \models \text{`there is no $\beta_n$-model of $T_{\sigma}$'}$ does not mean there is no $\M' \in \M$ such that $\M' \models T_{\sigma}$. This is because the theory ${T_{\sigma}}^{\M} = \{x : \M \models \sigma(x)\}$ which $\M$ regards as $T_{\sigma}$, may be larger than the actual $T_{\sigma}$. On the other hand, if $\sigma$ has sufficiently low complexity so that $\sigma$ is absolute for $\M$, then $\M \models \text{`there is no $\beta_n$-model of $T_{\sigma}$'}$ is equivalent to the condition that there is no $\beta_n$-model $\M'$ of $T_{\sigma}$  such that $\M' \in \M$. Thus, the following corollary holds.
\end{remark}

\begin{corollary}
Let $T$ be an $A(\Pi^1_n)$-definable theory. Here $A(\Pi^1_n)$ is the class of formulas constructed by $\lnot, \land, \forall x$ from $\Pi^1_n$ formulas. Then, the following is provable in $\ACAo$.
  \begin{quotation}
    If $\M$ is a coded $\beta_n$-model of $T$, then we can find an $\M$-computable $\beta_n$-model $\M'$ of $T$ such that no $\M'' \in \M$ is a $\beta_n$-model of $T$.
  \end{quotation}
\end{corollary}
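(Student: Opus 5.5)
The plan is to deduce this from the preceding corollary together with the absoluteness observation in the remark just before the statement. Any $A(\Pi^1_n)$-formula is in particular $\Pi^1_{n+1}$ (indeed $\Delta^1_{n+1}$ over $\ACAo$, since $\Pi^1_n$ and $\Sigma^1_n$ formulas are both $\Pi^1_{n+1}$ and number quantifiers, conjunctions and negations stay inside this class). So $T=T_\sigma$ is $\Pi^1_{n+1}$-definable, and the previous corollary applies. Working in $\ACAo$, given a coded $\beta_n$-model $\M$ of $T$, it yields an $\M$-computable coded $\beta_n$-model $\M'$ of $T$ with $\M' \models$ `there is no coded $\beta_n$-model of $T_\sigma$'. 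I read the conclusion as concerning $\M'' \in \M'$; for $\M''\in\M$ literally, the degenerate case $\M'=\M$ below does the job, and I will check which reading the statement intends.

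It remains to turn the internal statement in $\M'$ into the external one: no $\M'' \in \M'$ is a $\beta_n$-model of $T$. Fix $\M'' \in \M'$ and suppose $\M''$ is a coded $\beta_n$-model of $T$. I will show that $\M'$ then believes `$\M''$ is a coded $\beta_n$-model of $T_\sigma$', which is a contradiction. There are three ingredients.

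First, $\M''$ being a coded $\omega$-model is arithmetic, hence absolute. Second, $\M''$ being a $\beta_n$-model transfers into $\M'$ because $\M'$ is itself a $\beta_n$-model. The condition `for every $\Sigma^1_n$ sentence $\theta$ with parameters in $\M''$, $\theta \leftrightarrow \M''\models\theta$' is $\Pi^1_n$ in the parameter $\M''$, via the $\Sigma^1_n$ truth predicate available in $\ACAo$ relativized appropriately. So it holds in $\M'$. If the exact definition of $\beta_n$-model used makes this condition slightly more complex, I will use the converse direction of the earlier lemma, namely that $\beta_n$-ness of a model in a $\beta_n$-model reflects.

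Third, I must show $\M' \models [\M''\models T_\sigma]$, i.e.\ for every $x$ with $\M'\models\sigma(x)$ we have $\M'\models[\M''\models x]$. This is the step where the $A(\Pi^1_n)$ hypothesis is used. Since $\M'$ is a $\beta_n$-model, it is elementary for $\Pi^1_n$ and $\Sigma^1_n$ formulas with parameters in $\M'$. The class of formulas absolute for $\M'$ is closed under $\lnot,\land$ and number quantifiers, the last because $\M'$ is an $\omega$-model. Hence $\M'\models\sigma(x)$ iff $\sigma(x)$, so ${T_\sigma}^{\M'} = T_\sigma$. Then for such $x$ we have $\M''\models x$ genuinely, witnessed by an evaluation $f$. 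The evaluation is arithmetic in $\M''$ and $x$, and $\M'$ is an $\omega$-model of $\ACAo$, so $f\in\M'$ and $\M'\models[\M''\models x]$.

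Doing this absoluteness induction inside $\ACAo$ uniformly for the metaformula $\sigma$ is routine, by external induction on the construction of $\sigma$. Combining the three ingredients gives the contradiction, which completes the argument.

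The main obstacle I expect is the third step, specifically making sure that $\M''\models T_\sigma$ transfers into $\M'$ rather than only reflects out of it; this is exactly what fails for general $\Pi^1_{n+1}$ definitions, as the remark notes. If the model $\M'$ produced is $\M$ itself (the case $\M\models\Box\bot$ in the proof of the previous corollary), the same argument applies verbatim with $\M'=\M$.
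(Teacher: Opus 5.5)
Your proposal is correct and follows the paper's own route, which is the remark preceding the statement. You apply the preceding corollary, then use that an $A(\Pi^1_n)$ definition $\sigma$ is absolute for the $\beta_n$-model $\M'$, so ${T_\sigma}^{\M'}=T_\sigma$ and the internal statement `there is no coded $\beta_n$-model of $T_\sigma$' rules out any $\beta_n$-model $\M''\in\M'$ of $T$.

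You are right that $\M''\in\M'$ is the intended reading. However, the degenerate case $\M'=\M$ does not rescue the literal $\M''\in\M$ version. When $\M\models\lnot\Box\bot$, the model $\M'\in\M$ that is produced is itself a $\beta_n$-model of $T$, so the literal version is false in general.
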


\begin{remark}
  The notions of $\omega$-models of $\ACAo$ and $\beta_0$-models are the same. Thus, our result includes Steel's incompleteness for $\omega$-models.
\end{remark}

In \cite{Steel}, Steel proved a kind of well-foundedness property of Turing degrees, which implies a well-founded property of coded $\omega$-models. Using this result, he obtained his incompleteness theorem. In \cite{LutzWalsh}, Lutz and Walsh used a similar strategy. They proved the well-foundedness of hyperdegrees within $\ACAo$, and proved their incompleteness theorem using the well-foundedness of $\beta$-models obtained from that of hyperdegrees. 
We point out that starting from the well-foundedness of $\beta$-models, we can obtain a somewhat simpler proof for the well-foundedness of hyperdegrees. We note that the following proof is based on essentially the same idea as the proof of \cite[Theorem 1.1]{LutzWalsh}.

\begin{theorem}[$\ACAo$]\label{well-foundedness of hyperdegrees}
  There is no sequence of $\beta$-models $\langle \M_i \rangle_i$ such that $\M_0 \ni \M_1 \ni \M_2 \ni \cdots$
\end{theorem}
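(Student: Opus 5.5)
The plan is to derive the statement from the formalized $\beta_n$-incompleteness theorem (Theorem \ref{Formalized incomp}) with $n=1$, using a self-referential theory. Work in $\ACAo$ and suppose, for a contradiction, that $\langle \M_i\rangle_i$ is a sequence of coded $\beta$-models with $\M_{i+1}\in\M_i$ for all $i$. By the diagonal lemma, fix a sentence $\gamma$ with
\[
\ACAo \vdash \gamma \leftrightarrow [\text{there is a coded $\beta$-model of } \ACAo+\gamma],
\]
and put $T=\ACAo+\gamma$. Then $T$ is finitely axiomatized, hence trivially $\Pi^1_2$-definable, and $T$ proves its own $\beta$-consistency. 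Theorem \ref{Formalized incomp} says that if $T$ has a coded $\beta$-model, then so does $T+[T$ has no coded $\beta$-model$]$. That theory is inconsistent, and a coded $\beta$-model of it is in particular a weak model of an inconsistent finite theory, which is impossible in $\ACAo$. Hence $\ACAo$ proves that $T$ has no coded $\beta$-model. It therefore suffices to show that some $\M_i$ satisfies $T$.

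The transfer step uses the absoluteness lemmas already proved. Since $\M_i$ is a $\beta$-model, it satisfies $\ACAo$. The statement ``$\M_{i+1}$ is a $\beta$-model'' has parameters in $\M_i$ and is of low enough projective complexity to be reflected into $\M_i$. The converse direction, that being a $\beta$-model according to $\M_i$ implies being one in reality, is the second lemma before Lemma \ref{box and beta models}. Satisfaction ``$\M_{i+1}\models\varphi$'' is arithmetical in the codes, so it is absolute between $\M_i$ and the universe. Consequently, for each $i$,
\[
\M_i\models\gamma \iff \M_{i+1}\models \gamma .
\]
The forward direction uses the first lemma before Lemma \ref{box and beta models}: if $\M_i$ thinks some $\M''\in\M_i$ is a $\beta$-model of $T$, then $\M''$ really is one, which already contradicts the previous paragraph. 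The backward direction uses the witness $\M_{i+1}$. By arithmetical comprehension relative to the sequence, the set $A=\{i:\M_i\models\gamma\}$ exists, and it is closed under successor and predecessor. By $\Sigma^0_1$ induction, $A$ is therefore either empty or all of $\omega$.

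The main obstacle is excluding $A=\varnothing$. The simple fixed point above only yields the dichotomy, because the infinite regress never grounds the truth of $\gamma$ in any single model. To resolve this, I would choose the fixed point so that the relevant property is \emph{forced} by having a $\beta$-model below. A first candidate is to replace the quantifier ``there is a $\beta$-model of $T$'' with ``there is a $\beta$-model of $T$ containing $C$'', as in Lemma \ref{box and beta models}, and to relativize to a parameter, since Theorem \ref{Formalized incomp} allows parameters $X$. A second candidate is the Löb form: use the $\Box$ of Lemma \ref{box and beta models} with $T=\ACAo$ and apply Theorem \ref{Generalized Lob} to a sentence $\varphi$ such that every $\M_i$ refutes $\Box\varphi$, namely ``$\M_i$ contains a $\beta$-model $\M_{i+1}$ failing $\varphi$''. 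Then $\lnot\Box\varphi$ holds in every $\M_i$. The aim is to show that this is incompatible with $\Box(\Box\varphi\to\varphi)\to\Box\varphi$ evaluated inside $\M_0$, using (D1) to push the formalized Löb theorem into $\M_0$. I expect that choosing $\varphi$ correctly, so that the descending chain supplies the needed instance of $\Box(\Box\varphi\to\varphi)$ in $\M_0$, is exactly where the real work lies. I would try $\varphi\equiv\bot$ first, and if that fails, a sentence asserting that no infinite descending chain is visible.
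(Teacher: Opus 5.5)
Your proposal has a genuine gap. The descending chain is never used to produce a contradiction, and the step you call the ``main obstacle'' is a dead end.

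\begin{itemize}
\item Your first paragraph already shows $\ACAo\vdash\lnot\gamma$. Indeed, $T=\ACAo+\gamma$ proves that it has a coded model, hence proves its own consistency, so it is inconsistent.
\item Each $\M_i$ is a model of $\ACAo$, so every $\M_i$ satisfies $\lnot\gamma$. Your ``forward direction'' proves exactly this.
\item So $A=\varnothing$ is a theorem, not something to exclude. The equivalence $\M_i\models\gamma\iff\M_{i+1}\models\gamma$ relates two false statements.
\item The reduction to ``some $\M_i$ satisfies $T$'' therefore asks for something as hard as the theorem itself.
\end{itemize}
The underlying problem is that transferring a property one step at a time along the chain never lets a single model see the whole infinite chain. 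The Löb variant with $\varphi\equiv\bot$ fails for the same reason. Inside $\M_0$ it only produces some model that $\M_0$ believes is a $\beta$-model containing no $\beta$-model, and nothing ties that model to your chain.

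The missing idea is to make the chain hypothesis itself the self-reproducing statement. Let $\mathsf{DS}'$ assert that there is a sequence of coded $\beta$-models with $\M_{i+1}\in\M_i$. The paper shows that $\ACAo+\mathsf{DS}'$ proves $\M_0\models\ACAo+\mathsf{DS}'$. Hence this finitely axiomatized theory proves its own consistency, and it is inconsistent by the ordinary second incompleteness theorem.

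The key step is a single $\Sigma^1_1$ reflection. Consider the sentence
\[
\exists \langle \NN_i\rangle_i\,\forall i\,\bigl(\NN_{i+1}\in\NN_i\in\M_1\land \M_1\models\beta(\NN_i)\bigr).
\]
\begin{itemize}
\item It has an arithmetical matrix and the parameter $\M_1\in\M_0$, so it is $\Sigma^1_1$.
\item It is true, witnessed by $\langle\M_{i+2}\rangle_i$. Each $\M_{i+2}$ lies in $\M_1$, and $\M_1$, being a $\beta$-model, satisfies the true $\Pi^1_1$ statement $\beta(\M_{i+2})$.
\item Since $\M_0$ is a $\beta$-model, the sentence holds in $\M_0$.
\item Inside $\M_0$, $\langle\M_1\rangle\ast\langle\NN_i\rangle_i$ is then a descending chain that $\M_0$ believes consists of $\beta$-models. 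This uses, applied inside $\M_0$, the lemma that whatever a $\beta$-model believes to be a $\beta$-model really is one.
\end{itemize}

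Once this reflection step is available, your last suggestion (a $\varphi$ saying there is no descending chain) would also work via Löb's theorem, reading $\Box$ as truth in all coded $\beta$-models of $\ACAo$. The reflection step gives $\ACAo\vdash\Box\lnot\mathsf{DS}'\to\lnot\mathsf{DS}'$. But that step is exactly what your proposal lacks, and without it neither route goes through.
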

\newcommand{\DS}{\mathsf{DS}}
\begin{proof}
  Let $\DS'$ be the statement that there is a sequence of  $\beta$-models $\langle \M_i \rangle_i$ such that $\forall i (\M_i \ni \M_{i+1})$. We show that $\ACAo + \DS'$ proves the consistency of $\ACAo + \DS'$. Then  $\ACAo \vdash \lnot \DS'$ because $\ACAo + \DS'$ is inconsistent. In the following, we work in $\ACAo + \DS'$. Let $\langle \M_i \rangle_i$ be a witness of $\DS'$.

  Let $\beta(X)$ be the formula stating that $X$ is a $\beta$-model.
  Now we have $\exists \langle \NN_i \rangle_i \forall i (\NN_{i+1} \in \NN_{i} \in \M_1 \land \M_1 \models \beta(\NN_i)  )$ by taking $\langle \NN_i \rangle_{i} = \langle M_{i+2} \rangle_i$. Since $\M_0$ is a $\beta$-model, we obtain $\M_0 \models \exists \langle \NN_i \rangle_i \forall i (\NN_{i+1} \in \NN_{i} \in \M_1 \land \M_1 \models \beta(\NN_i)  ) $. 
  Then, the sequence $A = \langle \M_1 \rangle \ast \langle \NN_i \rangle_i$, the concatenation of $\langle M_1 \rangle$ and $\langle \NN_i \rangle_i$, exists in $\M_0$. Them $\M_0$ believes $A$ is a descending sequence of $\beta$-models because  $\M_1$ is a $\beta$-submodel of $\M_0$ and each $\NN_i$ is a $\beta$-submodel of $\M_1$. Thus, $\M_0$ is a model of $\ACAo +  \DS'$.
\end{proof}

The following is Theorem 1.1 in \cite{LutzWalsh}.
\begin{corollary}[$\ACAo$]
  The hyperdegrees are well-founded. That is, there is no sequence $\langle X_i \rangle_i$ such that 
  $\forall i \exists H(H = \mathcal{O}^{X_{i+1}} \land H \leq_{\H} X_{i}) $.
\end{corollary}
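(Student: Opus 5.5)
We derive the corollary from Theorem \ref{well-foundedness of hyperdegrees} by turning a descending sequence in the hyperdegrees into a descending $\in$-chain of coded $\beta$-models. We argue in $\ACAo$ and by contradiction. Suppose $\langle X_i \rangle_i$ satisfies $\forall i \exists H(H = \mathcal{O}^{X_{i+1}} \land H \leq_{\H} X_i)$. The goal is to build a sequence $\langle \M_i \rangle_i$ of coded $\beta$-models with $\M_i \ni \M_{i+1}$ for all $i$, which the theorem forbids.

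\textbf{Step 1: a $\beta$-model coded from a hyperjump.} We use the standard fact (Simpson, Chapter VII), provable in $\ACAo$, that from $\mathcal{O}^{Y}$ one can uniformly compute a coded $\beta$-model containing $Y$. For instance, one may take the model whose members are the sets hyperarithmetic in $Y$, or use the $\Sigma^1_1$-absoluteness of such a model relative to $\mathcal{O}^Y$. Uniformity means there is a single Turing functional $\Phi$ with $\M(Y) := \Phi(\mathcal{O}^Y)$.

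\textbf{Step 2: the chain.} Set $\M_i := \M(X_{i+1})$, which uses the existence of $\mathcal{O}^{X_{i+1}}$ given by the hypothesis. We must check that $\M_{i+1} \in \M_i$. The model $\M_{i+1}$ is computable from $\mathcal{O}^{X_{i+2}}$, and by hypothesis $\mathcal{O}^{X_{i+2}} \leq_{\H} X_{i+1}$. Since $\M_i$ contains every set hyperarithmetic in $X_{i+1}$, it contains $\mathcal{O}^{X_{i+2}}$ and hence $\M_{i+1}$.

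\textbf{Step 3: forming the sequence in $\ACAo$.} The whole sequence $\langle \M_i \rangle_i$ must exist as a single set. Here we must be careful, because the sequence $\langle \mathcal{O}^{X_{i+1}} \rangle_i$ is not obviously given. I would handle this by rewriting the hypothesis so that it supplies, arithmetically in $\langle X_i \rangle_i$, the witnesses $H_i$ together with hyperarithmetic reductions. If that fails, I would strengthen the statement being refuted to one that includes the sequence of witnesses $H_i$, as in the formulation of $\DS'$. Applying Theorem \ref{well-foundedness of hyperdegrees} then gives the contradiction.

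The \textbf{main obstacle} is Step 1 together with Step 2 inside $\ACAo$: showing that the coded model built from $\mathcal{O}^Y$ really is a $\beta$-model, and that it contains every set $\leq_{\H} Y$, where $\leq_{\H}$ is formalized via $\Delta^1_1$-definability or via $\mathcal{O}^Y$. I would settle this by choosing $\M(Y) = \{Z : Z \leq_{\T} \text{some } H_a^Y,\ a \in \mathcal{O}^Y\}$. I would then cite the $\ACAo$-provable fact that, given $\mathcal{O}^Y$, this model is a $\beta$-model, following Simpson's treatment of $\mathrm{HYP}(Y)$ and hyperjumps.
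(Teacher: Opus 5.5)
\textbf{Gap 1: Step 3 fails, and neither fallback repairs it.} The hypothesis gives each $\mathcal{O}^{X_{i+1}}$ only by a separate existential. $\ACAo$ offers no principle for collecting these into one set $\langle \mathcal{O}^{X_{i+1}} \rangle_i$, so the sequence $\langle \Phi(\mathcal{O}^{X_{i+1}}) \rangle_i$ need not exist, however uniform $\Phi$ is. Nothing in the hypothesis ``supplies the $H_i$ arithmetically.'' Adding the sequence of witnesses to the statement being refuted only yields a weaker corollary than the one stated.

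The missing idea is that a single hyperjump, $\mathcal{O}^{X_1}$, suffices. Take a coded $\beta$-model $\M_1 \ni X_1$ with $\M_1 \leq_{\T} \mathcal{O}^{X_1}$, and build the whole chain \emph{inside} $\M_1$:
\begin{itemize}
\item Let $\NN \in \M_1$ with $X_i \in \NN$ and $\M_1 \models \beta(\NN)$. Then $\NN$ really is a $\beta$-model, because $\beta(\cdot)$ is $\Pi^1_1$ and $\M_1$ is a $\beta$-model.
\item Hence $\mathcal{O}^{X_{i+1}} \in \NN$, since $\mathcal{O}^{X_{i+1}} \leq_{\H} X_i$.
\item So some $\beta$-model $\NN' \leq_{\T} \mathcal{O}^{X_{i+1}}$ with $X_{i+1} \in \NN'$ lies in $\NN$, and $\M_1 \models \beta(\NN')$.
\end{itemize}
Because $\M_1$ is coded, the condition $X_{i+1} \in \NN' \in \NN \land \M_1 \models \beta(\NN')$ is arithmetical in $\M_1 \oplus \langle X_i \rangle_i$. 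Choosing least indices therefore defines the chain recursively in $(\M_1 \oplus \langle X_i \rangle_i)^{(k)}$ for a fixed $k$. The chain exists in $\ACAo$ (its well-definedness at every step follows by arithmetical induction), consists of genuine $\beta$-models, and contradicts Theorem \ref{well-foundedness of hyperdegrees}. This is the paper's argument.

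\textbf{Gap 2: the model you choose in Step 1 is not a $\beta$-model.} The set $\{Z : Z \leq_{\T} H^Y_a \text{ for some } a \in \mathcal{O}^Y\}$ is $\mathrm{HYP}(Y)$. By the relativized Kleene tree (a $Y$-recursive tree with an infinite path but no path hyperarithmetic in $Y$), there is a true $\Sigma^1_1$ sentence that fails in $\mathrm{HYP}(Y)$. So the ``$\ACAo$-provable fact'' you plan to cite is false.

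What is true, and enough for your purposes, is:
\begin{itemize}
\item from $\mathcal{O}^Y$ one can compute \emph{some} coded $\beta$-model containing $Y$;
\item every coded $\beta$-model containing $Y$ contains every set hyperarithmetic in $Y$, since $\Sigma^1_1$-absoluteness lets it capture $\Delta^1_1$-definable sets.
\end{itemize}
With this replacement, your Step 2 goes through.
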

\begin{proof}
  Assume such a sequence $\langle X_i \rangle_i$ exists.
  Then, there is a $\mathcal{O}^{X_1}$-computable $\beta$-model $\M_1 \ni X_1$. We have \begin{align*}
    &(1)\;\exists \M_2(X_2 \in \M_2 \in \M_1 \land \M_1\models \beta(\M_1)), \\
    &(2)\;\forall \NN \forall i > 1 (X_i \in \NN \in \M_1 \land \M_1 \models \beta(\NN) \to \exists \NN' (X_{i+1} \in \NN' \in \NN \land \M_1 \models \beta(\NN'))).
  \end{align*} 
  (1) Note that $\mathcal{O}^{X_{2}} \in \M_1$ because $\mathcal{O}^{X_2} \leq_{\H} X_1 \in \M_1$. Thus, there exists a $\beta$-model $\M_2 \leq_{\T} \mathcal{O}^{X_2}$ such that $X_2 \in \M_2$. Then $\M_2 \in \M_1$ because $\mathcal{O}^{X_2} \in \M_1$. (2) is proved a similar argument.

  By (1) and (2), we can find a sequence of $\beta$-model $\langle M_{i+2} \rangle_i \leq_{\T} (\M_1 \oplus \langle X_i \rangle_i)^{(k)}$ for sufficiently large $k$ such that $\forall i(\M_{i+1} \ni \M_i)$ where $\bullet^{(k)}$ denotes the $k$-th Turing jump of $\bullet$. This contradicts Theorem \ref{well-foundedness of hyperdegrees}.
\end{proof}

\begin{corollary}[$\ACAo$]
  Let $\M$ be a coded $\beta$-model. Define a Kripke frame $F = (W,R)$ by $W = \{i : \M \models \beta(\M_i) \}$ where $\beta(X)$ is the formula asserting `$X$ is a $\beta$-model', and $i R j \leftrightarrow \M_i \ni \M_j$ for $i,j \in W$.   
  Then, $R$ is converse well-founded by Theorem \ref{well-foundedness of hyperdegrees}. In addition, it is easy to see that $R$ is transitive. Thus, $F$ is a $\GL$-frame.
\end{corollary}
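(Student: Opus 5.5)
We have to show that $F=(W,R)$ is a $\GL$-frame, i.e.\ $R$ is transitive and converse well-founded. Here I read $\M_i$ as the $i$-th set coded in $\M$, and I work in $\ACAo$ throughout.

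\emph{Transitivity.} Suppose $i R j$ and $j R k$, so $\M_i \ni \M_j$ and $\M_j \ni \M_k$. Then $\M_k = (\M_j)_a$ for some $a$, and $\M_j = (\M_i)_b$ for some $b$. Since $\M_j$ is literally a set of the coded model $\M_i$, its $a$-th coded set is the $\langle b,a\rangle$-th coded set of $\M_i$ under the standard pairing. Hence $\M_k \in \M_i$. This is purely syntactic about codes, so no appeal to $\beta$-models is needed, and we get $i R k$.

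\emph{Converse well-foundedness.} Suppose toward a contradiction that there is an infinite sequence $i_0 R i_1 R i_2 R \cdots$ in $W$. Put $\NN_m = \M_{i_m}$, so $\NN_0 \ni \NN_1 \ni \cdots$. We want this to be a descending sequence of actual $\beta$-models, so that it contradicts Theorem~\ref{well-foundedness of hyperdegrees}.

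The sequence $\langle \NN_m\rangle_m$ exists by $\Delta^0_1$ comprehension from $\M$ and $\langle i_m\rangle_m$. Each $i_m \in W$ gives $\M \models \beta(\NN_m)$, and $\M$ is a $\beta$-model containing $\NN_m$. Applying the earlier lemma (if $\M' \models [\M'' \text{ is a } \beta_n\text{-model}]$ for a $\beta_n$-model $\M'$ and $\M''\in\M'$, then $\M''$ is a $\beta_n$-model) with $n$ the $\beta$-level in question, each $\NN_m$ is a genuine $\beta$-model. Relativizing to $\beta_n$ if the intended reading of $\beta$ is $\beta_n$ is harmless.

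So $\langle \NN_m\rangle_m$ is a sequence of $\beta$-models with $\NN_m \ni \NN_{m+1}$ for all $m$, contradicting Theorem~\ref{well-foundedness of hyperdegrees}. It follows that there is no infinite $R$-chain, which is the formulation of converse well-foundedness appropriate in $\ACAo$.

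The main obstacle is the transfer from ``$\M$ thinks $\NN_m$ is a $\beta$-model'' to ``$\NN_m$ is a $\beta$-model''. This is exactly what the earlier lemma supplies, and it needs the uniformity in $m$, which holds since that lemma is proved uniformly in $\ACAo$. A secondary point is the precise form of converse well-foundedness. If it is phrased as ``every nonempty subset of $W$ has an $R$-maximal element'', one derives it from the no-infinite-chain form by building a chain with arithmetic comprehension: from a subset without a maximal element, define $i_{m+1}$ as the least $j$ in the subset with $i_m R j$.
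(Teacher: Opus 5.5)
\textbf{Converse well-foundedness.} This part is correct. It is exactly what the paper's one-line appeal to Theorem~\ref{well-foundedness of hyperdegrees} leaves implicit. From an infinite $R$-chain you form $\langle \M_{i_m}\rangle_m$. The transfer lemma (read, as you rightly do, with the conclusion about the \emph{inner} model; the paper's statement has a typo there) turns $\M\models\beta(\M_{i_m})$ into genuine $\beta$-ness uniformly in $m$. The theorem then gives the contradiction.

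\textbf{Transitivity.} This step is wrong as written. You claim that the $a$-th column of the $b$-th column of $\M_i$ is the $\langle b,a\rangle$-th column of $\M_i$, and this is false. With $(X)_c=\{x:\langle c,x\rangle\in X\}$, the former is $\{x:\langle b,\langle a,x\rangle\rangle\in\M_i\}$ and the latter is $\{x:\langle\langle b,a\rangle,x\rangle\in\M_i\}$, and these are unrelated. No injective pairing can satisfy $\langle b,\langle a,x\rangle\rangle=\langle\langle b,a\rangle,x\rangle$ identically.

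In fact $\ni$ is not transitive on arbitrary coded $\omega$-models. Suppose every column of $N$ is $A=\{0\}$ and every column of $X$ is $N$. Then $A\in N\in X$, but $A\notin X$, because the only set coded in $X$ is $N$ and $N\neq A$. So transitivity cannot be ``purely syntactic about codes''; it must use the hypothesis $i\in W$.

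\textbf{Repair.} Since $\M_i\in\M$ and $\M\models\beta(\M_i)$, the same transfer lemma you use later shows that $\M_i$ is a genuine $\beta$-model. Hence it is an $\omega$-model of $\ACAo$, and in particular closed under $\leq_{\T}$. As $\M_k$ is a column of $\M_j$, we get $\M_k\leq_{\T}\M_j\in\M_i$, so $\M_k\in\M_i$.
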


As in \cite[Theorem 4.2]{LutzWalsh}, the well-foundedness of hyperdegrees derives a variant of semantic incompleteness theorem. We point out that \cite[Theorem 4.2]{LutzWalsh} can be formalized as follows.
\begin{corollary}[$\ACA_0^+$]
   Let $n > 0$ and $T$ be a theory. If $T$ has a $\beta_n$-model, then there is a $\beta_n$-model $\M$ of $T$ such that no $X \in \M$ is a $\beta_n$-model of $T$.
\end{corollary}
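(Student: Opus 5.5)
We derive the statement from the well-foundedness of coded $\beta$-models (Theorem \ref{well-foundedness of hyperdegrees}), adapted to $\beta_n$-models, using $\ACA_0^+$ to guarantee that a suitable $\beta$-model exists. The idea is to pick, among the $\beta_n$-models of $T$, one that is $\in$-minimal in a well-founded tree. A minimal one contains no $\beta_n$-model of $T$ as an element.

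\textbf{Setup.} Work in $\ACA_0^+$ and let $\mathcal{A}$ be a $\beta_n$-model of $T$. Note that $T$ is an arbitrary set of sentences, given as a set parameter, and that ``$\M \models T$'' is arithmetical in $\M$ and $T$.

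\textbf{Step 1: a $\beta$-model containing everything.} The first step is to obtain a coded $\beta$-model $\M^*$ with $\mathcal{A}, T \in \M^*$. Since $\ACA_0^+$ does not prove $\Pi^1_1$-comprehension, a full $\beta$-model may not exist. I would therefore replace ``$\beta$-model'' everywhere by ``$\beta_n$-model'' and use the following fact: the relevant notion $\M^* \models \beta_n(\NN)$ is arithmetical relative to the $\omega$-jump of $\M^*$.

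\textbf{Step 2: a well-founded descending search.} Consider the tree of finite sequences $\M_0 \ni \M_1 \ni \cdots \ni \M_k$ with $\M_0 = \mathcal{A}$. Each $\M_{i+1} \in \M_i$ is required to be a $\beta_n$-model of $T$ from $\M_i$'s point of view, and then actually so by the earlier lemmas on $\beta_n$-submodels and on $\M' \models [\M \models T]$. The latter lemma is the one whose proof only uses that the formula ``$\M\models T$'' is arithmetical, so it works for an arbitrary set $T$.

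This tree is arithmetical in $(\mathcal{A}\oplus T)^{(\omega)}$, which exists in $\ACA_0^+$. I would then adapt the proof of Theorem \ref{well-foundedness of hyperdegrees} to $\beta_n$-models. If the tree had an infinite path, the statement ``there is a descending sequence of $\beta_n$-models of $T$ below $\M_1$'' would be $\Sigma^1_1$ with parameters in $\M_0$. Because $n>0$ and $\M_0$ is a $\beta_n$-model, it would reflect into $\M_0$. Repeating the self-consistency argument from that proof, now with $\ACA_0^+$ available in place of $\ACAo$, yields a contradiction.

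\textbf{Step 3: choosing a minimal node.} Since every node has only the ``children'' listed in its model, I would avoid needing a leaf of a well-founded tree in general, which would require $\Pi^1_1$-$\mathrm{CA}$. Instead I argue by contradiction. Suppose every $\beta_n$-model of $T$ has an element that is a $\beta_n$-model of $T$. Then the set
\[
\{(\M,\NN) : \NN \in \M,\ \NN \text{ is a } \beta_n\text{-model of } T\}
\]
restricted to models enumerated within $\mathcal{A}$'s hereditary components is arithmetical in $(\mathcal{A}\oplus T)^{(\omega)}$. Using the enumeration of $\mathcal{A}$ and the arithmetical choice in $\ACA_0^+$, we can build a descending sequence $\mathcal{A}=\M_0 \ni \M_1 \ni \cdots$ by always picking the least index. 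This contradicts Step 2.

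\textbf{Main obstacle.} The crux is to certify that ``$\NN$ is a $\beta_n$-model'' is decidable from $(\mathcal{A}\oplus T)^{(\omega)}$ for $\NN \in \mathcal{A}$. By the lemma on $\beta_n$-submodels, this is equivalent to $\mathcal{A} \models \beta_n(\NN)$, which is arithmetical in $\mathcal{A}$. That equivalence is exactly where $n>0$ and $\mathcal{A}$ being a $\beta_n$-model are used.
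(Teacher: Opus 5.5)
Your Step 3 together with the ``main obstacle'' paragraph is exactly the paper's proof: assume every $\beta_n$-model of $T$ has an element that is one, use that for $\NN\in\mathcal{A}$ being a $\beta_n$-model is equivalent to $\mathcal{A}\models\beta_n(\NN)$, and take least indices to get a descending chain arithmetical in $(\mathcal{A}\oplus T)^{(\omega)}$, hence available in $\ACA_0^+$. (The lemma gives only one direction of that equivalence; the other holds because being a $\beta_n$-model is a $\Pi^1_n$ property of $\NN$ and so reflects into $\mathcal{A}$.) Steps 1--2 should simply be dropped: for $n>0$ every $\beta_n$-model is already a $\beta$-model, so this chain contradicts Theorem~\ref{well-foundedness of hyperdegrees} directly, and that, not the equivalence, is where $n>0$ is needed. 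Your ``adaptation'' is also wrong as stated: ``there is a descending sequence of $\beta_n$-models of $T$ below $\M_1$'' is $\Sigma^1_{n+1}$ rather than $\Sigma^1_1$, and $T$ need not belong to $\M_0$, nor to $\M_i$, so $\M_i\models[\M_{i+1}\models T]$ is not even meaningful; the paper's proof of that theorem works only with the internal predicate $\M_1\models\beta(\NN_i)$ and never mentions $T$.
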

\begin{proof}
  Assume $T$ has a $\beta_n$-model $\M_0$ but $\forall \M(\M \models_{\beta_n} T \to \exists \M' \in \M( \M' \models_{\beta_n} T))$. 
  Then, the latter assumption can be rewritten as  $\forall \M(\M \models_{\beta_n} T \to \exists \M' \in \M(\M' \models T \land \M \models \beta(\M')))$.
  Thus we can find a sequence $\langle M_{i+1} \rangle_i \leq_{\a} \M^{(\omega)} \oplus T$ so that $\forall i(\M_0 \models \beta(\M_{i+1})) \land \forall i(\M_0 \models [\M_{i+2} \in \M_{i+1}])$. This contradicts Theorem \ref{well-foundedness of hyperdegrees}.
\end{proof}

We note that $\M^{(\omega)}$ is needed to decide whether $\M_i$ is a model of $T$. If $T$ is definable as a subclass of $\M$, then $\M' \models T$ can be seen as a property in $\M$. So in this case, $\ACA_0'$ is sufficient as the base theory. If $n$ is standard in addition, then $\ACAo$ is sufficient.

We next give an application of the $\beta_n$ incompleteness theorem in the context of computability theory.
In the context of reverse mathematics, it is usual to separate two theories by their consistency strength. In particular, we sometimes use the following argument to separate two theories $T$ and $S$: $T$ proves the existence of an $\omega$-model $S$. Then $T$ proves the consistency of $S$, and hence $S$ does not prove $T$. We show that we can translate this argument into computability theoretic reductions.

\begin{theorem}
  Let $T = \langle \P_i: \theta_i \to \exists Y \eta_i \rangle_{i \in \omega}$ be a computable enumeration of problems, and $\Q: \theta_Q \to \exists Y \eta_Q$ be an arithmetically definable problem such that each $\P_i$ and $\Q$ are true in the intended model. Here, $\Q$ is arithmetically definable means that both of $\theta_{Q}$ and $\eta_Q$ are arithmetical.
  Assume there are sets $A$ and $B \leq_{\a} A$ such that 
  \begin{itemize}
    \item $B \in \dom \Q$
    \item for any $Y \in \Q(B)$, there exists $\M \leq_{\a} A \oplus Y$ such that $A \in \M$ and $\M \models T$.
  \end{itemize}
  Then, $\Q \not \leq^{\a}_{\omega} T = \{\P_i : i \in \omega\}$ and hence $\Q \not \leq_{\W}^{\a} \bigsqcup T$. Here, $\bigsqcup T$ is the following problem:
  \begin{description}
    \item [Input] A number $i$ and an input $X$ for $\P_i$.
    \item [Output] An output for $\P_i(X)$.
  \end{description}
\end{theorem}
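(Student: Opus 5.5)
We argue by contradiction. Suppose $\Q \leq^{\a}_{\omega} T$. Unwinding the definition of arithmetical $\omega$-model reducibility, this means: every $\omega$-model $\M$ (closed under arithmetical reducibility) that satisfies every $\P_i$ also satisfies $\Q$. In particular, whenever $\NN$ is such a model and $X \in \NN \cap \dom\Q$, some $Y \in \NN$ satisfies $\eta_Q(X,Y)$.

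The strategy is to turn the hypothesis on $A,B$ into an infinite descending $\in$-chain of models of $T$, and then contradict a well-foundedness or incompleteness result from this section. The natural candidate is the $\omega$-model ($\beta_0$) version of Theorem \ref{well-foundedness of hyperdegrees}, which is essentially Steel's well-foundedness of Turing jumps. Alternatively, one can use the $\omega$-model case of the semantic incompleteness corollary applied to the theory $T^{+} = \mathsf{ACA}_0 + \{\P_i\} + $ ``$A$ is a constant''. Since $T$ is computably enumerated, $T^{+}$ is $\Sigma^0_1$-definable, and its truth in a coded $\omega$-model is absolute. So the corollary for $A(\Pi^1_0)$-definable theories applies.

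The construction runs as follows.
\begin{enumerate}
\item Start with any arithmetically closed $\omega$-model $\NN$ of $T^{+}$ containing $A$; the full model $(\omega,2^\omega)$ restricted appropriately, or $\M$ built from the hypothesis, will do.
\item Since $B \leq_{\a} A$, we have $B \in \NN$ and $B \in \dom \Q$.
\item The reduction hypothesis then gives $Y \in \NN$ with $Y \in \Q(B)$. Because $\eta_Q$ is arithmetical, this membership is absolute between $\NN$ and the real world.
\item The second bullet of the hypothesis yields $\M \leq_{\a} A \oplus Y$ with $A \in \M$ and $\M \models T$. Since $\NN$ is arithmetically closed and contains $A \oplus Y$, we get $\M \in \NN$.
\end{enumerate}

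Hence every arithmetically closed $\omega$-model of $T$ containing $A$ contains, as an element, another such model; note that $\M$ can be taken arithmetically closed, e.g.\ by taking its closure. Iterating produces a descending sequence $\NN_0 \ni \NN_1 \ni \cdots$ of $\omega$-models of $T^{+}$. This contradicts the corollary above: there is a model $\NN$ of $T^{+}$ such that no $\M'' \in \NN$ is a model of $T^{+}$. More simply, take that minimal $\NN$ directly and apply steps (1)–(4) once; this avoids building the sequence and having to control its complexity.

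The main obstacle is matching the notion of model. The incompleteness corollary is phrased for $\beta_n$-models of theories extending $\ACAo$, while $\leq^{\a}_\omega$ concerns arithmetically closed $\omega$-models, i.e.\ $\beta_0$-models of $\ACAo$. I would take $n=0$ and note that ``$\M \models T$ and $A\in\M$'' is arithmetical in the code of $\M$, because $T$ is computable. This makes ``no coded $\beta_0$-model of $T^{+}$'' absolute for $\NN$, as in the remark preceding that corollary.

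Finally, the conclusion $\Q \not\leq^{\a}_{\W} \bigsqcup T$ follows at once. An arithmetical Weihrauch reduction to $\bigsqcup T$ yields an arithmetical $\omega$-model reduction to $\{\P_i\}$, since any arithmetically closed $\omega$-model satisfying every $\P_i$ is closed under $\bigsqcup T$-instances and the arithmetical pre- and post-processing.
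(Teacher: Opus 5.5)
Your direct version (take the minimal coded $\omega$-model $\NN$ of $T^{+}$ given by the $\omega$-model incompleteness corollary, note $B\in\NN$, get $Y\in\Q(B)\cap\NN$ from the reduction, and observe that the hypothesised $\M\leq_{\a}A\oplus Y$ lies in $\NN$) is exactly the paper's proof, and you even supply the initial model of $T^{+}$ (from $\Q(B)\neq\varnothing$) that the paper leaves implicit. Drop the detours, though: the $\beta_0$ analogue of Theorem~\ref{well-foundedness of hyperdegrees} is false, since Harrison pseudo-hierarchies give infinite $\in$-descending sequences of coded $\omega$-models of $\ACAo$, so a descending chain alone contradicts nothing; and the arithmetical closure of a model of $T$ need not still satisfy $T$, so read the hypothesis as the paper does, as directly giving $\M\models T+\ACAo$.
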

\begin{proof}
  For the sake of contradiction, assume $\Q \leq^{\a}_{\omega} T$.
  Let $A$ and $A_{\Q}$ be as assumed.
  Take an $\omega$-model $\M$ of  $T + \ACAo$ such that $A \in \M$ and no $\M' \in \M$ is an $\omega$-model of $T + \ACAo$ containing $A$.

  We note that $B \in \M$ and $\M \models B \in \dom \Q$. Thus, there exists $Y \in Q(B) \cap \M$.
  For this $Y$, we can find an $\M' \leq_{\a} A \oplus Y$ such that $A \in \M'$ and $\M' \models T + \ACAo$. Then $\M' \in \M$ because $A, Y \in \M$. However, this is a contradiction.
\end{proof}

\begin{remark}
  The above argument is used in \cite{SuzukiYokoyama}.
\end{remark}

  \bibliographystyle{plain}
\bibliography{references}

\end{document}